\numberwithin{equation}{section} \textwidth=15.5cm
\newtheorem{theorem}{Theorem}[section]
\newtheorem{corollary}[theorem]{Corollary}
\newtheorem{lemma}[theorem]{Lemma}
\newtheorem{problem}[theorem]{Problem}
\newtheorem{proposition}[theorem]{Proposition}
\theoremstyle{definition}
\newtheorem{remark}[theorem]{Remark}
\def\J#1#2#3{ \left\{ #1,#2,#3 \right\} }
\def\11{\textbf{$1$}}
\begin{document}
\numberwithin{equation}{section}

\title[boundedness of measures and 2-local triple derivations]{Boundedness of completely additive measures with application to %on continuous JW*-algebras,
 2-local triple derivations}

\author[Hamhalter]{Jan Hamhalter}
\email{hamhalte@math.feld.cvut.cz}
\address{Czech Technical University, Faculty of Electrical Engineering, Technicka 2, 166 27, Prague 6,
Czech Republic}

\author[Kudaybergenov]{Karimbergen Kudaybergenov}
\email{karim2006@mail.ru}
\address{Ch. Abdirov 1, Department of Mathematics, Karakalpak State University, Nukus 230113, Uzbekistan}

\author[Peralta]{Antonio M. Peralta}
\email{aperalta@ugr.es}
\address{Departamento de An{\'a}lisis Matem{\'a}tico, Facultad de
Ciencias, Universidad de Granada, 18071 Granada, Spain.}

\author[Russo]{Bernard Russo}
\email{brusso@uci.edu}
\address{Department of Mathematics, UC Irvine, Irvine CA, USA}

\thanks{First named author  was supported by  the ``Grant Agency of the Czech
Republic" grant number P201/12/0290, ``Topological and geometrical
properties of Banach spaces and operator algebras".
Third named author was partially supported by the Spanish Ministry of Economy and
 Competitiveness project no. MTM2014-58984-P and Junta de Andaluc\'{\i}a
 grant FQM375}

\subjclass[2011]{Primary 46L70; 46L57; 46L40} %; Secondary hsecondary classesi}

\keywords{triple derivation; 2-local triple derivation, Gleason theorem, continuous JBW$^*$-triple}

\date{}

\begin{abstract} We prove a Jordan version of  Dorofeev's boundedness theorem for completely additive measures  and use it to show that every {\rm(}not necessarily linear nor continuous{\rm)} 2-local triple derivation on a continuous JBW$^*$-triple is a triple derivation.\end{abstract}

\maketitle

\maketitle
\thispagestyle{empty}
%\tableofcontents

% \section{Introduction: Orthogonality preservers, $M$- and $L$-norms}
\section{Introduction and Background}\label{sec:intro}

%\subsection{Introduction}
%\subsection{Measures on the projections of a JBW$^*$-algebra}
Let $\mathcal{P} (M)$ denote the lattice of projections in a von
Neumann algebra $M.$
%The following result is implicitly applied in
%\cite{AyuKuday2014}.
Let $X$ be a Banach space. A mapping $\mu:
\mathcal{P} (M)\to X$ is said to be \emph{finitely additive} when
\begin{equation}\label{2.1}
\mu \left(\sum\limits_{i=1}^n p_i\right) = \sum\limits_{i=1}^{n}
\mu (p_i),
\end{equation}
for every family $p_1,\ldots, p_n$ of mutually orthogonal
projections in $M.$ A mapping $\mu: \mathcal{P} (M)\to X$ is said to be \emph{bounded} when the set
$$
\left\{ \|\mu (p)\|: p \in \mathcal{P} (M) \right\}
$$ is bounded.\smallskip

The celebrated Bunce-Wright-Mackey-Gleason theorem (\cite{BuWri92}, \cite{BuWri94}) states that if $M$ has no summand of type $I_2$, then every bounded finitely additive mapping $\mu: \mathcal{P} (M)\to X$ extends to a bounded linear operator from $M$ to $X$.\smallskip

Answering a question posed by George Mackey,  Gleason's original  theorem \cite{Gleason57}  characterizes quantum mechanical states on a separable Hilbert space in terms of density operators, and thus plays an important role in the foundations of quantum mechanics. The interdisciplinary nature of the Bunce-Wright-Mackey-Gleason theorem makes this result very useful in a wide range of  topics. Applications can be found in quantum physics and quantum information (cf. \cite{Dvu1993}, \cite{Wright98}, \cite{RuWrigh00}, \cite{Molnar2001}, \cite[Chapter 7]{Ham03}, and \cite{CooJunNavPerVill2013}, among many others), and in  functional analysis with studies on vector-valued measures on von Neumann algebras and 2-local maps on von Neumann algebras, JBW$^*$-algebras and JBW$^*$-triples (see \cite{EdRu99}, \cite{AyuKudPos}, \cite{AyuKudPer}, \cite{BuFerGarPe2015RACSAM} \cite{BuFerGarPe2015JMAA} and \cite{KOPR2014}).
\smallskip

According to the terminology employed in \cite{Shers2008} and \cite{Doro}, a completely
additive mapping $\mu : \mathcal{P} (M)\to \mathbb{C}$---that is, (\ref{2.1}) holds with $X=\mathbb{C}$ for an arbitrary set of mutually orthogonal projections,  is called a \emph{charge}.
The  Dorofeev--Sherstnev
 theorem (\cite[Theorem 29.5]{Shers2008} or \cite[Theorem 2]{Doro})  states that any charge on a  von Neumann algebra with no summands of type $I_n$ is bounded.\smallskip

The  Dorofeev-Shertsnev theorem was used in \cite{KOPR2014} in order   to apply the Bunce-Wright-Mackey-Gleason theorem to prove the main result of that paper, namely, that a 2-local triple derivation on a von Neumann algebra is a triple derivation (see the next subsection).  In section~\ref{susect: Dorofeev} of this paper, we shall establish the first main result of this paper, namely,  a Jordan version  of Dorofeev's boundedness theorem (Theorem~\ref{t Jordan Dorofeev-Sherstnev}).   This will be used in section~\ref{section5} to  show that 2-local triple derivations on certain continuous JW$^*$-algebras are triple derivations (Theorem~\ref{t continuous type 2}).  Combined with the main result of section~\ref{section3} (Theorem~\ref{prop}), this will prove the second main result of this paper, namely,  that every 2-local triple derivation on an arbitrary continuous JBW$^*$-triple is a triple derivation (Theorem~\ref{continuous}). \smallskip

%\subsection{Background}

Having described the contents and potential impact of this paper, we shall now present more background and some preliminary material.  \smallskip

 We shall use the term \emph{measure} to denote a complex valued finitely additive function $\mu$ on the projections of a von Neumann algebra or a JBW$^*$-algebra.    If $\mu$ is positive  (resp. real) valued, we call it a positive (resp. signed) measure.
If countable additivity or complete additivity is assumed, it will be explicitly stated.\smallskip

Let us recall that a \emph{derivation} is a linear map $D$  from an algebra $A$  to a two sided $A$-module $M$ over the algebra satisfying the Leibniz identity:   $D(ab)=a\cdot D(b)+D(a)\cdot b$ for all $a,b\in A$.
\smallskip

%\subsection{2-local triple derivations}
Local derivations were introduced simultaneously in 1990 by Kadison \cite{Kad90} and by Larson-Sourour \cite{LarSou}.  A \emph{local derivation} from an algebra into a module is a linear mapping whose value at each point in the algebra coincides with the value of some derivation at that point.   Kadison proved that every continuous local derivation of a von Neumann algebra into a dual Banach module is in fact a derivation.  Johnson \cite{John01} extended Kadison's result to C$^*$-algebras, and moreover showed that the continuity assumption was not necessary.  Larson and Sourour showed that a local derivation on the algebra of all bounded linear operators on a  Banach space is a derivation. \smallskip

Let us also recall that a \emph{triple derivation} is a linear map $D$  from a triple system $E$  to an  $E$-module $N$ over the triple system satisfying the triple Leibniz identity:   $D(\{abc\})=\{D(a)bc\}+\{a D(b) c\}+\{abD(c)\}$ for all $a,b,c\in E$, where $\{abc\}$ denotes the triple product. (Jordan triple systems are defined later in this section.)
\smallskip

Local triple derivations were introduced in 2013 by Michael Mackey \cite{Mack}.
A \emph{local triple derivation} on a triple system  is a linear mapping whose value at each point in the triple system coincides with the value of some triple derivation at that point.
Mackey showed that a continuous local triple derivation on a JBW$^*$-triple (to itself) is a triple derivation, an exact analog of Kadison's result mentioned above. This result was extended to JB$^*$-triples in 2014 by Burgos, Fernandez-Polo, and Peralta \cite{BurPolPerBLMS14}, who also showed that the continuity assumption was redundant, an exact analog of Johnson's result also mentioned above.
\smallskip

Since 1997 there has been much interest in the notion of 2-local derivation  and more recently, in the notion of 2-local triple derivation.  The application of the main theorem of this paper concerns 2-local triple derivations. A \emph{2-local derivation} (respectively, \emph{2-local triple derivation}) from an algebra (respectively, triple system) into itself is a  mapping (not necessarily linear) whose values at each pair of points in the algebra (respectively, triple system) coincides with the values of some derivation (respectively, triple derivation) at those  two points. 2-local derivations were introduced in 1997 by Semrl \cite{Semrl97} and 2-local triple derivations were introduced in 2014 by Kudaybergenov, Oikhberg, Peralta, and Russo \cite{KOPR2014} although the concept was mentioned by Michael Mackey in a lecture  in 2012  at a conference in Hong Kong celebrating the 65th birthday of Cho-Ho Chu.
 It is now known that, for von Neumann algebras, a 2-local derivation  is in fact a derivation (Ayupov-Kudaybergenov  \cite{AyuKudPos}) and, as noted above,  a 2-local triple derivation  is a triple derivation (Kudaybergenov-Oikhberg-Peralta-Russo \cite{KOPR2014}).\smallskip

For an elaboration of the above summary, see the forthcoming survey of Ayupov, Kudaybergenov, and Peralta, \cite{AyuKudPer}.
Local and 2-local derivations have also been considered on algebras of measurable operators associated with von Neumann algebras. For more details on this, see the forthcoming survey of Ayupov and Kudaybergenov \cite{AyuKud}.\smallskip

%\subsection{Continuous JBW$^*$-triples}

A complex  \emph{Jordan triple} is a complex
vector space $E$ equipped with a non-trivial  triple
product $$ E \times E \times E \rightarrow E$$
$$(x,y,z) \mapsto \J xyz $$
which is bilinear and symmetric in the outer variables and
conjugate linear in the middle one satisfying the
so-called \emph{``Jordan Identity''}:
$$L(a,b) L(x,y) -  L(x,y) L(a,b) = L(L(a,b)x,y) - L(x,L(b,a)y),$$
for all $a,b,x,y$ in $E$, where $L(x,y) z := \J xyz$. \smallskip

A subspace $F$ of a Jordan triple $E$ is said to be a
\emph{subtriple} if $\J FFF \subseteq F$ and an \emph{ideal} if
$\{E,E,J\}+\{E,J,E\} \subseteq J.$ \smallskip

A \emph{(complex) JB$^*$-triple} is a complex Jordan Banach triple
${E}$ satisfying the following axioms: \begin{itemize}
 \item For each $a$ in ${E}$ the map $L(a,a)$ is an hermitian
operator on $E$ with non negative spectrum; \item  $\left\|
\{a,a,a\}\right\| =\left\| a\right\| ^3$ for all $a$ in ${A}.$
\end{itemize}\smallskip

A JB$^*$-algebra is a complex Jordan Banach algebra ($A,\circ$) equipped
with an algebra involution $^*$ satisfying  $\|\J a{a^*}a \|= \|a\|^3$, $a\in
A$.  (Recall that $\J a{a^*}a  =
 2 (a\circ a^*) \circ a - a^2 \circ a^*$.)     JB-algebras are precisely the self adjoint parts
of JB$^*$-algebras, and a JBW-algebra is a JB-algebra which is
a dual space.\smallskip

Every C$^*$-algebra (resp., every JB$^*$-algebra) is a JB$^*$-triple with respect to the product
$\J abc = \frac12 \ ( a b^* c + cb^* a) $ (resp., $\J abc := (a\circ b^*) \circ c + (c\circ b^*) \circ a - (a\circ c) \circ b^*$).\smallskip

For the theory of C$^*$-algebras and von Neumann algebras, we shall refer to the monographs \cite{KR86} and \cite{Tak}.  For the theory of JB$^*$-algebras and JBW$^*$-algebras we refer to \cite{HancheStor} and \cite{To}. For basic facts about abstract Jordan triple systems, consult \cite[section 1.2]{chu}.  However, the Jordan triple systems we consider in this paper are concrete, so statements about them can usually be verified directly. For example, a tripotent (defined in the next section) is nothing but a partial isometry. \smallskip

A complex \emph{JBW$^*$-triple} is a complex JB$^*$-triple which is also a dual Banach space.
The structure of JBW$^*$-triples is fairly well understood.  Every JBW$^*$-triple is a direct sum of a JBW$^*$-triple of type I and a continuous JBW$^*$-triple (defined below). JBW$^*$-triples of type I have been classified in \cite{Horn87} and continuous JBW$^*$-triples have been classified in \cite{Horn88}.
Since it is the continuous JBW$^*$-triples that concern us here, we shall not define type I, but we shall state their classification theorem from \cite{Horn87}:  A JBW$^*$-triple of type I is an $\ell^\infty$-direct sum of JBW$^*$-triples of the form $A\otimes C$, where $A$ is a commutative von Neumann algebra and $C$ is a Cartan factor (for Cartan factors, see \cite[Theorem 2.5.9 and page 168]{chu}).\smallskip

A $JBW^\ast$-triple $\mathcal{A}$ is said to be
\textit{continuous} if it has no type I direct summand. In this
case it is known that, up to isometry, $\mathcal{A}$ is a $JW^\ast$-triple, that is, a subspace of the bounded operators on a Hilbert space which is closed under the triple product $xy^*z+zy^*x$ and closed in the weak operator topology. More importantly,  it has a
unique decomposition into weak$^*$-closed ideals, $\mathcal{A} = H(W,\alpha)\oplus pV,$ where $W$
and $V$ are continuous von Neumann algebras, $p$ is a projection
in $V$, $\alpha$ is an  involution on $W$ commuting with $^*$, that is,  a $^*$-antiautomorphism of $W$ order 2, which we shall call henceforth a $\mathbb{C}$-linear
$^*$-involution,  and
$H(W,\alpha)=\{x\in W: \alpha(x)=x\}$ (see \cite[(1.20)]{Horn88}).  Notice that the triple product in $pV$ is given by $(xy^*z+zy^*x)/2$ and that $H(W,\alpha)$ is a JBW$^*$-algebra with the Jordan product $x\circ y=(xy+yx)/2$.\smallskip

 We shall show in section~\ref{section5} that for continuous JBW$^*$-triples, every 2-local triple derivation is a derivation.  (We are leaving  the study of 2-local triple derivations on the JBW$^*$-triples of type I  as one of the problems at the end of this  paper---see Problem~\ref{5.9}(a).)

\section{2-local triple derivations on right ideals of von Neumann algebras}\label{section3}
Recall that a (not necessarily linear) mapping $\Delta$ on a Jordan triple $E$ is said to be a 2-local triple derivation if, given two points $x,y\in E$, there is a triple derivation $D_{x,y}$ on $E$ such that $\Delta(x)=D_{x,y}(x)$ and $\Delta(y)=D_{x,y}(y)$.
Every 2-local triple derivation $\Delta: E\to E$ is homogeneous. Indeed, for each $a\in E$, $t\in \mathbb{C}$ consider a triple derivation $D_{a,ta}$.  Then   $\Delta(t a) = D_{a,ta} (ta) = t D_{a,ta} (a) = t \Delta(a)$.\smallskip

An element $e$ in a Jordan triple $E$ is called a \emph{tripotent}
if $\J eee =e$. Each tripotent $e$ in $E$ induces a
decomposition of $E$ (called \emph{Peirce decomposition}) in the
form: $$E=E_0(e)\oplus E_1(e)\oplus E_2(e),$$ where
$E_k(e)=\{x\in E:L(e,e)x=\frac k2 x\}$ for $k=0,1,2$ (compare \cite[page 32]{chu}). \smallskip

\begin{lemma}\label{zerotrip}
Let $\Delta : \mathcal{A} \to \mathcal{A}$ be a 2-local triple
derivation on a JB$^*$-triple. Suppose $v$ is a tripotent in
$\mathcal{A}$ such that $\Delta(v)=0$. Then
$\Delta(\mathcal{A}_k(v)) \subseteq \mathcal{A}_k(v)$, for every
$k=0,1,2$.
\end{lemma}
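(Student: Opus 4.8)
The plan is to exploit the fact that a triple derivation $D$ fixing a tripotent $v$ (i.e. $D(v)=0$) must preserve each Peirce subspace $\mathcal{A}_k(v)$, together with the pointwise nature of the $2$-local condition. First I would record the standard fact that if $D$ is a triple derivation on $\mathcal{A}$ with $D(v)=0$, then $D(\mathcal{A}_k(v))\subseteq \mathcal{A}_k(v)$ for $k=0,1,2$. This is immediate from differentiating the identity $L(v,v)x = \tfrac{k}{2}\,x$ for $x\in \mathcal{A}_k(v)$: applying the triple Leibniz rule to $\{v,v,x\}$ gives
\[
D(\{v,v,x\}) = \{D(v),v,x\} + \{v,D(v),x\} + \{v,v,D(x)\} = \{v,v,D(x)\} = L(v,v)D(x),
\]
while the left-hand side equals $D(\tfrac{k}{2}x) = \tfrac{k}{2}D(x)$; hence $L(v,v)D(x) = \tfrac{k}{2}D(x)$, i.e. $D(x)\in \mathcal{A}_k(v)$.

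Next I would combine this with the $2$-local hypothesis. Fix $k$ and fix $x\in \mathcal{A}_k(v)$. Apply the definition of $2$-local triple derivation to the pair $\{v,x\}$: there is a triple derivation $D_{v,x}$ on $\mathcal{A}$ with $D_{v,x}(v) = \Delta(v) = 0$ and $D_{v,x}(x) = \Delta(x)$. By the previous paragraph, since $D_{v,x}(v)=0$ we get $D_{v,x}(\mathcal{A}_k(v))\subseteq \mathcal{A}_k(v)$, and in particular $\Delta(x) = D_{v,x}(x)\in \mathcal{A}_k(v)$. Since $x\in \mathcal{A}_k(v)$ was arbitrary, $\Delta(\mathcal{A}_k(v))\subseteq \mathcal{A}_k(v)$, and since $k$ was arbitrary the lemma follows.

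I do not anticipate a serious obstacle here: the only slightly delicate point is the first step, and even there one should be careful to justify that a triple derivation is automatically continuous (or at least that the computation above is purely algebraic and needs no continuity, which is the case — it uses only the triple Leibniz identity and the Peirce eigenvalue relation). One may alternatively invoke the known structural fact that $D(v)=0$ forces $D$ to leave invariant the joint Peirce decomposition of $v$; but the one-line differentiation argument above is self-contained and is the cleanest route. The homogeneity of $\Delta$ noted just before the lemma is not needed for this argument, though it is reassuring that $\Delta$ is at least well-behaved on scalar multiples.
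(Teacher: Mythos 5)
Your argument is correct and is essentially the same as the paper's: both apply the $2$-local hypothesis to the pair $(v,x)$ to get a triple derivation $D_{v,x}$ with $D_{v,x}(v)=\Delta(v)=0$ and $D_{v,x}(x)=\Delta(x)$, and then use the triple Leibniz identity on $\{v,v,x\}=\tfrac{k}{2}x$ to conclude that $\{v,v,\Delta(x)\}=\tfrac{k}{2}\Delta(x)$. The only difference is organizational — you first isolate the invariance statement for an arbitrary triple derivation annihilating $v$ and then specialize, whereas the paper performs the same computation directly with $D_{v,x}$ — and, as you note, the computation is purely algebraic, so no continuity issue arises.
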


\begin{proof} Let $x\in \mathcal{A}_k (v)$ with $k=0,1,2$,
that is, $\{v,v,x\} = \frac{k}{2} x$. Since
\begin{eqnarray*}
\{v,v,\Delta(x)\} & = & \{v,v,D_{v,x}(x)\} = D_{v,x}
\left(\{v,v,x\}\right) -
\{D_{v,x}(v),v,x\} -\{v,D_{v,x}(v),x\}=\\
& = & D_{v,x} \left(\frac{k}{2} x\right) - \{\Delta(v),v,x\}
-\{v,\Delta(v),x\} = \frac{k}{2} D_{v,x} ( x) = \frac{k}{2}
\Delta(x).
\end{eqnarray*}
The proof is complete.
\end{proof}

We recall the following result (see \cite[Theorem
2.14]{KOPR2014}).

\begin{theorem}\label{vNa} \cite{KOPR2014}
Let $M$ be  an arbitrary von Neumann algebra and let   $T: M\to M$
be a {\rm(}not necessarily linear nor continuous{\rm)} 2-local
triple derivation. Then $T$ is a triple derivation.
\end{theorem}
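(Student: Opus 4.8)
The plan is to reduce the theorem to the single assertion that $T$ is additive, and then to obtain additivity through the measure--theoretic machinery of the Bunce--Wright--Mackey--Gleason and Dorofeev--Sherstnev theorems. The reduction is painless. Since $T$ is homogeneous, once it is additive it is $\mathbb{C}$-linear; and for every $a\in M$, applying $2$-locality to the pair $(a,b)$ with $b=\{a,a,a\}$ produces, via the triple derivation $D_{a,b}$, the ``diagonal'' identity $T\{a,a,a\}=\{T(a),a,a\}+\{a,T(a),a\}+\{a,a,T(a)\}$, with no linearity of $T$ needed; a standard polarisation of this identity, using $\mathbb{C}$-linearity of $T$ together with the bilinearity and middle--conjugate--linearity of the triple product, then upgrades it to the full triple Leibniz identity. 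So everything rests on proving $T(x+y)=T(x)+T(y)$.

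First I would recall the explicit description of the triple derivations of a von Neumann algebra $M$: every triple derivation is automatically norm--continuous and has the form $D_{h,k}(x)=i(hx+xk)$ with $h,k$ self-adjoint elements of $M$ (this follows from Kaup's Banach--Stone theorem for JB$^{*}$-triples, the description of the surjective linear isometries of $M$, and the innerness of derivations of von Neumann algebras); in particular $T(x)$ always lies in $\{\,i(hx+xk):h=h^{*},\ k=k^{*}\in M\,\}$. Using this concrete form together with Lemma~\ref{zerotrip} and the Peirce calculus of projections, I would show that $T|_{\mathcal{P}(M)}$ is finitely additive on orthogonal projections and, by passing to weak$^{*}$ limits, completely additive; hence for every normal functional $\omega\in M_{*}$ the scalar function $p\mapsto\omega(T(p))$ is a charge on $\mathcal{P}(M)$. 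On the part of $M$ with no finite type $I$ summand, the Dorofeev--Sherstnev theorem makes each such charge bounded, so by the uniform boundedness principle $\{\|T(p)\|:p\in\mathcal{P}(M)\}$ is bounded, and the Bunce--Wright--Mackey--Gleason theorem then produces a bounded linear operator $D:M\to M$ with $D(p)=T(p)$ for all projections $p$; the finite type $I$ summands (in particular any type $I_{2}$ summand) are dealt with separately by a direct computation, after which we may assume $D$ is defined on all of $M$.

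Set $\Delta':=T-D$. For every pair $x,y$ one has $\Delta'(x)=(D_{x,y}-D)(x)$ and $\Delta'(y)=(D_{x,y}-D)(y)$ with $D_{x,y}-D$ a triple derivation, so $\Delta'$ is again a $2$-local triple derivation, and by construction $\Delta'(p)=0$ for every projection $p$. It remains to prove $\Delta'=0$. Here Lemma~\ref{zerotrip} applies with $v=p$, giving $\Delta'(M_{k}(p))\subseteq M_{k}(p)$ for every projection $p$, where $M_{2}(p)=pMp$, $M_{0}(p)=(1-p)M(1-p)$ and $M_{1}(p)=pM(1-p)\oplus(1-p)Mp$. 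Running this over the spectral projections of a self-adjoint $x$ forces $\Delta'(x)$ to have no off-diagonal corner relative to any spectral projection of $x$, hence to commute with $x$; a further round of $2$-local comparisons, using the derivations $D_{x,s}$ for symmetries $s=1-2p$ and the explicit inner form $D_{h,k}$ to locate $D_{x,s}(x)$ inside successively smaller Peirce corners, then collapses $\Delta'(x)$ to $0$, first for self-adjoint $x$ and then, via $x=\operatorname{Re}x+i\operatorname{Im}x$ and homogeneity, for all $x\in M$. Thus $T=D$ is linear, and the polarisation of the first paragraph completes the proof.

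I expect the two delicate points to be: (i) establishing complete additivity of $p\mapsto T(p)$ from $2$-locality alone --- precisely where the lack of any a priori continuity of $T$ bites, and where the Dorofeev--Sherstnev theorem is indispensable, being the only available passage from additivity to boundedness of $\{\|T(p)\|\}$; and (ii) the rigidity step $\Delta'=0$, which needs a careful, iterated use of the Peirce decomposition and of the concrete form of the $D_{x,y}$, since $2$-locality controls $\Delta'$ at only two points at a time.
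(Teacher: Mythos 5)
The paper does not actually prove Theorem~\ref{vNa}: it is imported verbatim from \cite{KOPR2014}, so there is no internal proof to compare against. Your outline faithfully reconstructs the strategy of that cited proof --- complete additivity of $T$ on projections via the concrete form $x\mapsto ax+xb$ ($a,b$ skew) of triple derivations restricted to singly generated abelian subalgebras, boundedness via Dorofeev--Sherstnev (with the finite type I part treated separately), linearization via Bunce--Wright--Mackey--Gleason, and a final rigidity step killing $T-D$ --- which is also exactly the template the present paper adapts in Sections~\ref{section3}--\ref{section5} for $pM$ and $H(M,\beta)$.
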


Throughout this section
 $\mathcal{A}$ will denote the $JBW^\ast$-triple $pM$ where $M$ is a  von Neumann algebra and $p$ is a projection in $M$.
The following is the main result of this section. The proof will be carried out in the next subsections.

\begin{theorem}\label{triple-rectangular}
Let $M$ be a von Neumann algebra and let $p$ be  a projection in
$M.$  Then any 2-local triple derivation $\Delta$ on the
$JBW^\ast$-triple $\mathcal{A}=pM$ is  a triple derivation.
\end{theorem}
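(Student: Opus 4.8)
The plan is to reduce the statement about the rectangular JBW$^*$-triple $\mathcal{A}=pM$ to the known Theorem~\ref{vNa} about the full von Neumann algebra $M$. The natural device is the \emph{$2\times 2$ matrix trick}: embed $\mathcal{A}=pM$ as a corner of a larger von Neumann algebra. Concretely, one considers $N=M_2(M)$ (or the relevant $2\times 2$ matrix algebra) together with the tripotent $v=\begin{pmatrix}0&0\\ p&0\end{pmatrix}$, whose Peirce-$1$ or Peirce-$2$ space is triple-isomorphic to $pM$. Alternatively, and more in keeping with the Peirce-decomposition language already set up in the section, one works inside $M$ itself: the projection $p$ gives a Peirce decomposition of $M$ (as a JB$^*$-triple) with $M_1(p)\oplus M_2(p)=pM$, and $pM$ is an \emph{inner ideal} of $M$. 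So the first step is to record that $\mathcal{A}=pM$ sits inside $M$ as a weak$^*$-closed inner ideal, and that triple derivations of $M$ restrict/compress to triple derivations of $pM$.

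The heart of the argument is then to manufacture, from the given 2-local triple derivation $\Delta$ on $pM$, a 2-local triple derivation $\widetilde\Delta$ on all of $M$, apply Theorem~\ref{vNa} to conclude $\widetilde\Delta$ is a (linear, continuous) triple derivation, and then show $\Delta$ is the compression of $\widetilde\Delta$ to $pM$, hence itself a triple derivation. To build $\widetilde\Delta$ one would use Lemma~\ref{zerotrip}: after normalizing (replacing $\Delta$ by $\Delta$ minus an inner triple derivation so that $\Delta$ vanishes on the relevant tripotent, here essentially $p$ viewed as a tripotent of $M$), the lemma forces $\Delta$ to preserve the Peirce spaces $M_k(p)$, and in particular to map $pM=M_1(p)\oplus M_2(p)$ into itself, which is exactly the compatibility one needs. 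The key technical point is that any triple derivation $D_{x,y}$ on $pM$ extends to a triple derivation on $M$ — this uses the structure theory of triple derivations on von Neumann algebras (every triple derivation is of a standard/inner form, implemented by elements of the algebra), so that a triple derivation on the corner $pM$ is the restriction of one on $M$.

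The main obstacle I expect is precisely this extension/lifting step: going from a triple derivation on the inner ideal $pM$ to one on $M$, and checking that the two points $x,y\in pM\subseteq M$ at which we want to realize $\widetilde\Delta$ are handled by a single triple derivation of $M$. A triple derivation on $pM$ need not obviously come from $M$ in a canonical way, so one must either invoke the explicit description of triple derivations on $pM$ (as coming from a pair of operators, via the two-sided module structure $p M$, roughly $D(x)=ax-xb$ with suitable conditions) and extend those operators to $M$, or else pass through the $M_2(M)$ construction where the Peirce-$2$ space of the tripotent is a genuine von Neumann algebra corner. One then also has to verify linearity and continuity of $\Delta$ itself: once $\widetilde\Delta$ is known to be a bounded linear triple derivation on $M$ and $\Delta$ agrees with it pointwise on $pM$ (because for each $x$, $\Delta(x)=D_{x,x}(x)$ and $D_{x,x}$ is realized inside the $M$-picture), linearity and continuity of $\Delta$ on $pM$ follow, completing the proof. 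I would expect the bulk of the write-up in the subsequent subsections to be devoted exactly to (i) setting up the matrix/Peirce framework, (ii) the reduction to $\Delta(p)=0$ via subtracting an inner derivation, and (iii) the lifting of the local triple derivations $D_{x,y}$ from $pM$ to $M$.
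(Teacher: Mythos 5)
There is a genuine gap at the heart of your plan: neither of your two reduction devices actually produces a von Neumann algebra to which Theorem~\ref{vNa} applies. The rectangular triple $pM$ is triple-isomorphic to a von Neumann algebra only when $p$ is equivalent to $\mathbf{1}$ (so that $pM=uu^*Mu^*u$ carries the product $x\cdot_u y=xu^*y$); the $2\times 2$ matrix trick does not help, because the corner $eM_2(M)f$ with $e=\mathrm{diag}(0,p)$ and $f=\mathrm{diag}(\mathbf{1},0)$ is a von Neumann algebra corner only if $p\sim\mathbf{1}$ again. More seriously, the proposed extension of $\Delta$ to a 2-local triple derivation $\widetilde\Delta$ on all of $M$ is never constructed and cannot be: a 2-local triple derivation on $M$ must be defined at every point of $M$ and realized at every \emph{pair} of points by a single triple derivation of $M$; for $x\in pM$ and $y\notin pM$ you have no candidate value $\widetilde\Delta(y)$ and no way to produce a common $D_{x,y}$, and $\Delta$ is not known to be additive, so you cannot assemble $\widetilde\Delta$ from Peirce components. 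Lifting each individual $D_{x,y}$ from $pM$ to $M$ (which is indeed possible, since triple derivations of $pM$ have the form $x\mapsto ax+xb$ with $a,b$ skew-hermitian, modulo the reductions in the paper) does not yield a single globally defined 2-local map.

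The paper goes in the opposite direction. It first splits $p=z_1p+z_2p$ by central projections into a finite and a properly infinite part, using the $\mathcal{P}(Z(M))$-homogeneity of $\Delta$ (Lemma~\ref{module-homog}) to see that $\Delta$ respects this splitting. When $p$ is properly infinite, for each fixed pair $x,y$ one finds (via the halving lemma and comparison theory) a projection $q\sim p$ dominating the right supports of $x$, $y$, $x+y$, $\Delta(x)$, $\Delta(y)$, $\Delta(x+y)$; then $pMq$ \emph{is} a von Neumann algebra, $\Delta_{(q)}(z)=\Delta(z)q$ is a 2-local triple derivation on it, and Theorem~\ref{vNa} gives additivity at the pair $x,y$. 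This ``local'' choice of a von Neumann algebra depending on the pair is exactly what replaces your global extension, and it is only available because $p$ is properly infinite. When $p$ is finite this mechanism fails entirely, and the paper uses a completely different argument: the center-valued trace $\tau$ on $pMp$ defines a non-degenerate sesquilinear form $\langle x,y\rangle=\tau(xy^*)$ for which every triple derivation is skew-adjoint (Lemmas~\ref{skewone} and~\ref{ccc}, via the decomposition $D=D_1+D_2$ with $D_1=L_a+R_b$ and $D_2|_{pMp}\equiv 0$), and additivity of $\Delta$ follows by testing against arbitrary $z$. Your proposal does not anticipate the finite case at all, and that is where the real work lies.
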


Let $a$ and $b$ be skew-hermitian elements in $pMp$ and $M,$
respectively. Let $L_a$ and $R_b$ be the left and right
multiplication operators, i.e.
\begin{equation}\label{left}
L_a(x)=ax,\, x\in \mathcal{A}
\end{equation}
and
\begin{equation}\label{right}
R_b(x)=xb,\, x\in \mathcal{A}.
\end{equation}
It is clear that $L_a$ and $R_b$ both are triple derivations on $M$, and in particular on
$\mathcal{A}.$\smallskip

Let $u$ be a tripotent in the $JBW^\ast$-triple $\mathcal{A}=pM$,
%We
%shall frequently use the following $W^\ast$-structure in the
%Pierce-2-space $\mathcal{A}_2(u).$
and let $(\mathcal{A}_2(u),\cdot_{u},^{*_{u}})$ denote the von Neumann
algebra whose underlying Banach space is the Pierce-2-space $\mathcal{A}_2(u)=uu^\ast
Mu^\ast u$, and whose product and involution are given by
$x\cdot_{u} y = x u^\ast y$ and $x^{*_{u}}= u x^\ast u$,
respectively.\smallskip

Let $\{.,.,.\}_1$ denote the triple product
associated to $\mathcal{A}_2(u)$, i.e. $\{x,y,z\}_1 = \frac12 (x
\cdot_{u} y^{*_{u}} \cdot_{u} z + z \cdot_{u} y^{*_{u}} \cdot_{u}
x ).$
By direct calculation,  $\{x,y, z\}_1=\{x,y,z\}.$  This also follows since the identity map is a linear isometry, and therefore an isomorphism (\cite[Proposition (5.5)]{kaup}).  Therefore a
linear mapping $D: \mathcal{A}_2(u) \to \mathcal{A}_2(u)$ is a
triple derivation (resp. 2-local triple derivation) for the product $\{.,.,.\}$ if and only if it is
a triple derivation (resp. 2-local triple derivation)  for the product $\{.,.,.\}_1$.

\subsection{Properly infinite case}

In this subsection we will consider 2-local triple derivations on
$JBW^\ast$-triples of the form $\mathcal{A}=pM,$ where $p$ is a
properly  infinite projection in a  von Neumann algebra $M.$\smallskip

Let $q$ be a projection in $M$ and let  $D$ be a
triple derivation on $\mathcal{A}=pM.$ It is easily seen that
an operator $D_{(q)}$ on the  $JBW^\ast$-subtriple $pMq$ defined
by
\[
D_{(q)}(x)=D(x)q,\, x\in pMq
\]
is a  triple derivation on $pMq.$
Thus, if  $\Delta$ is a 2-local
triple derivation on $\mathcal{A}=pM,$ then the  operator $\Delta_{(q)}$ on the $JBW^\ast$-subtriple $pMq$ defined
by
\begin{equation}\label{resq}
\Delta_{(q)}(x)=\Delta(x)q,\, x\in pMq
\end{equation}
is a 2-local triple derivation on $pMq.$\smallskip

The following is the main result of this subsection.

\begin{theorem}\label{prop}
Let $M$ be a  von Neumann algebra and let $p$ be a properly
infinite projection in $M.$  Then any 2-local triple derivation
$\Delta$ on $\mathcal{A}=pM$ is  a triple derivation.
\end{theorem}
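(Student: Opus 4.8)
The plan is to reduce the problem to the known von Neumann algebra case (Theorem~\ref{vNa}) by exploiting the fact that a properly infinite projection $p$ is equivalent to a sequence of mutually orthogonal subprojections summing to $p$, so that $\mathcal{A}=pM$ "contains enough room" to encode the full algebra structure. The first step is to use proper infiniteness of $p$ to find a tripotent $u\in\mathcal{A}$ with $uu^*=p$ and $u^*u=q$ a properly infinite projection in $M$ with $q\le p$ (or more precisely, to set up a Peirce-2 corner $\mathcal{A}_2(u)$ which, as noted right before the theorem, is a von Neumann algebra isomorphic as a triple to $uu^*Mu^*u$, and which can be arranged to be $\ast$-isomorphic to a properly infinite von Neumann algebra large enough to work in). The key reduction idea, following the template of \cite{KOPR2014}, is: after subtracting an inner triple derivation of the form $L_a+R_b$ (allowed since $\Delta$ is a 2-local triple derivation and these are genuine triple derivations on $\mathcal{A}$), one may assume $\Delta(u)=0$ for a suitably chosen tripotent $u$. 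Then Lemma~\ref{zerotrip} applies and gives $\Delta(\mathcal{A}_k(u))\subseteq\mathcal{A}_k(u)$ for $k=0,1,2$.

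Next I would analyze $\Delta$ on the Peirce-2 space $\mathcal{A}_2(u)$, which is a von Neumann algebra under $\cdot_u,{}^{*_u}$. Since $\Delta$ preserves $\mathcal{A}_2(u)$ and restricts there to a 2-local triple derivation for the product $\{\cdot,\cdot,\cdot\}_1=\{\cdot,\cdot,\cdot\}$, Theorem~\ref{vNa} shows that $\Delta|_{\mathcal{A}_2(u)}$ is a (linear, continuous) triple derivation on the von Neumann algebra $\mathcal{A}_2(u)$. The next task is to propagate linearity and the derivation identity from this corner to all of $\mathcal{A}=pM$. For this I would use the "restriction to a column" operators $\Delta_{(q)}(x)=\Delta(x)q$ on $pMq$ defined in \eqref{resq}, which are again 2-local triple derivations; by choosing $q$ to range over the orthogonal pieces of a decomposition of $p$ and using that $p$ is properly infinite so each piece is itself equivalent to $p$, one transfers the conclusion piece by piece. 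The standard device here is to show $\Delta$ is additive: given $x,y\in\mathcal{A}$, embed them via partial isometries into orthogonal corners inside $\mathcal{A}_2(u)$ for a big enough $u$, use linearity of $\Delta$ on that corner, and then pull back; one also checks $\Delta$ is weak$^*$ (or at least SOT) continuous enough to conclude it is a bounded linear map, after which the 2-local hypothesis upgrades it to a genuine triple derivation on all of $\mathcal{A}$ by a pointwise argument (for each $x$, pick $D_{x,y}$ with a second point $y$ chosen to force agreement of the triple Leibniz identity).

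The main obstacle I anticipate is the gluing step: $\mathcal{A}=pM$ is only a left-sided piece of $M$, not a von Neumann algebra, so one cannot directly invoke Theorem~\ref{vNa} on $\mathcal{A}$ itself, and the Peirce-2 corner $\mathcal{A}_2(u)=pMp'$-type object only sees "square" information. Reconstructing the action of $\Delta$ on the "rectangular" part $pM(1-u^*u)$ and verifying the full triple derivation identity $\Delta(\{xyz\})=\{\Delta(x)yz\}+\{x\Delta(y)z\}+\{xy\Delta(z)\}$ for $x,y,z$ that do not all lie in one corner will require careful bookkeeping with Peirce calculus relative to several tripotents simultaneously, together with an orthogonality/additivity argument to patch the corners; proper infiniteness of $p$ is exactly what makes the needed supply of tripotents available, so the argument should close, but the combinatorics of the patching is where the real work lies. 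A secondary technical point is establishing continuity of $\Delta$ with no a priori regularity — this presumably goes through a boundedness argument for the associated measure on projections, which is precisely the role played by the Jordan Dorofeev-type boundedness theorem advertised in the introduction, though in the properly infinite von Neumann algebra setting the classical Bunce-Wright-Mackey-Gleason and Dorofeev-Sherstnev theorems already suffice.
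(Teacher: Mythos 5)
Your general direction---reduce to Theorem~\ref{vNa} via the corner maps $\Delta_{(q)}(x)=\Delta(x)q$ on $pMq$, with proper infiniteness of $p$ supplying the room---is the right one, but the proposal does not close the crucial additivity step, and the place where you locate the ``real work'' is not where the paper's argument actually lives. The missing idea is this: every $x\in pM$ has left support $\leq p$, hence right support $r(x)\preceq p$; halving $p$ five times gives orthogonal projections $e_1,\dots,e_6$ with $p=e_1+\cdots+e_6$ and $p\sim e_i$, so for fixed $x,y$ the six right supports of $x$, $y$, $x+y$, $\Delta(x)$, $\Delta(y)$, $\Delta(x+y)$ satisfy $q_i\preceq e_i$ and therefore $\bigvee_i q_i\preceq p$; one then takes $q=(\bigvee_i q_i)\vee p\sim p$. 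All six elements already lie in $pMq$, which is a von Neumann algebra via a partial isometry $u$ with $uu^\ast=p$, $u^\ast u=q$, and $\Delta_{(q)}$ is a 2-local triple derivation there, hence a triple derivation by Theorem~\ref{vNa}; since $\Delta(z)q=\Delta(z)$ for these elements, $\Delta(x+y)=\Delta(x)+\Delta(y)$ follows immediately. There is no ``embedding via partial isometries into orthogonal corners,'' no pull-back, and no patching of several Peirce decompositions: for each pair $x,y$ a single corner suffices. Your paragraph anticipating heavy Peirce bookkeeping to verify the triple Leibniz identity across corners is describing an obstacle that the actual argument never meets.

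The endgame is also much shorter than you suggest. Once $\Delta$ is additive it is linear (2-local triple derivations are automatically homogeneous), and a linear 2-local triple derivation is in particular a local triple derivation; by \cite[Theorem 2.8]{BurPolPerBLMS14} it is then automatically continuous and a triple derivation. So neither a separate continuity/boundedness argument, nor the Dorofeev--Sherstnev or Bunce--Wright--Mackey--Gleason machinery, nor a pointwise verification of the Leibniz identity is needed here; that machinery enters only in the Jordan-algebra part of the paper (Section~\ref{section5}). Likewise the preliminary normalization $\Delta(u)=0$ via subtracting $L_a+R_b$ and the appeal to Lemma~\ref{zerotrip} belong to the \emph{finite} case (Theorem~\ref{finite}), not to the properly infinite one, and are superfluous for Theorem~\ref{prop}.
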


\begin{proof}  Since $p$ is
properly infinite, by  using the  halving Lemma  five times (see for example \cite[Lemma 6.3.3]{KR86})  we
can find mutually orthogonal projections $e_1, \ldots, e_6$ in $M$
such that  $p\sim e_1\sim\ldots \sim e_6$ and $p=e_1+\cdots +e_6.$\smallskip

Denote by $r(x)$ and $l(x)$ the right and left supports in $M$ of
the element $x$ from $M,$ respectively. Since $r(x)\sim l(x)$ (see \cite[Proposition V.1.5]{Tak})  and
$l(x)\leq p,$ it follows that $r(x)\preceq p$ for all $x\in
\mathcal{A}.$\smallskip

Let $x,y\in \mathcal{A}.$ Denote by $q_1, \ldots, q_6$ the right
supports of elements $x,$ $y,$ $x+y,$ $\Delta(x),$ $\Delta(y)$ and
$\Delta(x+y),$ respectively. Then $q_i\preceq p$ for all $i\in
\{1,\ldots,6\}.$ Since $p\sim e_i$ for all $i,$
it follows that  $q_i\preceq e_i$ for all $i\in \{1,\ldots,6\}.$
Therefore $\bigvee\limits_{i=1}^6q_i\preceq e_1+\ldots +e_6=p$
(see \cite[Exercise 6.9.3]{KR92}).\smallskip

Let us  show the existence of a  projection $q\in M$ such that
$\bigvee\limits_{i=1}^6 q_i\leq q\sim p=e_1+\cdots +e_6.$ Since
$p$ is properly infinite by \cite[Exercise 6.9.4]{KR92} it follows
that
$$
\left(\bigvee\limits_{i=1}^6 q_i\right)\vee p\sim p.
$$
Then it  suffices to take $q=\left(\bigvee\limits_{i=1}^6
q_i\right)\vee p.$\smallskip

Since $p\sim q$ there exists a partially isometry $u\in M$ such
that $uu^\ast =p,$ $u^\ast u=q.$  As was mentioned before this
subsection, $p M q=uu^\ast M u^\ast u$ is a von Neumann algebra
with respect to product and involution given by $x\cdot_{u} y = x
u^\ast y$ and $x^{*_{u}}= u x^\ast u$, respectively. \smallskip

Let $\Delta_{(q)}$ be the 2-local triple derivation on $pMq$ defined
by \eqref{resq}. Then by Theorem~\ref{vNa},  $\Delta_{(q)}$ is a
triple derivation. By the construction of $q$ it follows that $x,
y, x+y, \Delta(x), \Delta(y), \Delta(x+y)$ all belong to
$pMq.$ Therefore
\begin{eqnarray*}
\Delta(x+y) & = & \Delta(x+y)q=\Delta_{(q)}(x+y)=
\Delta_{(q)}(x)+\Delta_{(q)}(y)= \\
& =& \Delta(x)q+\Delta(y)q=\Delta(x)+\Delta(y).
\end{eqnarray*}
Thus $\Delta$ is additive and hence linear.  Since every (linear) local triple derivation on a JB*-triple is automatically continuous and hence a triple derivation (see \cite[Theorem 2.8]{BurPolPerBLMS14}),  the proof is complete.
\end{proof}

\subsection{Finite case}

In this subsection we will consider 2-local triple derivations on
$JBW^\ast$-triples of the form $\mathcal{A}=pM,$ where $p$ is a
finite projection in a  von Neumann algebra $M.$\smallskip

Let $D$ be a triple derivation on $\mathcal{A}.$ Set, for a tripotent $u\in \mathcal{A}$,
\begin{equation}\label{res}
D^{(u)}(x)=\{u,\{u,D(x),u\},u\}=uu^\ast D(x)u^\ast u,\, x\in
\mathcal{A}_2(u).
\end{equation}
It is easily seen that $D^{(u)}$ is a triple derivation on
$\mathcal{A}_2(u).$\smallskip

Let $\Delta$ be a 2-local triple derivation on $\mathcal{A}$ and
let $u$ be a tripotent in $\mathcal{A}.$ Then
\begin{equation}\label{tripo}
uu^\ast \Delta(u)u^\ast u=-u \Delta(u)^\ast u.
\end{equation}

Indeed, take a triple derivation $D$ on $\mathcal{A}$ with
$\Delta(u)=D(u).$ From the equality $\{u, u, u\}=u,$ we have that
\begin{equation}\label{trid}
uu^\ast D(u)u^\ast u=-u D(u)^\ast u,
\end{equation}
which implies \eqref{tripo}.

\begin{lemma}\label{aabb}%3.2
Let $\Delta$ be a 2-local derivation on $\mathcal{A}.$
There exist skew-hermitian elements $a_1$ in $pMp$ and $b_1$ in
$M$ such that $$ \Delta(p)=L_{a_1}(p)+R_{b_1}(p).
$$
\end{lemma}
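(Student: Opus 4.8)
The plan is to exploit the fact that $p$ is a tripotent in $\mathcal{A}=pM$ (indeed $\{p,p,p\}=pp^*p=p$), so we may apply the defining property of a 2-local triple derivation at the single point $p$: there exists a genuine triple derivation $D$ on $\mathcal{A}$ with $\Delta(p)=D(p)$. The crux is therefore a structural description of triple derivations on $\mathcal{A}=pM$: I want to show that every triple derivation $D$ on $pM$ has the form $D=L_a+R_b$ for suitable skew-hermitian $a\in pMp$ and $b\in M$, at least when evaluated at $p$. First I would record, as in \eqref{trid} applied to the tripotent $p$, that $p p^* D(p) p^* p = -p D(p)^* p$, i.e. writing $z:=D(p)=\Delta(p)\in pM$ and noting $p^*=p$, that $p z p = -p z^* p$; equivalently the ``corner'' $pzp\in pMp$ is skew-hermitian. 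This handles the Peirce-$2$ component of $z$ relative to $p$.

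Next I would decompose $z=D(p)\in pM$ according to the left multiplication by $p$: since $z\in pM$ we automatically have $pz=z$, so $z = pzp + pz(1-p)$, the first summand lying in $pMp$ and the second in $pM(1-p)$. The first piece, by the previous paragraph, is a skew-hermitian element of $pMp$; call it $a_1$, so that $L_{a_1}(p)=a_1 p = a_1 = pzp$ (using $a_1\in pMp$). For the second piece I would produce a skew-hermitian $b_1\in M$ with $R_{b_1}(p)=pb_1 = pz(1-p)$. The natural choice is $b_1 := pz(1-p)-(1-p)z^*p$, which is manifestly skew-hermitian in $M$; then $p b_1 = p\,pz(1-p) - p(1-p)z^*p = pz(1-p)$, since $p(1-p)=0$ and $pz=z$ gives $p\,pz(1-p)=pz(1-p)$. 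Thus $\Delta(p)=z = pzp+pz(1-p) = a_1 + p b_1 = L_{a_1}(p)+R_{b_1}(p)$, as desired. Note I have not even needed to invoke that $p$ is finite; the argument works for any projection, which is consistent with the lemma being stated without that hypothesis.

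I expect the main obstacle to be purely bookkeeping: making sure the decomposition $z=pzp+pz(1-p)$ together with the skew-hermiticity relation $pzp=-pz^*p$ is correctly married to the definitions \eqref{left}–\eqref{right} of $L_{a_1}$ and $R_{b_1}$, in particular checking that $L_{a_1}$ and $R_{b_1}$ genuinely map $\mathcal{A}$ to $\mathcal{A}$ (which is already observed in the text: $a_1\in pMp$ and $b_1\in M$ are exactly the hypotheses needed for $L_{a_1},R_{b_1}$ to be triple derivations on $\mathcal{A}=pM$). One subtle point to verify carefully is that the relation coming from \eqref{trid} really does say $pzp$ is skew-hermitian and not something weaker; this follows because for the tripotent $p$ one has $p^*=p$ and $\{p,p,p\}=p$, so differentiating the triple identity at $p$ in the ambient $M$ (where $\{xyz\}=\tfrac12(xy^*z+zy^*x)$) yields $\{D(p),p,p\}+\{p,D(p),p\}+\{p,p,D(p)\}=D(p)$, and compressing by $p$ on both sides collapses this to $p z p + p z^* p = 0$. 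With that identity in hand the rest is the elementary algebra above.
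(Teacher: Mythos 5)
Your proposal is correct and follows essentially the same route as the paper: the paper likewise sets $a_1=\Delta(p)p$ (which equals your $pzp$ since $p\Delta(p)=\Delta(p)$) and $b_1=\Delta(p)p^{\perp}-p^{\perp}\Delta(p)^{\ast}$ (which equals your $pz(\mathbf{1}-p)-(\mathbf{1}-p)z^{\ast}p$), deduces skew-hermiticity of $a_1$ from the identity \eqref{tripo}, and verifies $L_{a_1}(p)+R_{b_1}(p)=\Delta(p)$ by the same computation.
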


\begin{proof}
Set
\begin{center}
$a_1=\Delta(p)p$ and $b_1=\Delta(p)p^{\perp}-p^{\perp}\Delta(p)^\ast,$
\end{center}
where $p^{\perp}=\mathbf{1}-p.$ From~\eqref{tripo} it follows that  $a_1$
is skew-hermitian. It is clear that $b_1$ is also skew-hermitian.
We have
$$
L_{a_1}(p)+R_{b_1}(p)=a_1p+pb_1=\Delta(p)p+p\Delta(p)p^{\perp}=\Delta(p)p+\Delta(p)p^{\perp}=\Delta(p).
$$
\end{proof}

\begin{lemma}\label{restr}%3.3
Let $\Delta$ be a 2-local derivation on $\mathcal{A}.$
Suppose that $\Delta(p)=0.$ Then there exists a skew-hermitian
element $a_2$ in $pMp$ such that $\Delta(x)=L_{a_2}(x)-R_{a_2}(x)$
for all $x\in \mathcal{A}_2(p)=pMp.$
\end{lemma}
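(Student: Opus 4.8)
The plan is to restrict $\Delta$ to the Peirce-$2$ space $\mathcal{A}_2(p)=pMp$, recognise the restriction as a $2$-local triple derivation on a \emph{von Neumann algebra}, invoke Theorem~\ref{vNa}, and then read off the form of a triple derivation that annihilates the unit. First, $p$ is a tripotent of $\mathcal{A}=pM$ with $\Delta(p)=0$, so Lemma~\ref{zerotrip} gives $\Delta\left(\mathcal{A}_2(p)\right)\subseteq\mathcal{A}_2(p)=pMp$; write $\Delta_p$ for the resulting self-map of $pMp$, noting $\Delta_p(p)=\Delta(p)=0$. To see that $\Delta_p$ is a $2$-local triple derivation on $pMp$, fix $x,y\in pMp$ and choose a triple derivation $D=D_{x,y}$ on $\mathcal{A}$ with $D(x)=\Delta(x)$ and $D(y)=\Delta(y)$. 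The compression $D^{(p)}$ of $D$ given by \eqref{res}, namely $D^{(p)}(z)=p\,D(z)\,p$ for $z\in pMp$, is a triple derivation on $pMp$, and since $\Delta(x),\Delta(y)\in pMp$ we get $D^{(p)}(x)=p\,\Delta(x)\,p=\Delta(x)=\Delta_p(x)$ and likewise $D^{(p)}(y)=\Delta_p(y)$. Because $p$ is a projection the triple product of $pMp$ as a von Neumann algebra coincides with the one it inherits from $\mathcal{A}$, so $\Delta_p$ is a $2$-local triple derivation on the von Neumann algebra $pMp$; by Theorem~\ref{vNa} it is a triple derivation, hence in particular linear.

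It then remains to describe a triple derivation $\delta:=\Delta_p$ of the von Neumann algebra $N:=pMp$ with $\delta(p)=0$, $p$ being the unit of $N$. Since $\{x,p,y\}=x\circ y$ (the Jordan product of $N$) and $\delta(p)=0$, the triple Leibniz rule shows $\delta$ is a Jordan derivation of $N$; since $\{x,y,p\}=\tfrac12(xy^{*}+y^{*}x)$, putting $x=p$ in the triple Leibniz rule gives $\delta(y^{*})=\delta(y)^{*}$, so $\delta$ is a $^{*}$-map. A Jordan derivation of a von Neumann algebra is an associative derivation (von Neumann algebras are semiprime), hence inner by Sakai's theorem, say $\delta=[c,\cdot\,]$ with $c\in N$; the $^{*}$-property forces $c+c^{*}$ to be central, so on replacing $c$ by $\tfrac12(c-c^{*})$, which does not change $\delta$, we may take $c$ skew-hermitian. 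With $a_{2}:=c\in pMp$ this yields $\Delta(x)=\Delta_p(x)=a_{2}x-xa_{2}=L_{a_{2}}(x)-R_{a_{2}}(x)$ for every $x\in\mathcal{A}_2(p)=pMp$, where $a_{2}$ is skew-hermitian, as required.

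The steps that require care are the two reductions of the first paragraph: verifying that Lemma~\ref{zerotrip} genuinely confines $\Delta$ to $pMp$, and that the compression \eqref{res} turns each ambient triple derivation $D_{x,y}$ into a triple derivation of $pMp$ still agreeing with $\Delta$ at $x$ and $y$. Once these are in place the argument is entirely driven by the von Neumann algebra case, Theorem~\ref{vNa}; the concluding structure-theory paragraph is routine and is precisely where the hypothesis $\Delta(p)=0$ is used --- to kill the summand $x\mapsto\tfrac12(\delta(p)x+x\delta(p))$ of a general triple derivation and leave only a skew-hermitian commutator.
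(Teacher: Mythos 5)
Your proposal is correct and follows the paper's proof essentially verbatim: Lemma~\ref{zerotrip} to confine $\Delta$ to $pMp$, the compression \eqref{res} to see that $\Delta|_{pMp}$ is a 2-local triple derivation on the von Neumann algebra $pMp$, Theorem~\ref{vNa} to get linearity, and then the structure of a triple derivation vanishing at the unit. The only difference is cosmetic: where the paper cites \cite[beginning of section 2]{KOPR2014} for the final step, you supply the standard argument (Jordan $^*$-derivation, hence an inner associative derivation by semiprimeness and Sakai, with the implementing element adjusted to be skew-hermitian), and that argument is sound.
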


\begin{proof} Since $\Delta(p)=0,$ Lemma~\ref{zerotrip} implies
that $\Delta$ maps  $\mathcal{A}_2(p)=pMp$ into itself.\smallskip

Let $x, y\in \mathcal{A}_2(p).$ Take a triple derivation $D_{x,y}$ on
$\mathcal{A}$ such that
$$
\Delta(x)=D_{x,y}(x),\, \Delta(y)=D_{x,y}(y).
$$
Let $D_{x,y}^{(p)}$ be the triple derivation defined by \eqref{res}. Then
$$
\Delta(x)=D_{x,y}^{(p)}(x),\, \Delta(y)=D_{x,y}^{(p)}(y).
$$
This means that the restriction $\Delta|_{\mathcal{A}_2(p)}$ is a
2-local triple  derivation on the von Neumann algebra
$\mathcal{A}_2(p).$ By Theorem~\ref{vNa},
$\Delta|_{\mathcal{A}_2(p)},$ is a triple derivation. Since
$\Delta(p)=0,$ there exists a skew-hermitian element $a_2$ in
$pMp$ such that $\Delta(x)=a_2x-xa_2$ for all
$x\in \mathcal{A}_2(p)=pMp$ (see \cite[beginning of section 2]{KOPR2014}).
\end{proof}

Let $D$ be an arbitrary triple derivation (or a 2-local triple derivation) on $\mathcal{A}$. Then
$D$ can be decomposed in the form
\begin{equation}\label{deco}
D=D_1+D_2,
\end{equation}
where $D_1=L_a+R_b$, with $a,b$ skew-hermitian and $D_2|_{\mathcal{A}_2(p)}\equiv0.$\smallskip

Indeed, by Lemma~\ref{aabb},  there exist skew-hermitian elements $a_1\in pMp$ and $b_1\in
M$ such that $(D-L_{a_1}-R_{b_1})(p)=0$.  By Lemma~\ref{restr},  there is a skew-hermitian element $a_2\in pMp$ such that $(D-L_{a_1}-R_{b_1})(x)=L_{a_2}x-R_{a_2}x$ for all $x\in pMp$.
Now it suffices to set
\begin{center}
$D_1=L_{a_1+a_2}+R_{b_1-a_2}$ and $D_2=D-D_1.$
\end{center}

\begin{lemma}\label{skewone}
Let $D$ be a triple derivation on $\mathcal{A}$ such that
$D|_{\mathcal{A}_2(p)}\equiv0.$ Then
\begin{equation}\label{sk}
D(x)y^\ast+xD(y)^\ast=0
\end{equation}
for all $x,y\in \mathcal{A}.$
\end{lemma}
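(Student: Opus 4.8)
The plan is to first establish the auxiliary fact that $D$ maps $\mathcal{A}=pM$ into the corner $pMp^{\perp}$, i.e. $D(x)p=0$ for all $x\in\mathcal{A}$, and then to deduce \eqref{sk} by differentiating one carefully chosen triple product and projecting the result back into $\mathcal{A}_2(p)=pMp$.

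For the auxiliary fact, the key observation is that although a triple product of elements of $pM$ need not stay in $pMp$ (elements of $pM$ are left- but not right-$p$-invariant), the particular product $\{p,x,p\}=px^{\ast}p$ does lie in $pMp$. Hence $D(\{p,x,p\})=0$, and since $D(p)=0$ the triple Leibniz identity collapses to $\{p,D(x),p\}=pD(x)^{\ast}p=0$. As $D(x)\in pM$ gives $D(x)^{\ast}p=D(x)^{\ast}$, we have $pD(x)^{\ast}p=pD(x)^{\ast}$, so $pD(x)^{\ast}=0$, and taking adjoints $D(x)p=0$.

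Next I would fix $w\in\mathcal{A}_2(p)=pMp$ and compute $D(\{x,y,w\})$ in two ways. Writing $\{x,y,w\}=\tfrac12(xy^{\ast}w+wy^{\ast}x)$ and using $xy^{\ast}\in pMp$, the summand $xy^{\ast}w$ lies in $pMp$ and is killed by $D$, so $D(\{x,y,w\})=\tfrac12 D(wy^{\ast}x)$. On the other hand, the Leibniz identity together with $D(w)=0$ gives $D(\{x,y,w\})=\{D(x),y,w\}+\{x,D(y),w\}$. I would then multiply the resulting equality on the right by $p$. The left-hand term $\tfrac12 D(wy^{\ast}x)p$ vanishes by the auxiliary fact; in $\{D(x),y,w\}$ the term $wy^{\ast}D(x)p$ drops out (again $D(x)p=0$), leaving $\tfrac12 D(x)y^{\ast}w$; and in $\{x,D(y),w\}$ the term $wD(y)^{\ast}x$ already vanishes, since $w=wp$ and $D(y)^{\ast}=p^{\perp}D(y)^{\ast}$ force $wD(y)^{\ast}=0$, leaving $\tfrac12 xD(y)^{\ast}w$. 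Thus $\tfrac12\big(D(x)y^{\ast}+xD(y)^{\ast}\big)w=0$ for every $w\in pMp$; taking $w=p$ and noting that $D(x)y^{\ast}+xD(y)^{\ast}\in pMp$ (so that right multiplication by $p$ fixes it) yields \eqref{sk}.

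The step I expect to be the main obstacle is precisely the realization that one cannot directly apply the hypothesis $D|_{\mathcal{A}_2(p)}\equiv0$ to an arbitrary triple product of elements of $pM$, because such products generally leave $pMp$. The way around this is the two-part device above: the preliminary reduction $D(\mathcal{A})\subseteq pMp^{\perp}$, and the choice of the product $\{x,y,w\}$ with $w\in pMp$, in which exactly one of the two symmetrized terms lands in $pMp$ and is annihilated, after which right multiplication by $p$ isolates the bilinear expression $D(x)y^{\ast}+xD(y)^{\ast}$.
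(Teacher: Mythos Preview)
Your argument is correct. The auxiliary fact $D(\mathcal{A})\subseteq pMp^{\perp}$ is exactly the statement the paper invokes via Lemma~\ref{zerotrip} (there phrased as ``$D$ maps $\mathcal{A}$ into $\mathcal{A}_1(p)$''), and your direct derivation of it from $\{p,x,p\}=px^{\ast}p\in pMp$ is clean. The main step is also essentially the paper's: both proofs differentiate a triple product with $p$ (or $w\in pMp$) in the third slot and use that $xy^{\ast}\in pMp$ is annihilated by $D$.

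The only real difference is organizational. The paper first treats $x,y\in\mathcal{A}_1(p)$, computing $2\{x,D(y),p\}=xD(y)^{\ast}$ directly because all the unwanted terms vanish on the nose for Peirce-$1$ elements, and then disposes of general $x,y$ by the Peirce decomposition $x=x_2+x_1$, $y=y_2+y_1$, checking that the cross terms $D(x_1)y_2^{\ast}$ and $x_2D(y_1)^{\ast}$ vanish. You instead keep $x,y\in\mathcal{A}$ arbitrary throughout and use right multiplication by $p$ as a projection device to kill the surplus terms in one stroke. Your route is a little more economical (no case split), while the paper's route makes the Peirce bookkeeping more transparent; the underlying mechanism is the same.
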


\begin{proof} Let us first consider  a case $x, y\in \mathcal{A}_1(p)=pM(\mathbf{1}-p).$\smallskip

Since $D|_{\mathcal{A}_2(p)}\equiv0,$ it follows from Lemma~\ref{zerotrip} that $D$ maps
$\mathcal{A}$ into $\mathcal{A}_1(p).$  Taking into account these
properties we have
\begin{eqnarray*}
x D(y)^\ast  & = &    x D(y)^\ast p+p D(y)^\ast x= 2\{x, D(y), p\}=\\
& = & 2D\left(\{x,y,p\}\right)-2\{D(x), y, p\}-2\{x, y, D(p)\}=\\
& = & D(xy^\ast p+py^\ast x)-D(x)y^\ast p- py^\ast D(x)=\\
& = & D(xy^\ast)-D(x)y^\ast=-D(x)y^\ast,
\end{eqnarray*}
i.e. $D(x)y^\ast+xD(y)^\ast=0$ for $x,y\in{\mathcal A}_1(p).$\smallskip

Let now $x, y\in \mathcal{A}$ be arbitrary and let $x=x_2+x_1,
y=y_2+y_1\in \mathcal{A}=\mathcal{A}_2(p)+\mathcal{A}_1(p).$ We
have
\begin{eqnarray*}
D(x)y^\ast+xD(y)^\ast & = &
D(x_2+x_1)(y_2+y_1)^\ast+(x_2+x_1)D(y_2+y_1)^\ast= \\
& = & D(x_1)y_2^\ast+x_1D(y_1)^\ast+D(x_1)y_1^\ast+x_2D(y_1)^\ast\\
& = & D(x_1)y_2^\ast+x_2D(y_1)^\ast=0,
\end{eqnarray*}
because $D(x_1)y_2^\ast=(D(x_1)(\mathbf{1}-p))(y_2p)^\ast=0$ and
$x_2D(y_1)^\ast=x_2p(D(y_1)(\mathbf{1}-p))^\ast=0.$ The proof is
complete.
\end{proof}

Since $pMp$ is finite, there exists a faithful center-valued trace
$\tau$ on $pMp,$ that is, a linear map from $pMp$ into the center,
$Z(pMp)$,  of $pMp$ such that
\begin{enumerate}
\item[(i)]  $\tau(xy)=\tau(yx)$ for all $x,y\in pMp;$
\item[(ii)] $\tau(z)=z$ for all $z\in Z(pMp);$
\item[(iii)] $\tau(xx^\ast)=0$ implies $x=0.$
\end{enumerate}

Define a $Z(pMp)$-valued sesquilinear form on $\mathcal{A}$ by
$$
\langle x,y\rangle=\tau(xy^\ast),\, x,y\in \mathcal{A}.
$$

Since $\tau$ is faithful it follows that the form
$\langle\cdot,\cdot\rangle$ is non-degenerate, i.e. $\langle
x,y\rangle=0$ for all $y\in \mathcal{A}$ implies that $x=0.$

\begin{lemma}\label{ccc}%2.8
Let $D$ be an arbitrary triple derivation on $\mathcal{A}.$ Then
$$
\langle D(x), y\rangle = -\langle x, D(y)\rangle
$$
for all $x, y\in \mathcal{A}.$
\end{lemma}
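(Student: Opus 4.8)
The plan is to reduce the statement to the decomposition \eqref{deco} together with Lemma~\ref{skewone}. Writing $D = D_1 + D_2$ with $D_1 = L_a + R_b$ ($a \in pMp$, $b \in M$ skew-hermitian) and $D_2|_{\mathcal{A}_2(p)} \equiv 0$, and noting that $\langle \cdot, \cdot \rangle$ is sesquilinear while both sides of the claimed identity are additive in $D$, it suffices to verify the identity separately for $D = D_1$ and for $D = D_2$. So the argument splits into these two cases.

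For $D = L_a + R_b$ with $a^* = -a \in pMp$ and $b^* = -b \in M$: here $D(x) = ax + xb$, and using property (i) of the trace $\tau$ together with $a^* = -a$, $b^* = -b$ I would compute
\begin{eqnarray*}
\langle D(x), y\rangle &=& \tau\big((ax + xb)y^*\big) = \tau(axy^*) + \tau(xby^*)\\
&=& \tau(xy^* a) + \tau(x b y^*) = -\tau(x y^* a^*) - \tau(x (y b^*)^*)\\
&=& -\tau\big(x(ay + yb)^*\big) = -\langle x, D(y)\rangle,
\end{eqnarray*}
where in the third equality I used the trace property $\tau(axy^*) = \tau(xy^*a)$, and the skew-hermiticity of $a$ and $b$ to rewrite $a = -a^*$ and $b = -b^*$. (One should check that $axy^*$, $xy^*a$ etc. do lie in $pMp$ so that $\tau$ applies: since $x, y \in \mathcal{A} = pM$ we have $xy^* \in pMp$, and $a \in pMp$, so all products are in $pMp$.)

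For $D = D_2$ with $D_2|_{\mathcal{A}_2(p)} \equiv 0$: Lemma~\ref{skewone} gives $D_2(x)y^* + x D_2(y)^* = 0$ for all $x, y \in \mathcal{A}$. Applying $\tau$ to this identity — again all terms lie in $pMp$ since $D_2$ maps $\mathcal{A}$ into $\mathcal{A}_1(p) \subseteq pM$ by Lemma~\ref{zerotrip}, so $D_2(x)y^*$ and $x D_2(y)^*$ are in $pMp$ — yields $\tau(D_2(x)y^*) + \tau(x D_2(y)^*) = 0$, that is $\langle D_2(x), y\rangle + \langle x, D_2(y)\rangle = 0$, which is exactly the desired identity for $D_2$. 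Adding the two cases completes the proof.

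The only real obstacle is bookkeeping: one must make sure each product being fed to $\tau$ actually lies in $pMp$ (so that center-valued trace is defined and its trace property applies), and one must handle the decomposition \eqref{deco} correctly — in particular that it applies to an \emph{arbitrary} triple derivation, not just to one arising from a 2-local triple derivation, which is fine since \eqref{deco} was established for any triple derivation on $\mathcal{A}$. No deeper structural input is needed beyond Lemma~\ref{skewone}, the trace properties (i)–(iii), and the decomposition.
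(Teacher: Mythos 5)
Your proof is correct and follows essentially the same route as the paper: decompose $D=D_1+D_2$ via \eqref{deco}, handle $D_1=L_a+R_b$ by reducing to $\tau(axy^*)=\tau(xy^*a)$ with $a, xy^*\in pMp$ (the paper phrases this as the center-valued trace annihilating the commutator $axy^*-xy^*a$), and handle $D_2$ by applying $\tau$ to the identity of Lemma~\ref{skewone}. The only blemish is a typo in the $D_1$ computation: the term $-\tau(x(yb^*)^*)$ should read $-\tau(x(yb)^*)=-\tau(xb^*y^*)$, which is what the subsequent expression $-\tau(x(ay+yb)^*)$ requires and what $b^*=-b$ delivers.
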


\begin{proof} Let $D=D_1+D_2$ be a decomposition of $D$ in the
form \eqref{deco}. For $x,y\in {\mathcal A}$, we have $a^*=-a\in pMp$ and $b^*=-b\in M$ such that
\begin{eqnarray*}
D_1(x) y^\ast +x D_1(y)^\ast  & = & (ax+xb)y^\ast + x(ay+yb)^\ast =\\
& = & axy^\ast +xby^\ast +xy^\ast a^\ast+xb^\ast y^\ast=\\
& = & axy^\ast +xby^\ast -xy^\ast a-x b y^\ast=\\
& = & axy^\ast -xy^\ast a,
\end{eqnarray*}
i.e.
\begin{eqnarray*}
D_1(x) y^\ast +x D_1(y)^\ast  & = & axy^\ast -xy^\ast a.
\end{eqnarray*}
Since  a center-valued trace annihilates commutators we have
that
\begin{eqnarray*}
\tau\left(D_1(x) y^\ast +x D_1(y)^\ast\right)=0.
\end{eqnarray*}
Thus
$$
\langle D_1(x), y\rangle = -\langle x, D_1(y)\rangle.
$$

On the other hand, by Lemma~\ref{skewone} it follows that
$$
\langle D_2(x), y\rangle = -\langle x, D_2(y)\rangle.
$$
The proof is complete.
\end{proof}

The following is the main result of this subsection.

\begin{theorem}\label{finite}
Let $M$ be a  von Neumann algebra and let $p$ be a finite
projection in $M.$  Then any 2-local triple derivation $\Delta$ on
$\mathcal{A}=pM$ is  a triple derivation.
\end{theorem}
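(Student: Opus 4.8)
The plan is to reduce the finite case to the already-established von Neumann algebra result (Theorem~\ref{vNa}) by a center-valued trace argument, using the orthogonality relation of Lemma~\ref{ccc} to pin down the structure of $\Delta$ on all of $\mathcal{A}=pM$. First I would invoke the decomposition of the preceding paragraphs: using Lemma~\ref{aabb} and Lemma~\ref{restr}, write $\Delta$ (as a mere map, pointwise via its local derivations) so that after subtracting an honest triple derivation $D_1=L_{a_1+a_2}+R_{b_1-a_2}$ with $a_1+a_2\in pMp$, $b_1-a_2\in M$ skew-hermitian, the remaining 2-local triple derivation $\Delta_0:=\Delta-D_1$ satisfies $\Delta_0(p)=0$ and $\Delta_0|_{\mathcal{A}_2(p)}\equiv 0$. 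Since a triple derivation minus a triple derivation is a triple derivation only in the linear world, I must be careful: $\Delta_0$ is still only a 2-local triple derivation, but it is the object to analyze. The goal becomes to show $\Delta_0\equiv 0$, which gives $\Delta=D_1$, a triple derivation.

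Next I would exploit Lemma~\ref{zerotrip}: since $\Delta_0(p)=0$, $\Delta_0$ maps each Peirce space $\mathcal{A}_k(p)$ into itself; but $\mathcal{A}=pM$ has only $\mathcal{A}_2(p)=pMp$ and $\mathcal{A}_1(p)=pM(\mathbf 1-p)$ (the $0$-space is trivial), and $\Delta_0$ kills $\mathcal{A}_2(p)$ by construction, so all the action is $\Delta_0:\mathcal{A}_1(p)\to\mathcal{A}_1(p)$. Now fix $x\in\mathcal{A}_1(p)$ and pick a triple derivation $D_{x}$ with $\Delta_0(x)=D_{x}(x)$ (here I would need $D_x$ to also reflect that $\Delta_0$ vanishes at enough points — this is where I expect to match $\Delta_0$ against a derivation of the form appearing in the decomposition; concretely, for the pair $\{p,x\}$ we may choose $D_{p,x}$ with $D_{p,x}(p)=0$, whence by Lemma~\ref{restr}-type reasoning and Lemma~\ref{skewone}, $D_{p,x}(y)z^* + y D_{p,x}(z)^*=0$ on $\mathcal{A}_1(p)$). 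Then apply Lemma~\ref{ccc} (the center-valued trace orthogonality) to the genuine triple derivation $D_{p,x}$: for all $y\in\mathcal{A}$, $\langle D_{p,x}(x),y\rangle=-\langle x,D_{p,x}(y)\rangle$. Taking $y=x$ gives $\langle\Delta_0(x),x\rangle = \langle D_{p,x}(x),x\rangle=-\langle x,D_{p,x}(x)\rangle=-\langle x,\Delta_0(x)\rangle=-\overline{\langle\Delta_0(x),x\rangle}$ in the appropriate sense, forcing $\tau\big(\Delta_0(x)x^*+x\Delta_0(x)^*\big)=0$. Combined with Lemma~\ref{skewone} applied to $D_{p,x}$ (which gives the stronger pointwise identity $\Delta_0(x)x^* + x\Delta_0(x)^* = D_{p,x}(x)x^*+xD_{p,x}(x)^*=0$), I expect to get $\Delta_0(x)x^*=-x\Delta_0(x)^*$, i.e. $\Delta_0(x)x^*$ is skew-hermitian in $pMp$.

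To finish, I would feed this back through the trace: from $\Delta_0(x)x^*=-x\Delta_0(x)^*$ one gets $\langle \Delta_0(x),x\rangle = \tau(\Delta_0(x)x^*)$ is a skew-hermitian element of $Z(pMp)$, while replacing $x$ by $\lambda x$ for suitable phases $\lambda$ and using homogeneity of $\Delta_0$ pins the real part; then using the $2$-local property at pairs $\{x+y,\cdot\}$ together with Lemma~\ref{ccc} for $D_{x,y}$ gives the full polarized identity $\langle\Delta_0(x),y\rangle = -\langle x,\Delta_0(y)\rangle$ for \emph{all} $x,y\in\mathcal{A}$ — and since on $\mathcal{A}_1(p)$ one also has $\langle\Delta_0(x),y\rangle=\tau(\Delta_0(x)y^*)$ with $\Delta_0(x)y^*$ skew by Lemma~\ref{skewone}, faithfulness and non-degeneracy of $\langle\cdot,\cdot\rangle$ force $\Delta_0(x)=0$ for every $x\in\mathcal{A}_1(p)$, hence $\Delta_0\equiv 0$ and $\Delta=D_1$. \textbf{The main obstacle} I anticipate is the bookkeeping around 2-locality: $\Delta_0$ is not linear, so every step that "subtracts a derivation" or "polarizes" must be justified at the level of the local derivations $D_{x,y}$ attached to specific pairs of points, and one must check that Lemma~\ref{skewone} and Lemma~\ref{ccc} can legitimately be invoked for those particular derivations — in particular that the chosen $D_{p,x}$ restricts trivially to $\mathcal{A}_2(p)$ so that Lemma~\ref{skewone} applies. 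Getting that alignment right, rather than any hard analysis, is the crux.
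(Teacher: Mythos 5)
You have the right key tool --- Lemma~\ref{ccc}, i.e.\ the skew-adjointness $\langle D(x),y\rangle=-\langle x,D(y)\rangle$ of every triple derivation with respect to the form $\langle x,y\rangle=\tau(xy^*)$ --- and you even state the decisive consequence, the polarized identity $\langle\Delta(x),y\rangle=-\langle x,\Delta(y)\rangle$. But your endgame fails. First, the target $\Delta_0\equiv 0$ is false: it would assert that $\Delta$ equals $L_{a_1+a_2}+R_{b_1-a_2}$ for the specific $a_1,b_1$ of Lemma~\ref{aabb} and some $a_2\in pMp$. Take $M=M_3(\mathbb{C})$, $p=e_{11}$ and $\Delta=R_b$ with $b=i(e_{22}-e_{33})$ (an honest triple derivation, hence 2-local); then $\Delta(p)=0$ forces $a_1=b_1=0$ and $a_2=\lambda e_{11}$, so $\Delta_0(e_{12})=(i-\lambda)e_{12}$ and $\Delta_0(e_{13})=(-i-\lambda)e_{13}$ cannot both vanish. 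Second, the step meant to deliver $\Delta_0(x)=0$ is a non sequitur: non-degeneracy of $\langle\cdot,\cdot\rangle$ kills $\Delta_0(x)$ only if $\langle\Delta_0(x),y\rangle=0$ for \emph{all} $y$, whereas what you actually have is $\langle\Delta_0(x),y\rangle=-\langle x,\Delta_0(y)\rangle$, which for $y=x$ is merely a skew-hermitian --- not zero --- element of $Z(pMp)$. Third, as you yourself flag, Lemma~\ref{skewone} cannot be applied pointwise to the local derivation $D_{p,x}$: the condition $D_{p,x}(p)=\Delta_0(p)=0$ only makes $D_{p,x}$ preserve the Peirce spaces; it does not force $D_{p,x}|_{\mathcal{A}_2(p)}\equiv 0$ (e.g.\ $L_a-R_a$ with $a\in pMp$ skew-hermitian and non-central in $pMp$ kills $p$ but not $pMp$). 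That obstacle is real, and it is precisely why the paper works only at the trace level.

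The correct endgame is shorter than what you attempt and is already contained in your ``full polarized identity'', which moreover holds for $\Delta$ itself with no preliminary subtraction of $D_1$: Lemma~\ref{ccc} applies to an \emph{arbitrary} triple derivation, performing the decomposition \eqref{deco} internally, so for each pair $x,y$ one applies it to $D_{x,y}$ and gets $\langle\Delta(x),y\rangle=-\langle x,\Delta(y)\rangle$ directly. Then for any $x,y,z\in\mathcal{A}$,
$$\langle\Delta(x+y)-\Delta(x)-\Delta(y),z\rangle=-\langle x+y,\Delta(z)\rangle+\langle x,\Delta(z)\rangle+\langle y,\Delta(z)\rangle=0,$$
and \emph{now} non-degeneracy is applied to the correct vector, yielding $\Delta(x+y)=\Delta(x)+\Delta(y)$. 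Additivity plus the automatic homogeneity of 2-local triple derivations makes $\Delta$ linear, hence a local triple derivation, hence a triple derivation by \cite[Theorem 2.8]{BurPolPerBLMS14}. That is exactly the paper's proof.
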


\begin{proof}
Let us first  show that
$$
\langle \Delta(x), y\rangle = -\langle x, \Delta(y)\rangle
$$
for all $x, y\in \mathcal{A}.$\smallskip

Take a triple derivation $D$ on $\mathcal{A}$ such that
\begin{center}
$\Delta(x)=D(x)$ and $\Delta(y)=D(y).$
\end{center}
By Lemma~\ref{ccc}, we have
$$
\langle \Delta(x), y\rangle = \langle D(x), y\rangle = -\langle x,
D(y)\rangle=-\langle x, \Delta(y)\rangle.
$$

Let now $x,y,z$ be arbitrary elements in $\mathcal{A}.$ Then
\begin{eqnarray*}
\langle \Delta(x+y), z\rangle  & = & -\langle x+y,
\Delta(z)\rangle =-
\langle x, \Delta(z)\rangle-\langle y, \Delta(z)\rangle=\\
& = & \langle \Delta(x), z\rangle+\langle \Delta(y), z\rangle=
\langle \Delta(x)+\Delta(y), z\rangle,
\end{eqnarray*}
i.e.
\begin{eqnarray*}
\langle \Delta(x+y)-\Delta(x)-\Delta(y), z\rangle=0.
\end{eqnarray*}
Since $z$ is an arbitrary and the sesquilinear  form is
non-degenerate it follows that $\Delta(x+y)=\Delta(x)+\Delta(y)$, so $\Delta$ is additive, hence linear, hence a triple derivation by \cite[Theorem 2.8]{BurPolPerBLMS14} (compare the proof of Theorem~\ref{prop}).
\end{proof}

\subsection{General case}

We need the following two Lemmata.

\begin{lemma}\label{module-homo}
Let $D$  be  a triple derivation on $pM.$ Then $D$ is
$\mathcal{P}(Z(M))$-homogeneous, i.e.
$$
D(cx)=cD(x)
$$
for any central projection  $c\in \mathcal{P}(Z(M))$ and $x\in pM.$
\end{lemma}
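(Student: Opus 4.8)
The plan is to reduce the statement to a known derivation-type identity by exploiting the fact that a central projection $c\in\mathcal{P}(Z(M))$ is itself a tripotent with a transparent Peirce structure relative to $pM$. First I would observe that $c$ acts on $\mathcal{A}=pM$ as a Peirce projection: since $c$ is central, $\{c,c,x\}=cx=xc$ for every $x\in pM$, so $pM$ decomposes as $cpM\oplus(p-cp)M=\mathcal{A}_2(c)\oplus\mathcal{A}_0(c)$ with $\mathcal{A}_1(c)=0$. (Here I am viewing $c$ as the tripotent $cp$ inside $\mathcal{A}$, or equivalently noting $cx=\{c,c,x\}$ directly.) Thus proving $D(cx)=cD(x)$ amounts to showing that $D$ leaves the Peirce-$2$ and Peirce-$0$ subspaces of this central tripotent invariant, i.e. $D(\mathcal{A}_2(c))\subseteq\mathcal{A}_2(c)$ and $D(\mathcal{A}_0(c))\subseteq\mathcal{A}_0(c)$.

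The key step is then to show $D(c\,p)\in\mathcal{A}_2(c)$ — more precisely that $cD(cp)c=D(cp)$ in the appropriate sense — or to bypass this entirely by a direct computation. Concretely, for $x\in pM$ write $cx=\{c,x,c\}$ (valid because $c$ is a central projection, so $\{c,x,c\}=\tfrac12(cx^*c\cdot\ \text{--- no}$; better: use $cx=\{c,c,x\}$). Apply the triple Leibniz rule to $\{c,c,x\}$:
\begin{equation*}
D(cx)=D(\{c,c,x\})=\{D(c),c,x\}+\{c,D(c),x\}+\{c,c,D(x)\}.
\end{equation*}
The last term is $cD(x)$, which is what we want, so it remains to see that the first two terms cancel. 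Using $2\{D(c),c,x\}=D(c)c^*x+xc^*D(c)=D(c)cx+xcD(c)$ and similarly $2\{c,D(c),x\}=cD(c)^*x+xD(c)^*c$, the sum of the first two terms is $\tfrac12\big(D(c)cx+xcD(c)+cD(c)^*x+xD(c)^*c\big)$. Now I invoke that $c$ is a tripotent with $D(c)$ constrained by $\{c,c,c\}=c$: differentiating gives $\{D(c),c,c\}+\{c,D(c),c\}+\{c,c,D(c)\}=D(c)$, which after expanding yields the relation $cD(c)c=-\,c\,D(c)^*\,c$ together with $D(c)c+cD(c)=\ldots$; the cleaner route is to note that $c$ central forces $D(c)$ to lie in $\mathcal{A}_1(c)\oplus\mathcal{A}_0(c)$-type positions relative to $c$, so that $cD(c)c=0$ and hence the hermitian combination above collapses to $0$.

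I expect the main obstacle to be making the last cancellation rigorous without hand-waving: one must pin down exactly where $D(c)$ sits. The cleanest argument I would actually write is this. For a central projection $c$, the map $\pi_c\colon x\mapsto cx$ on $pM$ is a triple homomorphism onto the ideal $cpM$, and $pM=cpM\oplus(1-c)pM$ as an $\ell^\infty$-direct sum of JBW$^*$-triples; a triple derivation of a direct sum respects each summand because the summands are precisely the closed ideals, and a triple derivation always maps an ideal into itself (apply the triple Leibniz identity to $\{J,E,E\}\subseteq J$). Hence $D(cpM)\subseteq cpM$ and $D((1-c)pM)\subseteq(1-c)pM$, and then for $x\in pM$, writing $x=cx+(1-c)x$, we get $D(cx)=cD(cx)$ and also $cD(x)=cD(cx)+cD((1-c)x)=cD(cx)+0$, giving $D(cx)=cD(x)$. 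The one fact needing a line of verification is that a triple derivation maps an ideal into itself, which is immediate from the triple Leibniz identity and bilinearity; everything else is the standard direct-sum decomposition of JBW$^*$-triples already recalled in Section \ref{sec:intro}.
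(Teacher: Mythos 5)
Your final argument (the one you say you would actually write) is correct, but it is genuinely different from the paper's. The paper proves the lemma by a direct two-way computation: it expands $c\{x,D(cy),z\}$ with the triple Leibniz rule both before and after inserting $c$ into the outer slot, cancels the common terms to get $\{cD(x),y,z\}=\{cD(cx),y,z\}$ for all $y,z$, concludes $cD(x)=cD(cx)$ by non-degeneracy of the triple product, and then symmetrizes in $c$ and $\mathbf{1}-c$ to assemble $D(cx)=cD(x)$. You instead observe that $cpM$ and $(\mathbf{1}-c)pM$ are weak$^*$-closed ideals of $pM$ (immediate from centrality of $c$), invoke the fact that a triple derivation maps a closed ideal into itself (via the cube factorization $j=\{k,k,k\}$ with $k$ in the ideal, plus $\{E,E,J\}+\{E,J,E\}\subseteq J$), and then read off $D(cx)=cD(cx)=cD(x)$ from the direct-sum decomposition. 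Your route is shorter and more conceptual, and it leans on exactly the ideal-invariance fact the paper itself uses without proof just before Theorem~\ref{continuous}; the cost is that it imports the nontrivial cube-root factorization for elements of closed ideals, whereas the paper's computation is entirely self-contained and elementary. One caution: your first attempted route should be dropped entirely, not just deprecated. The central projection $c$ need not lie in $pM$, so $D(c)$ is undefined; and even replacing $c$ by the tripotent $cp\in pM$, one has $\{cp,cp,x\}=\tfrac12(cx+cxp)$, which equals $cx$ only when $xp=x$ --- the Peirce-$1$ space of $cp$ in $pM$ contains $cpM(\mathbf{1}-p)$ and is generally nonzero, so the identity $cx=\{c,c,x\}$ on which that computation rests is false. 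Since you discard that route in favor of the ideal argument, the proposal as submitted stands.
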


\begin{proof}  Let $c\in \mathcal{P}(Z(M)).$
Take $x,y,z\in pM.$  We have
\begin{eqnarray*}
c\{ x, D(c y),z\}  & = & c D\left(\{x, c y, z\}\right)-c \{D(x),
c y, z\}-c \{x, c y, D(z)\} =\\
& = & c D\left(\{x, c y,z\}\right)-c \{c D(x), y, z\}-c \{x, y,
D(z)\}
\end{eqnarray*}
and
\begin{eqnarray*}
c\{x, D(c y), z\}  & = & c \{c x, D(c y), z\} = \\
& = & c D\left(\{c x, c
y,z\}\right)-c \{D(c x),c y,z\}-c \{c x, c y, D(z)\} =\\
& = & c D\left(\{x, c y, z\}\right)-c \{D(c x), y, z\}-c \{x, y,
D(z)\}.
\end{eqnarray*}
Thus $ c\{c D(x), y, z\}=c\{D(c x), y, z\}.$ Since $c$ is a
central projection we obtain  that
$$
\{c D(x), y, z\}=\{c D(c x), y, z\}.
$$
Since $y,z$ are arbitrary, it follows that
\begin{equation}\label{cc}
cD(x)=cD(c x).
\end{equation}
Thus
$$
cD((\mathbf{1}-c) x)=0.
$$

Replacing $c$ by $\mathbf{1}-c$ in the last equality we obtain
that
\begin{equation}\label{eq ccc}
(\mathbf{1}-c)D(c x)=0.
\end{equation}
Thus
\begin{eqnarray*}
D(c x)  & = & (c+(\mathbf{1}-c))D(c x)=cD(c x)+(\mathbf{1}-c)D(c
x)\stackrel{\eqref{eq ccc}}=cD(c x)\stackrel{\eqref{cc}}=c D(x).
\end{eqnarray*}
The proof is complete.
\end{proof}

\begin{lemma}\label{module-homog}
Let $\Delta$  be  a 2-local triple derivation on $pM.$ Then
$\Delta$ is $\mathcal{P}(Z(M))$-homogeneous, i.e.
$$
\Delta (cx)=c\Delta (x)
$$
for any central projection  $c\in \mathcal{P}(Z(M))$ and $x\in pM.$
\end{lemma}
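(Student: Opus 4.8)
The plan is to reduce the statement for the (possibly nonlinear, noncontinuous) 2-local triple derivation $\Delta$ to the corresponding statement for genuine triple derivations, which is precisely Lemma~\ref{module-homo}. The only tool needed beyond that lemma is the very definition of a 2-local triple derivation.

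First I would fix an element $x \in pM$ and a central projection $c \in \mathcal{P}(Z(M))$, and consider the pair of points $x$ and $cx$ in $pM$. By the definition of 2-local triple derivation, there is a triple derivation $D := D_{x, cx}$ on $pM$ with $\Delta(x) = D(x)$ and $\Delta(cx) = D(cx)$. Now I would invoke Lemma~\ref{module-homo}, which asserts that every (linear) triple derivation on $pM$ is $\mathcal{P}(Z(M))$-homogeneous; applied to $D$ and the central projection $c$ this gives $D(cx) = c\, D(x)$. Combining the three displayed equalities yields
\[
\Delta(cx) = D(cx) = c\, D(x) = c\, \Delta(x),
\]
which is exactly the desired conclusion, and since $x$ and $c$ were arbitrary the proof is complete.

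There is essentially no obstacle here: the content is entirely carried by Lemma~\ref{module-homo}, and the passage from triple derivations to 2-local triple derivations is the standard "freeze a derivation on the pair $\{x, cx\}$" trick already used repeatedly in this section (for instance in Lemma~\ref{zerotrip} and Lemma~\ref{restr}). The one point worth checking is merely bookkeeping, namely that $D_{x,cx}$ is a triple derivation on the same space $pM$ to which Lemma~\ref{module-homo} applies, which is immediate from the definition. I would therefore present the argument in a single short paragraph.
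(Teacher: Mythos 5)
Your argument is correct and is precisely the proof given in the paper: fix the pair $(cx,x)$, take the triple derivation $D_{cx,x}$ witnessing the 2-local property at these two points, and apply Lemma~\ref{module-homo} to obtain $\Delta(cx)=D_{cx,x}(cx)=c\,D_{cx,x}(x)=c\,\Delta(x)$. There is nothing to add.
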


\begin{proof}  Let $c\in \mathcal{P}(Z(M))$ and  $x\in pM.$ Let $D_{cx,x}: pM \to pM$ be a triple derivation satisfying $\Delta (cx) = D_{cx,x} (cx)$ and $\Delta (x) = D_{cx,x} (x)$.  By Lemma~\ref{module-homo}, we have
\begin{eqnarray*}
\Delta(c x)  & = &  D_{cx, x}(c x
)= c D_{cx, x}(x )=c \Delta(x).
\end{eqnarray*}
\end{proof}

Now we are in position to prove Theorem~\ref{triple-rectangular}.

\begin{proof}[\textit{Proof of Theorem~\ref{triple-rectangular}}]

Let $M$ be a von Neumann algebra, $p$ be  a projection in $M$ and
$\Delta$ be a 2-local triple derivation  on the $JBW^\ast$-triple
$\mathcal{A}=pM.$\smallskip

Take mutually orthogonal central projections $z_1$  and $z_2$ in
$M$ with  $z_1+z_2=\mathbf{1}$  such that  $z_1p$ is finite and
$z_2p$ is properly infinite. Lemma~\ref{module-homog} implies that
$\Delta$ maps each $z_i\mathcal{A}$ into itself and hence induces
a 2-local triple derivation  $\Delta_i=\Delta|_{z_i \mathcal{A}}$
on $z_i\mathcal{A}=z_ipM$  for $i=1,2.$
Theorems~\ref{prop},~\ref{finite} imply that both $\Delta_1$ and
$\Delta_2$ are triple derivations. Since
$$
\Delta(x)=z_1\Delta(x)+z_2\Delta(x)=\Delta_1(z_1 x)+\Delta_2(z_2
x)
$$
for all $x\in \mathcal{A},$ it follows that $\Delta$ is also a
triple derivation. The proof is complete.
\end{proof}

A Cartan factor of type 1 is the JBW$^*$-triple $B(H,K)$ of all bounded operators from a Hilbert space $H$ to a Hilbert space $K$.  We thus have:

\begin{corollary}\label{Cartan}
Every 2-local triple derivation on a Cartan factor of type 1 is a triple derivation.
\end{corollary}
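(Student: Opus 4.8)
The plan is to derive Corollary~\ref{Cartan} directly from Theorem~\ref{triple-rectangular} by exhibiting a Cartan factor of type $1$ as a $JBW^\ast$-triple of the form $pM$ for a suitable von Neumann algebra $M$ and projection $p$. Recall that a type $1$ Cartan factor is $B(H,K)$, the space of bounded linear operators from a Hilbert space $H$ to a Hilbert space $K$, with triple product $\{x,y,z\} = \tfrac12(xy^\ast z + zy^\ast x)$.

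First I would form the Hilbert space direct sum $\mathcal{H} = K \oplus H$ and let $M = B(\mathcal{H})$, which is a von Neumann algebra (a type $I$ factor, in fact). Write the elements of $M$ as $2\times 2$ operator matrices with respect to the decomposition $\mathcal{H}=K\oplus H$, and let $p\in M$ be the orthogonal projection of $\mathcal{H}$ onto the first summand $K$. Then the right-ideal $pM$ consists precisely of those operator matrices whose bottom row vanishes, i.e. matrices of the form $\begin{pmatrix} a & b \\ 0 & 0\end{pmatrix}$ with $a\in B(K)$ and $b\in B(H,K)$. This is visibly not yet a copy of $B(H,K)$; to cut down to the $(1,2)$-corner one instead takes the compression $p M q$ where $q$ is the projection of $\mathcal{H}$ onto the second summand $H$, which gives exactly the matrices $\begin{pmatrix} 0 & b\\ 0 & 0\end{pmatrix}$, $b\in B(H,K)$, and the induced triple product $\{x,y,z\}=\tfrac12(xy^\ast z+zy^\ast x)$ restricts to the Cartan triple product on $B(H,K)$. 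The cleanest route, then, is to note that $pMq$ with its triple structure is triple-isomorphic (indeed isometrically) to $B(H,K)$, and to observe that $pMq$ is itself of the form ``projection times von Neumann algebra'': either directly as a corner, or by first absorbing $q$, realizing $B(H,K)\cong \tilde p \widetilde M$ where $\widetilde M = B(H\oplus K)$ and $\tilde p$ is the projection onto $K$ inside $H\oplus K$. In either normalization the point is that a type $1$ Cartan factor is, up to triple isomorphism, a $JBW^\ast$-triple of the shape $pM$ covered by Theorem~\ref{triple-rectangular}.

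Next I would invoke the transfer principle already used several times in this section: if $\varphi: E_1 \to E_2$ is a triple isomorphism and $\Delta$ is a $2$-local triple derivation on $E_2$, then $\varphi^{-1}\circ \Delta\circ \varphi$ is a $2$-local triple derivation on $E_1$, and it is a triple derivation on $E_1$ if and only if $\Delta$ is a triple derivation on $E_2$ (conjugate each local witness $D_{x,y}$ by $\varphi$). Thus, given a $2$-local triple derivation on $B(H,K)$, transport it along the isomorphism $B(H,K)\cong pM$ to get a $2$-local triple derivation on $pM$, apply Theorem~\ref{triple-rectangular} to conclude it is a triple derivation, and transport back. This yields the corollary.

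I do not expect any genuine obstacle here; the only thing requiring a moment's care is the bookkeeping of the matrix corners, since $pM$ (with the full first row) is strictly larger than $B(H,K)$, so one must make sure to use the two-sided corner $pMq$ (or, equivalently, re-choose the ambient Hilbert space so that the relevant row is the whole space), and then verify that the abstract triple product on this corner agrees with the concrete Cartan product $\tfrac12(xy^\ast z + zy^\ast x)$ — which is immediate from the matrix multiplication together with $p x^\ast p = x^\ast$ for $x$ in the corner. Once the identification $B(H,K)\cong pM$ (in the appropriate normalization) is in place, Theorem~\ref{triple-rectangular} does all the work.
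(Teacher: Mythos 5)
Your overall strategy --- realize the type $1$ Cartan factor $B(H,K)$ as a right ideal $pM$ of a von Neumann algebra and then quote Theorem~\ref{triple-rectangular} together with the transfer principle for triple isomorphisms (which you state correctly) --- is exactly what the paper intends; the corollary appears there with no written proof beyond that observation. However, the concrete identification you propose is wrong. The right ideal $\tilde p\,\widetilde M$ with $\widetilde M=B(H\oplus K)$ and $\tilde p$ the projection onto $K$ equals $B(H\oplus K,\,K)$, which decomposes as $B(H,K)\oplus B(K)$ and is strictly larger than $B(H,K)$ whenever $K\neq 0$ (for $H=K=\mathbb{C}$ it is $\mathbb{C}^{2}$ versus $\mathbb{C}$). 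So ``absorbing $q$'' reproduces the very defect you flagged for $pM$ with $M=B(K\oplus H)$, and the two-sided corner $pMq$ with $p\perp q$ is not itself a right ideal of $M$, hence not literally of the form covered by Theorem~\ref{triple-rectangular}. Nor can you apply the theorem to the larger triple $B(K\oplus H,K)$ and restrict, since a $2$-local triple derivation on the corner has no evident extension to the whole right ideal.

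The gap is easily repaired. If $\dim K\leq\dim H$, choose an isometry $v:K\to H$ and put $p=vv^{*}\in B(H)$; then $x\mapsto vx$ is a linear bijection from $B(H,K)$ onto $pB(H)$ and, using $v^{*}v=1_{K}$, one checks $v\{x,y,z\}=\{vx,vy,vz\}$, so it is a triple isomorphism and $B(H,K)\cong pM$ with $M=B(H)$. If $\dim K>\dim H$, first apply the transpose $x\mapsto x^{t}$, a linear triple isomorphism of $B(H,K)$ onto $B(\overline{K},\overline{H})$, to reduce to the previous case. With this corrected identification your transfer argument goes through verbatim and yields the corollary.
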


\section{Boundedness of completely additive measures on continuous JW*-algebras}\label{susect: Dorofeev}

In this section we shall establish one of the  main results of this note, namely a Jordan version of  Dorofeev's boundedness theorem (compare \cite[Theorem 29.5]{Shers2008} or \cite[Theorem 1]{Doro}). The latter states that any completely additive signed measure on the projections of a continuous von Neumann algebra is bounded. \smallskip

 Theorem~\ref{t Jordan Dorofeev-Sherstnev} provides the key tool for the proof of Theorem~\ref{t continuous type 2}, which together with Theorem~\ref{triple-rectangular} leads to the second main conclusion of this note in Theorem~\ref{continuous}, namely, that a 2-local triple derivation on a continuous JBW*-triple is a triple derivation.\smallskip

Assume that $M$ is a continuous
von Neumann algebra and $\beta: M\to M$ is a $\mathbb{C}$-linear
$^*$-involution (i.e. a $^*$-antiautomorphism of order 2). The subspace $H(M,\beta)$, of all $\beta$-fixed
points in $M$, is not, in general, a subalgebra of $M$. However,
$H(M,\beta)$ is a weak$^*$ closed Jordan $^*$-subalgebra of $M$,
whenever the latter is equipped with its natural Jordan product
$$\displaystyle x\circ y := \frac12 (x y + y x).$$ In particular,
the self-adjoint part, $H(M,\beta)_{sa}$, of $H(M,\beta)$ is a
JBW-subalgebra of $M_{sa}$.

\begin{theorem}\label{t Jordan Dorofeev-Sherstnev} Let $M$ be a continuous von Neumann algebra and let $\beta: M\to M$ be a $\mathbb{C}$-linear $^*$-involution.
Let $\Delta: \mathcal{P}(H(M,\beta))\to \mathbb{C}$ be a completely additive (complex) measure. Then $\Delta$ is bounded.
\end{theorem}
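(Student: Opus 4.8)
The plan is to run, in the Jordan setting, the strategy of Dorofeev's boundedness theorem for continuous von Neumann algebras (\cite[Theorem 1]{Doro}, \cite[Theorem 29.5]{Shers2008}), using the comparison theory of projections of the JW-algebra $H(M,\beta)$ in place of that of a von Neumann algebra. A tempting shortcut --- extend $\Delta$ to a completely additive measure on $\mathcal{P}(M)$ and quote the classical theorem for the continuous von Neumann algebra $M$ --- is \emph{not} available, since $\mathcal{P}(H(M,\beta))$ is strictly smaller than $\mathcal{P}(M)$ and there is no canonical way to assign a value to $\Delta$ on projections of $M$ that are not fixed by $\beta$; one is forced to argue intrinsically inside $H(M,\beta)$.

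First I would reduce and record the two consequences of complete additivity that drive everything. Splitting $\Delta$ into real and imaginary parts, each of which is again completely additive, it suffices to treat a real-valued $\Delta:\mathcal{P}(H(M,\beta))\to\mathbb{R}$. Now two facts are used repeatedly. (i) For projections $e,f\in H(M,\beta)$ the Jordan relation $e\circ f=0$ is equivalent to genuine orthogonality $ef=fe=0$; hence for any sequence $(e_n)$ of mutually orthogonal projections in $H(M,\beta)$ the series $\sum_n\Delta(e_n)$ converges unconditionally, so absolutely, giving $\sum_n|\Delta(e_n)|<\infty$ and in particular $\Delta(e_n)\to 0$. (ii) If $(e_n)$ decreases with $\bigwedge_n e_n=0$, then applying (i) to the orthogonal family $(e_n-e_{n+1})_n$, whose sum is $e_1$, yields $\Delta(e_n)\to 0$ ("continuity from above at $0$"). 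Consequently the theorem follows once we show that, were $\Delta$ unbounded, one could construct either an infinite orthogonal family of projections of $H(M,\beta)$ with $\inf_n|\Delta(e_n)|>0$, contradicting (i), or a sequence of projections decreasing to $0$ with the same lower bound, contradicting (ii).

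So assume $\Delta$ unbounded and set $b(e)=\sup\{|\Delta(f)|:f\in\mathcal{P}(H(M,\beta)),\ f\le e\}$, a monotone set function with $b(\mathbf{1})=+\infty$. The core is a \emph{localization of unboundedness}: one peels off projections of arbitrarily large $\Delta$-value while keeping, orthogonally to them, a projection on which $\Delta$ is still unbounded, and iterates. Here the structure theory enters, mirroring the use of the type decomposition in Dorofeev's proof. Since $M$ is continuous, $H(M,\beta)$ is a continuous JBW-algebra with no type $I$ summand; being a JW-algebra it satisfies the halving lemma, has a comparison theory for projections implemented by $\beta$-fixed Jordan symmetries, carries a faithful center-valued trace on its finite part, and on its properly infinite part admits an orthogonal sequence of mutually exchangeable projections summing to $\mathbf{1}$. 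One first rules out a blow-up of $b$ along an infinite orthogonal family of central projections (that would immediately produce an orthogonal family contradicting (i)), which together with continuity reduces matters to handling the finite and properly infinite summands separately, and within each to a homogeneous (or factor) situation. On the properly infinite part one transports a projection of large $\Delta$-value, by a $\beta$-fixed partial symmetry, into a projection that is "small" within the exchangeable system, leaves an orthogonal remainder carrying the unboundedness, and repeats --- a gliding-hump construction producing the orthogonal family contradicting (i). On the finite part one runs the same recursion but measures "size" by the center-valued trace: repeated halving, passing each time to the half on which $\Delta$ remains unbounded, yields a decreasing sequence of projections with traces $\to 0$ and $|\Delta|$ bounded below, the contradiction now coming from (ii).

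The main obstacle is precisely this localization step in the finite (type $II_1$) case: the absence of properly infinite projections removes the room exploited in the infinite case, so the recursion must be controlled entirely by the trace and one must show it never stalls; this is the genuinely Dorofeev-type part of the argument and the only place real work is needed. A related, largely bookkeeping, burden is to confirm that the comparison machinery used freely for von Neumann algebras --- the halving lemma, exchange of equivalent projections, existence and additivity of a faithful center-valued trace --- is available inside $H(M,\beta)$ and compatible with the ambient involution $\beta$; and it should be recorded separately (using the structure theory of JW-algebras) that $H(M,\beta)$ inherits from $M$ the absence of a type $I$ summand, since on type $I_n$ pieces completely additive measures can genuinely be unbounded.
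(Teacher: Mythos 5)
There is a genuine gap, and it sits exactly where you locate ``the only place real work is needed.'' Your plan is to iterate a localization: peel off a projection of large $|\Delta|$-value while keeping an orthogonal remainder on which $\Delta$ is still unbounded, and in the finite case to drive this by repeated halving, ``passing each time to the half on which $\Delta$ remains unbounded.'' But unboundedness does not localize to halves: if $e=e_1+e_2$, a projection $p\le e$ need not split as a sum of projections under $e_1$ and $e_2$, so $\Delta$ can be bounded on both $\mathcal{P}(e_1Me_1)$ and $\mathcal{P}(e_2Me_2)$ yet a priori unbounded on $\mathcal{P}(eMe)$ because of projections living in the ``off-diagonal'' corner. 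Moreover, the peeling recursion does stall --- that is precisely the content of property \eqref{property (1)} in Proposition~\ref{proposition l4 Dorofeev Jordan or L 6.1.9 in Hamhalter}: after finitely many steps one reaches a corner where every projection of large value has a complement on which $\Delta$ is bounded, and the real argument only begins there. The paper overcomes the non-localization obstacle by first invoking Matve\u{\i}chuk's Jordan Mackey--Gleason theorem (Theorem~\ref{t Jordan Mackey-Gleason}) to replace $\Delta$, on each corner where it is known to be bounded, by a bounded \emph{linear} functional $\varphi$, and then estimating $|\varphi(p)|\le\sum_{i,j}\tfrac12|\varphi(q_ipq_j+q_jpq_i)|\le n^2\cdot 2K$ via the decomposition $p=\sum_{i,j}\{q_i,p,q_j\}$ (Lemma~\ref{lemma 6 Dorofeev or 6.1.13 Ham}); this step has no analogue in your outline and cannot be replaced by additivity of the measure alone.

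Two further ingredients of the actual proof are absent from your sketch and are not cosmetic. First, the ``size'' of projections is not measured by a center-valued trace on the finite part of $H(M,\beta)$ together with an ad hoc gliding hump on the properly infinite part; instead one constructs a $\beta$-invariant type II$_1$ von Neumann subalgebra $N\subseteq M$ (Proposition~\ref{conjecture subalgebra of type II1}, via the group von Neumann algebra of the finitary permutation group inside a real-flip factor), whose scalar faithful finite trace $\tau$ controls smallness uniformly in all cases II$_1$, II$_\infty$ and III --- your scheme has no trace available in the type III summand. Second, the final contradiction requires the metric separation machinery: the distance $d(p,q)$ between projections, the construction of a sequence $(p_n)$ with $|\Delta(p_n)|>n$ and $d(p_n,\vee_{i\neq n}p_i)\ge\frac18$, and Dorofeev's estimate $p_1\vee\cdots\vee p_n\le\sum_k\varepsilon^{-k}p_k$, which is what lets one bound $\varphi(\vee_n p_n)$ and contradict property \eqref{eq property(3)}. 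Your proposed contradiction with simple complete-additivity facts (an orthogonal family, or a chain decreasing to $0$, with $|\Delta|$ bounded below) is not what the construction ultimately delivers, and without the separation estimates one cannot control the supremum $\vee_n p_n$ at all.
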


 The authors do not know if Theorem~\ref{t Jordan Dorofeev-Sherstnev} remains valid when $H(M,\beta)$ is replaced by an arbitrary JBW$^*$-algebra containing no summands of type $I_n$. See Problem~\ref{5.10}.  However,
Theorem~\ref{t Jordan Dorofeev-Sherstnev} is sufficient for the purposes of this paper.\smallskip

We shall show how the arguments in \cite{Doro} can be adapted to prove the above result. For completeness reasons, we shall present here a draft of the original arguments employed in the proof of \cite[Theorem 1]{Doro}, making the adjustments, some of which are non-trivial, for the Jordan case.
The proof of Theorem~\ref{t Jordan Dorofeev-Sherstnev} will occupy us throughout this section.\smallskip

The following Jordan version of the Bunce-Wright-Mackey-Gleason theorem is an instance of a theorem due to Matve\u{i}chuk and has been borrowed from \cite{Matv}

\begin{theorem}\label{t Jordan Mackey-Gleason}{\rm \cite[Theorem 1]{Matv}} Let $M$ be a continuous von Neumann algebra and let $\beta: M\to M$ be a $\mathbb{C}$-linear $^*$-involution. Let $\Delta: \mathcal{P}(H(M,\beta))\to \mathbb{C}$ be a bounded finitely additive measure. Then there exists a functional $\varphi$ in $H(M,\beta)^*$ such that $\Delta (p) =\varphi(p)$, for every $p\in \mathcal{P}(H(M,\beta))$. Furthermore, when $\Delta$ is completely additive the functional $\varphi$ can be assumed to be in $H(M,\beta)_*$. $\hfill\Box$
\end{theorem}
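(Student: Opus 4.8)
The plan is to argue by contradiction, following Dorofeev's strategy but carrying out the entire construction inside the projection lattice $\mathcal{P}(H(M,\beta))$. Note first that Theorem~\ref{t Jordan Mackey-Gleason} is not available as a shortcut, since it presupposes boundedness; boundedness is exactly what we must establish. For a projection $e\in\mathcal{P}(H(M,\beta))$ set $V(e)=\sup\{|\Delta(q)| : q\le e,\ q\in\mathcal{P}(H(M,\beta))\}$. This quantity is monotone in $e$, and $\Delta$ is bounded precisely when $V(\mathbf{1})<\infty$. Assume, for contradiction, that $V(\mathbf{1})=\infty$. The goal is to produce a sequence $(f_n)$ of mutually orthogonal projections in $\mathcal{P}(H(M,\beta))$ with $|\Delta(f_n)|\to\infty$. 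Once this is done we are finished: the supremum $f=\sup_n f_n$ again lies in $\mathcal{P}(H(M,\beta))$ because $H(M,\beta)$ is weak$^*$ closed and $\beta$ preserves suprema of orthogonal projections, so complete additivity forces $\sum_n\Delta(f_n)$, together with all of its subseries, to converge to $\Delta(f)$; hence $\Delta(f_n)\to 0$, contradicting $|\Delta(f_n)|\to\infty$.

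The engine of the construction is a splitting lemma: if $V(e)=\infty$ then $e$ admits an orthogonal decomposition $e=e_1+e_2$ in $\mathcal{P}(H(M,\beta))$ with $V(e_1)=V(e_2)=\infty$. Granting this, the sequence is built by a simple induction. Starting from $e^{(0)}=\mathbf{1}$ with $V(e^{(0)})=\infty$, and having produced $e^{(n-1)}$ with $V(e^{(n-1)})=\infty$, split it as $p+e^{(n)}$ with both parts of infinite $V$; since $V(p)=\infty$ there is $f_n\le p$ with $|\Delta(f_n)|>n$, and we retain $e^{(n)}$ for the next step. By construction $f_n\le p\perp e^{(n)}\ge e^{(n+1)}\ge f_{n+1}$, so the $f_n$ are mutually orthogonal and $|\Delta(f_n)|\to\infty$, as required.

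To prove the splitting lemma I would use the continuity of the JBW-algebra $H(M,\beta)$, inherited from $M$: every nonzero $\beta$-invariant projection halves into two orthogonal, Jordan-equivalent $\beta$-invariant subprojections, and, more generally, continuity supplies the comparison theory and the symmetries needed below. The delicate point is that a subprojection $q\le e$ need not respect a given decomposition $e=a+b$, so $V(e)=\infty$ does not obviously localise to $a$ or to $b$. This is overcome as in \cite{Doro}: choosing $a\sim b$ via a Jordan partial isometry, one uses the associated symmetry to rotate an arbitrary $q\le e$ into projections that do split along $a$ and $b$, thereby estimating $|\Delta(q)|$ by a fixed number of terms involving $V(a)$, $V(b)$ and $|\Delta(e)|$ up to controlled error. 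If $V(b)<\infty$ this forces $V(a)=\infty$, and relabelling yields the desired doubly-infinite splitting. It is here that continuity (no type $I$ part, hence enough equivalent halvings and symmetries) is indispensable.

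I expect the main obstacle to be precisely this rotation step in the Jordan setting. In the von Neumann case the rotations are implemented by unitaries of $M$ acting on $2\times 2$ blocks; here every projection, partial isometry and symmetry must be taken inside $H(M,\beta)$, that is, compatible with $\beta$, and the block computations controlling $\Delta(q)$ must be recast for the Jordan product $x\circ y=\tfrac12(xy+yx)$ and for $\beta$-fixed points. Verifying that the continuous JBW-algebra $H(M,\beta)$ carries enough $\beta$-adapted equivalent halvings and symmetries to run Dorofeev's estimate, and that his finitely many error terms remain finite under the hypothesis $V(b)<\infty$, is the technical heart and the step flagged as a non-trivial adjustment. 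The remaining ingredients, namely monotonicity of $V$, the convergence argument from complete additivity, and $\beta$-invariance of suprema, are routine.
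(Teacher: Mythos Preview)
Your proposal does not address the stated theorem. The statement is the Jordan version of the Bunce--Wright--Mackey--Gleason extension theorem: given a \emph{bounded} finitely additive measure $\Delta$ on $\mathcal{P}(H(M,\beta))$, produce a bounded linear functional $\varphi\in H(M,\beta)^*$ extending it (and a normal functional when $\Delta$ is completely additive). Your entire argument is instead aimed at proving that a completely additive measure is automatically bounded---that is Theorem~\ref{t Jordan Dorofeev-Sherstnev}, not the present statement. You even say so explicitly: ``boundedness is exactly what we must establish.'' But here boundedness is a \emph{hypothesis}, not the conclusion.

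Furthermore, the paper does not supply its own proof of this theorem; it is quoted verbatim from Matve\u{\i}chuk \cite{Matv} and marked with a terminal box. So there is no in-paper argument to compare against. If you wish to sketch a proof, the relevant content is the extension of a bounded measure on the projection lattice of the JBW$^*$-algebra $H(M,\beta)$ to a linear functional, which requires the Jordan Gleason machinery (spectral approximation of self-adjoint elements by projections, plus a linearity argument handling the Jordan structure), not the Dorofeev-style contradiction you outline. Your splitting-lemma construction, whatever its merits, is simply irrelevant to the statement at hand.
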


Suppose that $M$ acts on a complex Hilbert space $H$. Following \cite{Doro}, given two projections $p,q\in \mathcal{P} (M)$, the distance between $p$ and $q$ is defined by $$d(p,q) = \inf\{\|\xi-\eta\|: \xi\in p(H), \eta\in q(H), \|\xi \|= \|\eta\|=1\}.$$ Let us take $\xi\in p(H), \eta\in q(H)$ with $\|\xi \|= \|\eta\|=1$.   In this case $$\|\xi-\eta \| \geq \|\xi-q(\xi) \|-\|q(\xi-\eta)\| \geq \|\xi-q(\xi) \|-\|\xi-\eta\|,$$ which gives $2 \|\xi-\eta\| \geq \|\xi-q(\xi) \|$. Therefore $$2 \|\xi-\eta\| \geq \inf \{ \|\zeta-q(\zeta) \| : \zeta\in p(H), \|\zeta \|=1\},$$ and thus \begin{equation}\label{eq inque distance projections} d(p,q) \geq \frac12 \inf \{ \|\zeta-q(\zeta) \| : \zeta\in p(H), \|\zeta \|=1\}.
\end{equation}

Following standard notation, given two projections $p,q$ in a von Neumann algebra $M$, the symbols $p\vee q$ and $p\wedge q$ will denote the supremum and the infimum of $p$ and $q$ in $M$, respectively. Let $\beta$ be a $\mathbb{C}$-linear $^*$-involution on $M$. It is clear that $\beta(\mathbf{1}) =\mathbf{1}$. Furthermore, $\beta (p\vee q) = \beta (p)\vee \beta(q)$ and $\beta(p\wedge q)= \beta(p)\wedge \beta(q)$. So, if $p,q\in H(M,\beta)$, then $p\vee q$ and $p\wedge q$ both belong to $H(M,\beta)$. Having these comments in mind, the arguments in the proof of \cite[Lemma 2]{Doro} can be slightly adapted  to obtain:

\begin{lemma}\label{l 2 Dorofeev in Jordan} Let $M$ be a continuous von Neumann algebra and let $\beta: M\to M$ be a $\mathbb{C}$-linear $^*$-involution.
Let $\Delta: \mathcal{P}(H(M,\beta))\to \mathbb{C}$ be a completely additive measure. Suppose there exists a constant $C>0$ and an increasing sequence $(q_n)$ of projections in $H(M,\beta)$ such that $(q_n)\uparrow \mathbf{1}$ and  $$\sup \{ |\Delta(q)| : q\in \mathcal{P}(H(M,\beta)) : q \leq q_n\} \leq C,$$ for every natural $n$. Then $\Delta$ is bounded.
\end{lemma}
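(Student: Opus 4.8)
The plan is to argue by contradiction along the lines of Dorofeev's original argument, exploiting the hypothesis that the restriction of $\Delta$ to each order ideal below $q_n$ is uniformly bounded, together with the fact that $H(M,\beta)$ is continuous (so that projections can be halved and ``spread out''). Suppose $\Delta$ is unbounded. Then there is a sequence of projections $(p_k)\subseteq \mathcal{P}(H(M,\beta))$ with $|\Delta(p_k)|\to\infty$. The idea is to replace $(p_k)$ by a sequence of projections that are ``almost orthogonal'' to a tail of the increasing sequence $(q_n)$, so that a single completely additive evaluation on an orthogonal family forces boundedness, a contradiction. Concretely, for each $k$ one wants to find $n=n(k)$ and a projection $r_k\le \mathbf{1}-q_{n(k)}$ (or something close to it in the metric $d(\cdot,\cdot)$) with $|\Delta(r_k)|$ still large: the point of hypothesis is that since everything below $q_n$ contributes at most $C$ in absolute value, and $|\Delta(p_k)|$ is huge, the ``mass'' of $p_k$ must essentially live above $q_n$ for every fixed $n$, i.e. $p_k\wedge(\mathbf{1}-q_n)$, or a perturbation thereof, must carry large $\Delta$-value.

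The key steps, in order, are: (1) Using $(q_n)\uparrow\mathbf{1}$ and finite additivity, write $\Delta(p) = \Delta(p\wedge q_n) + (\text{correction})$ and estimate the correction term via the distance inequality \eqref{eq inque distance projections}; here one uses that $p\wedge q_n$ and $p\vee q_n$ lie in $H(M,\beta)$ because $\beta$ preserves $\vee$ and $\wedge$, as noted just before the lemma. (2) Deduce that for the unbounded sequence $(p_k)$, after passing to a subsequence, one can extract projections $r_k\in\mathcal{P}(H(M,\beta))$ with $\sum_k r_k\le\mathbf{1}$ (mutual orthogonality, obtained by a diagonal/gap argument choosing the indices $n(k)$ increasing fast enough so that the relevant pieces sit in the mutually orthogonal ``shells'' $q_{n(k+1)}-q_{n(k)}$ up to controllable error), while $|\Delta(r_k)|\to\infty$. (3) Apply complete additivity of $\Delta$ to the orthogonal family $\{r_k\}$: since $\sum_k \Delta(r_k)$ must converge (being $\Delta$ of a projection $\le\mathbf{1}$, summed over any finite subfamily and bounded by $|\Delta(\sum_{k\in F} r_k)|$ plus tails), its terms tend to $0$, contradicting $|\Delta(r_k)|\to\infty$. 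Hence $\Delta$ is bounded.

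The main obstacle I anticipate is step (1)–(2): controlling the ``correction'' term $\Delta(p) - \Delta(p\wedge q_n)$ and converting a statement about $d(p_k, q_n)\to 1$ (i.e. $p_k$ becoming nearly orthogonal to $q_n$) into an \emph{honest} orthogonal decomposition with projections genuinely in $H(M,\beta)$ carrying the large values. In the von Neumann algebra case Dorofeev can pass to subprojections and use comparison theory freely; in the Jordan setting one must stay inside $H(M,\beta)$, so each time a projection is produced (a halving, a spectral cut-off applied to $p_k - q_n$, an infimum or supremum) one must check it is $\beta$-fixed. The remarks preceding the lemma (that $\vee$, $\wedge$ and $\mathbf{1}-(\cdot)$ preserve $H(M,\beta)$) handle the lattice operations; the delicate point will be the norm-perturbation argument that replaces ``nearly orthogonal'' by ``orthogonal'' while keeping $\Delta$-values large, which should follow from \eqref{eq inque distance projections} together with a standard projection-perturbation lemma applied within the JBW-algebra $H(M,\beta)$. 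Once the orthogonal family is in hand, step (3) is immediate from complete additivity.
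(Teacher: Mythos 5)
There is a genuine gap, and it sits exactly where you anticipated trouble: step (2). Your plan is to localize the large $\Delta$-values into the mutually orthogonal ``shells'' $q_{n(k+1)}-q_{n(k)}$, but any projection dominated by such a shell is in particular dominated by $q_{n(k+1)}$, so the hypothesis of the lemma forces its $\Delta$-value to be at most $C$ in modulus. The construction is therefore self-defeating: no orthogonal family with $|\Delta(r_k)|\to\infty$ can be produced by cutting down to the shells. The fallback you mention --- replacing ``nearly orthogonal'' by ``orthogonal'' via a norm-perturbation lemma --- also cannot work, because $\Delta$ is only a (completely) additive set function on projections; it is not assumed continuous, and boundedness is precisely what is being proved. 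Two projections at small norm distance need not be comparable in the lattice, so nothing controls the difference of their $\Delta$-values. All control on $\Delta$ must come from order and orthogonality relations (finite additivity plus monotone continuity along increasing or decreasing sequences, the latter being a consequence of complete additivity), never from norm estimates.

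The paper's proof avoids contradiction and orthogonal families altogether: it bounds $|\Delta(p)|$ by $4+2C$ directly for an arbitrary projection $p$. Writing $p_n=\mathbf{1}-q_n\downarrow 0$, one first gets $|\Delta(r\vee p_n)|\leq 1+C$ for every $r$ (since $r\vee p_n-p_n\leq q_n$), then sets $q=p+(\mathbf{1}-p)\wedge p_1$ and uses Dorofeev's Remark 1 to decompose $q=r(qp_1q)+q\wedge(\mathbf{1}-p_1)$ orthogonally, reducing the problem to bounding $\Delta$ on the range projection $r(qp_1q)$. That range projection is approximated from below by the spectral projections $G_n=1_{(0,1-\frac1n)}(qp_1q)$, and the chosen $G=G_{n_1}$ satisfies $d(G,p_n)\geq\frac{1}{2\sqrt{n_1}}$; Dorofeev's Lemma 1$(b)$ then gives the operator inequality $G\vee p_n\leq 64 n_1(G+p_n)$, which forces $G\vee p_n\downarrow G$, and monotone continuity of $\Delta$ transfers the bound $|\Delta(G\vee p_{n_2})|\leq 1+C$ to $|\Delta(G)|$ and hence to $|\Delta(p)|$. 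This is the mechanism that converts ``separated from $p_n$'' into usable information about $\Delta$ --- an order-theoretic squeeze followed by monotone continuity --- and it is the ingredient your sketch would need to supply in place of the shell decomposition and the norm perturbation.
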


\begin{proof} Let us observe that $\Delta$ being a completely additive  measure implies that for every increasing (respectively, decreasing) sequence $(r_n)$ in $\mathcal{P}(H(M,\beta))$ with $(r_n)\uparrow r$ (respectively, $(r_n)\downarrow r$), where $r\in \mathcal{P}(H(M,\beta))$, then $\Delta (r_n) \to \Delta (r)$.\smallskip

We shall show that the set $\{|\Delta(p)| : p\in \mathcal{P}(H(M,\beta))\}$ is bounded. Let us fix $p\in \mathcal{P}(H(M,\beta))$. Since $(p_n) = (\mathbf{1}-q_n) \downarrow 0$ and $p_n\wedge (\mathbf{1}-p) \leq p_n$, we deduce that $(|\Delta(p_n)|)$, and $(|\Delta(p_n\wedge (\mathbf{1}-p))|)$ tend to $0$. We can therefore assume that $|\Delta(p_n)|, |\Delta(p_n\wedge (\mathbf{1}-p))| \leq 1$, for each natural $n$.\smallskip

We claim that for each natural $n$ and every projection $r\in \mathcal{P}(H(M,\beta))$ we have \begin{equation}\label{eq maximum bounded} |\Delta (r\vee p_n)| \leq 1+C.
\end{equation} Indeed, since $r\vee p_n\geq p_n$, it follows that $r\vee p_n = r\vee p_n -p_n + p_n$ with $r\vee p_n-p_n \perp p_n$. Therefore $\Delta(r\vee p_n) = \Delta(r\vee p_n -p_n)  + \Delta(p_n).$ Since $p_n (r\vee p_n -p_n) = 0 = (r\vee p_n -p_n) p_n$, we deduce that $r\vee p_n -p_n \leq \mathbf{1}- p_n = q_n$. It follows from the assumptions that $|\Delta(r\vee p_n)| \leq |\Delta(r\vee p_n -p_n)|  + |\Delta(p_n)|\leq C+1,$ as desired.\smallskip

With $p$ as above, let us denote $q= p + (\mathbf{1}-p) \wedge p_1$. It is easy to check that $(\mathbf{1}-q) \wedge p_1=0$, therefore Remark 1 in \cite{Doro} proves that $q = r (q p_1 q) + q\wedge (\mathbf{1}-p_1)$ with $r (q p_1 q) \perp q\wedge (\mathbf{1}-p_1)$. Since $p \perp (\mathbf{1}-p) \wedge p_1$, we deduce from the finite additivity of $\Delta$ that $\Delta (q) = \Delta(p) + \Delta((\mathbf{1}-p) \wedge p_1)$ and $\Delta(q)= \Delta (r (q p_1 q)) +\Delta( q\wedge (\mathbf{1}-p_1))$, and hence $$|\Delta (p) | \leq  |\Delta (q) | + |\Delta((\mathbf{1}-p) \wedge p_1)| \leq |\Delta (r (q p_1 q))| + |\Delta (q\wedge (\mathbf{1}-p_1)) |  + 1$$ $$\leq |\Delta (r (q p_1 q))| + C   + 1.$$

The sequence $(G_n)= (1_{(0,1-\frac1n)} (q p_1 q))\subseteq  \mathcal{P}(H(M,\beta))\}$ grows to the range projection $r (q p_1 q)$. We deduce that $(\Delta (G_n))\uparrow \Delta (r (q p_1 q))$, and thus, there exists $n_1\in \mathbb{N}$ such that $|\Delta(G_{n_1}) - \Delta (r (q p_1 q))|<1,$ and consequently, \begin{equation}\label{eq inequ G and range} |\Delta (p) | \leq 2+C + |\Delta(G_{n_1})|.
\end{equation}

We claim that $G= G_{n_1}$ is ``separated'' from $p_1$ in the sense of \cite{Doro}, that is, $d(p_1,G_{n_1})>0$. Considering the von Neumann subalgebra generated by the element $qp_1 q$ and the functional calculus it is easy to see that $qp_1 q \leq (1-\frac{1}{n_1}) G + 1_{[1-\frac{1}{n_1},1]} (q p_1 q)$, with $(1-\frac{1}{n_1}) G \perp 1_{[1-\frac{1}{n_1},1]} (q p_1 q)$. Then for each normal state $\varphi\in M_*$ with $\varphi (G) = 1 = \|\varphi\|$, we have $\varphi (q p_1 q)\leq 1-\frac{1}{n_1}$. Consequently, for each $\xi\in G(H)$ with $\|\xi\|=1$, we have $\langle qp_1 q (\xi) / \xi \rangle \leq 1-\frac{1}{n_1}.$ Having in mind that $G\leq r(qp_1 q) \leq q$, we deduce that $q(\xi) = \xi$, and hence $\langle p_1  (\xi) / \xi \rangle \leq 1-\frac{1}{n_1},$ for every $\xi$ as above. This shows that $\|\xi -p_1 (\xi) \|\geq \frac{1}{\sqrt{n_1}}$, for every $\xi$ satisfying the above conditions. The inequality in \eqref{eq inque distance projections} shows that $d(G,p_1)\geq \frac{1}{2 \sqrt{n_1}}$, proving that $G$ is separated from $p_1$. Therefore, $d(G,p_n)\geq \frac{1}{2 \sqrt{n_1}}$, for every $n\in \mathbb{N}$. Lemma 1$(b)$ in \cite{Doro} shows that $$G\vee p_n \leq \frac{16}{d(G,p_n)^2} (G+p_n)\leq 64 n_1 (G+p_n),$$ for every natural $n$. We deduce that $\displaystyle \lim_{n} G\vee p_n \leq \lim_{n}  64 n_1 (G+p_n)= 64 n_1 G$, which implies that $G\vee p_n\downarrow G$. We can find $n_2\in \mathbb{N}$ satisfying $|\Delta(G)|\leq 1+ |\Delta(G\vee p_{n_2})|$. Combining \eqref{eq inequ G and range} and \eqref{eq maximum bounded} we obtain $$|\Delta(p) | \leq 3+ C + |\Delta(G\vee p_{n_2})| \leq 4+ 2 C.$$ The conclusion of the lemma follows from the arbitrariness of $p$.
\end{proof}

The following result for projections in von Neumann algebras is part of the folklore (cf. \cite[Lemma 3]{Doro} or \cite[Lemma 6.1.10]{Ham03}. Let us observe that in the latter results the normal state should have been assumed to be faithful).  By using the halving lemma for JBW-algebras the same proof holds in the case of JBW$^*$-algebras.

\begin{lemma}\label{l 3 Jordan} Let $M$ be a continuous von Neumann algebra and let $\beta: M\to M$ be a $\mathbb{C}$-linear $^*$-involution. Suppose $p$ is a projection in $H(M,\beta),$ $\varphi$ is a faithful normal state in $H(M,\beta)_*$ and $0<\delta<1$. Then there exists a family of pairwise orthogonal projections $(p_i)_{i=1,\ldots,n}$ in $H(M,\beta)$ satisfying:\begin{enumerate}[$(a)$] \item $p= \sum_{i=1}^{n} p_i;$
\item $\varphi (p_i) \leq \delta,$ for every $i=1,\ldots, n;$
\item $n\leq 2/\delta.$ $\hfill\Box$
\end{enumerate}
\end{lemma}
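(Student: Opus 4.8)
The plan is to reduce the statement to an \emph{intermediate value property} for the faithful normal state $\varphi$ on the projection lattice of $H(M,\beta)$: for any projection $r$ and any $0\le t\le \varphi(r)$ there is a subprojection $q\le r$ with $\varphi(q)=t$. Granting this, the required family is produced by peeling off subprojections of measure \emph{exactly} $\delta$. The halving lemma for JBW-algebras enters only to establish non-atomicity. Concretely, I would first record a small-piece sublemma: every nonzero projection $r\in \mathcal{P}(H(M,\beta))$ admits, for each $\varepsilon>0$, a nonzero subprojection $e\le r$ with $\varphi(e)<\varepsilon$. Since $H(M,\beta)$ is a continuous JBW-algebra, the halving lemma splits $r$ into two nonzero projections; iterating $k$ times produces $2^k$ mutually orthogonal nonzero subprojections summing to $r$, so the smallest of them has $\varphi$-measure at most $\varphi(r)/2^k<\varepsilon$ for $k$ large, and faithfulness of $\varphi$ forces this piece to be nonzero with positive measure.

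Next I would prove the intermediate value property by a maximality argument. Fix $r$ and $0\le t\le\varphi(r)$ and consider $P=\{q\in\mathcal{P}(H(M,\beta)): q\le r,\ \varphi(q)\le t\}$, ordered by $\le$. Any chain in $P$ has a supremum in $\mathcal{P}(H(M,\beta))$ lying below $r$, and by normality of $\varphi$ (order continuity on increasing nets) this supremum still has measure $\le t$, so it belongs to $P$; Zorn's lemma then yields a maximal element $q^*\in P$. If $\varphi(q^*)<t$, then $r-q^*$ is nonzero, and the small-piece sublemma provides $e\le r-q^*$ with $0<\varphi(e)<t-\varphi(q^*)$; since $e\perp q^*$, the element $q^*+e$ is a projection in $P$ strictly exceeding $q^*$, a contradiction. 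Hence $\varphi(q^*)=t$, which is the desired subprojection.

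Finally I would carry out the peeling. If $\varphi(p)\le\delta$, take $n=1$ and $p_1=p$; then $1\le 2/\delta$ since $\delta<1$. Otherwise set $r_0=p$ and, while $\varphi(r_k)>\delta$, use the intermediate value property to pick $p_{k+1}\le r_k$ with $\varphi(p_{k+1})=\delta$ and put $r_{k+1}=r_k-p_{k+1}$, a projection since $p_{k+1}\le r_k$. Each step lowers the measure by $\delta$, so the process stops after some $m$ steps with $m\delta\le\varphi(p)\le 1$, i.e. $m\le 1/\delta$, and the remainder $r_m$ (discarded when it is zero, by faithfulness) satisfies $\varphi(r_m)\le\delta$. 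Condition $(a)$ holds by construction, $(b)$ because every piece has measure $\delta$ or $\le\delta$, and $(c)$ because $n\le m+1\le 1/\delta+1\le 2/\delta$, the last inequality again using $\delta<1$.

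I expect the intermediate value property to be the main obstacle. The halving lemma produces only \emph{equivalent} halves, whose $\varphi$-measures need not coincide, so one cannot directly bisect the measure; the content of the argument is that halving nonetheless forces non-atomicity, and it is the combination of the small-piece sublemma with the normality of $\varphi$ (via Zorn) that upgrades this to the existence of subprojections of exactly prescribed measure. It is precisely the exactness of the $\delta$-pieces, rather than merely pieces of measure $\le\delta$, that keeps the count at $m\le 1/\delta$ and so absorbs the single final remainder within the bound $2/\delta$.
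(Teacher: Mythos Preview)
Your argument is correct and is precisely the kind of proof the paper has in mind: the paper does not spell out a proof but simply remarks that the von Neumann algebra argument (Dorofeev's Lemma~3, or Lemma~6.1.10 in Hamhalter's book) carries over verbatim once one replaces the von Neumann halving lemma by the halving lemma for JBW-algebras. Your small-piece sublemma plus the Zorn/normality maximality argument is exactly the standard route to the intermediate value (Darboux) property for $\varphi$ on $\mathcal{P}(H(M,\beta))$, and the peeling procedure with pieces of measure exactly $\delta$ is what produces the bound $n\le 2/\delta$; there is nothing to add. The only point you leave implicit---that $H(M,\beta)$ is itself a continuous JBW$^*$-algebra when $M$ is continuous, so that the halving lemma applies to \emph{every} nonzero projection---is also left implicit in the paper, and follows from the fact that $H(M,\beta)_{sa}$ is the hermitian part of the real von Neumann algebra $R(M,\beta)$ with complexification $M$.
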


The following result is a crucial point in the proof of the main theorem of this section.

\begin{proposition}\label{conjecture subalgebra of type II1} Let $W$ be a JW-algebra containing no finite Type I part. Then $W$ contains a JW-subalgebra $B$ of Type II$_1$. Furthermore, if $M$ is a (properly infinite) continuous von Neumann algebra and $\beta: M\to M$ is a $\mathbb{C}$-linear $^*$-involution, then there exists a type II$_1$ von Neumann subalgebra $N$ of $M$ satisfying $\beta(N)=N$.
\end{proposition}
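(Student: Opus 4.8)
The plan is to establish the first assertion using the structure theory of JBW-algebras, then reduce the von Neumann algebra statement to it by a fixed-point averaging argument. For the first part, I would invoke the classification of JBW-algebras (or rather JW-algebras, as in \cite{HancheStor}): a JW-algebra with no finite Type $I$ summand decomposes into a part of Type $I_\infty$, a part of Type $II_1$, a part of Type $II_\infty$, and a part of Type $III$. If the Type $II_1$ summand is present we are done immediately, so the work is to produce a $II_1$ subalgebra inside each of the remaining three types. For the Type $II_\infty$ case, one cuts down by a finite projection $e$ with central support $\mathbf{1}$ so that $e W e$ is Type $II_1$. For the Type $I_\infty$ and Type $III$ cases the idea is the familiar matrix-unit construction: find an infinite orthogonal family of pairwise equivalent projections summing (in the strong sense) to a projection, use the partial symmetries implementing the equivalences to build a copy of the self-adjoint $2^k\times 2^k$ matrix algebras (or the hyperfinite $II_1$ factor) sitting inside $W$; the halving lemma for JBW-algebras, already used in Lemma~\ref{l 3 Jordan}, is exactly the tool that lets one iterate the halving and obtain the dyadic system of projections.

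For the second assertion I would work with the $\beta$-fixed point JW-algebra $H(M,\beta)$. Since $M$ is properly infinite and continuous, $H(M,\beta)$ is a continuous JW-algebra with no finite Type $I$ part (a finite Type $I$ summand of $H(M,\beta)$ would force a corresponding finite Type $I$ piece of $M$, contradicting continuity of $M$), so by the first part $H(M,\beta)$ contains a JW-subalgebra $B$ of Type $II_1$. The point is now to manufacture, from $B$, a genuinely associative $\beta$-invariant von Neumann subalgebra of $M$ of Type $II_1$. I would take $B$ to be the hyperfinite Type $II_1$ JW-factor, concretely realized as the (self-adjoint part of the) norm/weak$^*$ closure of an increasing union of self-adjoint $2^k\times2^k$ matrix subalgebras built as above. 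Each such finite-dimensional Jordan matrix algebra is the self-adjoint part of an associative matrix algebra $M_{2^k}(\mathbb{C})$ sitting inside $M$; letting $N$ be the von Neumann subalgebra of $M$ generated by all these associative matrix algebras, $N$ is the hyperfinite $II_1$ factor. Because the generating Jordan system lies in $H(M,\beta)$ and $\beta$ is a $\mathbb{C}$-linear $^*$-antiautomorphism of order $2$, $\beta$ fixes each matrix unit $e_{ij}$ of $M_{2^k}(\mathbb{C})$ up to the transpose; choosing the matrix units from the Jordan structure so that $\beta$ acts as the (Jordan-compatible) transpose, one checks $\beta(N)=N$.

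The key steps in order are: (1) recall the Type decomposition of a JW-algebra with no finite Type $I$ part; (2) in each of the Types $I_\infty$, $II_\infty$, $III$, build a Type $II_1$ JW-subalgebra — by cutting with a finite projection in the $II_\infty$ case, and by a dyadic matrix-unit/halving-lemma construction giving a copy of the hyperfinite $II_1$ JW-factor in the $I_\infty$ and $III$ cases; (3) for the second statement, observe $H(M,\beta)$ is a JW-algebra with no finite Type $I$ part and apply step (2) to get $B\subseteq H(M,\beta)$ of Type $II_1$, taken hyperfinite; (4) let $N$ be the von Neumann subalgebra of $M$ generated by the associative finite-dimensional matrix algebras underlying $B$, so $N$ is Type $II_1$ (hyperfinite); (5) verify $\beta(N)=N$ from $\beta$-invariance of the generators.

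The main obstacle I expect is step (5) together with the compatibility demanded in step (4): one must build the matrix units of the associative algebras $M_{2^k}(\mathbb{C})$ out of elements of $H(M,\beta)$ in such a way that the associative algebra they generate is $\beta$-invariant — a priori the product of two $\beta$-fixed elements need not be $\beta$-fixed, and $\beta$ being an \emph{anti}automorphism is precisely what makes $\beta$ carry the associative algebra generated by $\{e_{ij}\}$ to the one generated by $\{\beta(e_{ij})\}=\{e_{ji}^{t}\text{-type elements}\}$, so one needs the Jordan-built matrix units to be symmetric under this transpose. This can be arranged by first producing a system of orthogonal $\beta$-fixed projections via the Jordan halving lemma and then choosing, for each equivalence, a $\beta$-fixed symmetry implementing it; the off-diagonal matrix units are then built symmetrically and $\beta$ restricts to the transpose antiautomorphism on each $M_{2^k}(\mathbb{C})$, hence on their weak$^*$-closure $N$.
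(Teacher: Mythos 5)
Your overall strategy (produce a Type II$_1$ Jordan subalgebra, then pass to a $\beta$-invariant associative algebra) is the right shape, and your treatment of the Type II$_\infty$ case by cutting with a modular projection agrees with what the paper does. But there is a genuine gap at the heart of your construction for the Type I$_\infty$ and Type III cases, and it propagates into step (4). You build a dyadic system of matrix units by iterated halving and then declare that the von Neumann subalgebra $N$ of $M$ generated by the resulting increasing union of $2^k\times 2^k$ matrix algebras ``is the hyperfinite II$_1$ factor.'' That is false in general: the weak$^*$ closure of a UHF-type subalgebra inherits its type from the ambient representation, not from the finite-dimensional approximants. In $B(H)$ the CAR algebra can act irreducibly, so the generated von Neumann algebra is all of $B(H)$ (Type I$_\infty$); in a Powers factor it is the whole Type III factor. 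Nothing in the halving construction produces a faithful normal trace on the weak closure, and the existence of such a trace is precisely the non-trivial content of the statement. The paper's Lemma~\ref{l real flip} is the device that fills this hole: it realizes the II$_1$ algebra concretely as the group von Neumann algebra $\mathcal{L}_\Pi$ of the ICC group of finitary permutations acting on $\ell^2(\Pi)$, which is a II$_1$ factor for representation-theoretic reasons and is invariant under the real flip since $\alpha(u_t)=u_t^*$; this is then transported into $W$ through the normal embedding of $B(H)_{sa}^\alpha$ supplied by \cite[Theorem 7.6.3]{HancheStor}. Without some such concrete model carrying its own trace, your step (2) does not yield a Type II$_1$ JW-subalgebra and your step (4) does not yield a Type II$_1$ von Neumann subalgebra.

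Two further points. First, your reduction of the second assertion to the first via the claim that $H(M,\beta)$ has no finite Type I part ``because a finite Type I summand would force a finite Type I piece of $M$'' is asserted, not proved; a central projection of $H(M,\beta)$ need not be central in $M$, so the implication requires an argument. The paper instead shows directly that $H(M,\beta)_{sa}$ contains a non-modular \emph{central} projection, using the real von Neumann algebra $R(M,\beta)$ and Ayupov's results tying modularity of $zH(M,\beta)_{sa}z$ to finiteness of $zMz$. Second, for the $\beta$-invariance of $N$ the paper does not need any delicate choice of matrix units compatible with a transpose: it uses that $H(N,\alpha)_{sa}$ is \emph{not} the self-adjoint part of a von Neumann algebra and that its enveloping von Neumann algebra is exactly $N$ (Ayupov--Rakhimov--Usmanov), so that $\beta$, being a $^*$-antiautomorphism fixing the generating set $H(N,\alpha)_{sa}$ pointwise, automatically satisfies $\beta(N)=N$. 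Your instinct that the antiautomorphism property is what makes this work is correct, but the clean mechanism is the enveloping-algebra argument rather than a transpose-compatible system of matrix units.
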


The proof of the above proposition will follow from a technical lemma. First, we recall that a real flip $\alpha$ on $B(H)$ is a $^\ast$-antiautomorphism of order 2 given by $$ \alpha(x)= J x^\ast J\,,$$ where $J$ is a conjugation on $H$. In this setting $$B(H)^{\alpha}_{sa} = \left\{ x\in B(H) : \alpha (x)=x=x^\ast\right\}$$ is a Type I JW-algebra factor. Since any two conjugations on the same complex Hilbert space are unitarily equivalent (see \cite[Lemma 7.5.6]{HancheStor}) all factor JW-algebras arising from a real flip on a fixed Hilbert space are isomorphic.

\begin{lemma}\label{l real flip}
Let $\alpha$ be a real flip on $B(H)$, where $H$ is a separable and infinite dimensional complex Hilbert space.
Then there exists a factor von Neumann algebra $N$ of type II$_1$, such that $N$ is an $\alpha$-invariant subalgebra of $B(H)$. In particular, $H(N,\alpha)_{sa}=\{x\in N : \alpha (x)=x^*=x \}$ is a finite Type II$_1$ JW-factor contained in $B(H)_{sa}^\alpha$. Moreover, $H(N,\alpha)_{sa}$ is not isomorphic to the self-adjoint part of a von Neumann algebra and the enveloping von Neumann algebra of $H(N,\alpha)_{sa}$ coincides with $N$.
%$B(H)_{sa}^\alpha$ contains a unital Type II$_1$ factor.
\end{lemma}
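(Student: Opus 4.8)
The plan is to construct $N$ explicitly as the weak$^*$-closed algebra generated by a hyperfinite II$_1$ factor built out of matrix units that are symmetric under $\alpha$. Fix an orthonormal basis $(e_k)_{k\in\mathbb{Z}}$ of $H$ such that the conjugation $J$ acts by $J(\sum \lambda_k e_k)=\sum \overline{\lambda_k} e_k$; this is possible since all conjugations on a fixed separable $H$ are unitarily equivalent. With respect to this basis, $\alpha(x) = Jx^*J$ is simply the transpose map $x\mapsto x^t$ (entrywise conjugate of the adjoint), so the matrix units $E_{k\ell}$ satisfy $\alpha(E_{k\ell})=E_{\ell k}$. The idea is then to realise the hyperfinite II$_1$ factor $R$ as an infinite tensor product $\bigotimes_{j=1}^\infty M_2(\mathbb{C})$ acting on $H$ via a fixed identification of $H$ with $\bigotimes_{j=1}^\infty \mathbb{C}^2$, chosen so that the conjugation $J$ is the infinite tensor product of the standard conjugation on each $\mathbb{C}^2$. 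Then on each tensor leg $\alpha$ acts as the transpose on $M_2(\mathbb{C})$, so $\alpha$ restricts to a period-2 $^*$-antiautomorphism of $R$, i.e. $\alpha(R)=R$, and $N:=R''=R$ is the desired II$_1$ factor.

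The key steps, in order, would be: (1) set up the tensor-product model $H\cong\bigotimes_{j}(\mathbb{C}^2, J_0)$ with $J=\bigotimes_j J_0$, and verify that the standard hyperfinite construction $R=\overline{\bigcup_n M_{2^n}(\mathbb{C})}^{\,w^*}$ sits inside $B(H)$ with each finite-stage algebra $\alpha$-invariant, so $\alpha(R)=R$; (2) note that $R$ is a type II$_1$ factor by the standard theory of the hyperfinite factor (the normalized traces on the $M_{2^n}$ are compatible and give a faithful normal tracial state on $R$); (3) conclude that $H(N,\alpha)_{sa}=\{x\in N:\alpha(x)=x=x^*\}$ is a JW-subalgebra of $B(H)_{sa}^\alpha$ which is a type II$_1$ JW-factor — here one uses that the fixed-point set of a period-2 $^*$-antiautomorphism of a type II$_1$ von Neumann factor is a JBW-factor of the same type (its center is the $\alpha$-fixed part of the center $\mathbb{C}1$, hence trivial, and the trace restricts to a faithful normal trace, ruling out types I$_n$ and III); (4) for the ``moreover'' part, observe that $N$ is generated as a von Neumann algebra by $H(N,\alpha)_{sa}$ because $N$ is generated by the self-adjoint matrix units $E_{kk}$ and the symmetric/antisymmetric combinations $E_{k\ell}+E_{\ell k}$ (symmetric) together with $i(E_{k\ell}-E_{\ell k})$ — and one checks that enough of these lie in $H(N,\alpha)_{sa}$, in fact the symmetric Jordan product already recovers all of $N$ since products of symmetric elements generate the full algebra; (5) deduce that $H(N,\alpha)_{sa}$ is \emph{not} isomorphic to the self-adjoint part of any von Neumann algebra, because a reversible JW-factor of type II$_1$ whose enveloping algebra is a II$_1$ \emph{factor} (not $M\oplus M^{op}$) cannot be Jordan-isomorphic to $M_{sa}$ for a von Neumann algebra $M$ — this is the structural fact that distinguishes $H(N,\alpha)$ from $N_{sa}$ and from $pN_{sa}p$-type pieces.

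For the passage from the lemma to Proposition~\ref{conjecture subalgebra of type II1}: given a JW-algebra $W$ with no finite type I part, one reduces to the case where $W=H(M,\beta)_{sa}$ for a properly infinite continuous von Neumann algebra $M$ with $\beta$ a $\mathbb{C}$-linear $^*$-involution (using the structure theory already quoted in the introduction, $\mathcal{A}=H(W,\alpha)\oplus pV$, together with the reduction that a continuous algebra contains a copy of $B(H)$ with $H$ separable infinite-dimensional on which $\beta$ restricts to a real flip up to the required adjustments); then one applies the lemma inside that copy of $B(H)$ to extract $N$ of type II$_1$ with $\beta(N)=N$, and sets $B=H(N,\beta)_{sa}\subseteq W$.

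The main obstacle I anticipate is step (5) — proving that $H(N,\alpha)_{sa}$ is not the self-adjoint part of a von Neumann algebra and pinning down its enveloping von Neumann algebra exactly as $N$ rather than $N\oplus N^{op}$. This requires knowing that $\alpha$ is a genuinely ``non-trivial'' involution (an exchange-type involution would give the wrong answer) and invoking the classification of reversible versus exceptional JW-factors (Størmer's theory: a JW-factor is either the self-adjoint part of a von Neumann factor, or the Hermitian part $H(M,\beta)$ of a von Neumann factor under a $^*$-antiautomorphism of order 2, or the $10$-dimensional spin exceptional factor). One must verify that the transpose on the hyperfinite II$_1$ factor is not inner-conjugate to the identity times a swap, so that $H(R,\mathrm{transpose})$ falls in the second, genuinely reversible-but-not-special class, and that its universal enveloping von Neumann algebra is $R$ itself. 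This is where the separability and the concrete tensor-product model pay off: the transpose on $\bigotimes M_2$ is manifestly not of ``exchange'' type since it preserves each tensor leg individually rather than permuting a two-fold tensor product.
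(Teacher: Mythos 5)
Your construction is a genuinely different route from the paper's. You realise the $\alpha$-invariant II$_1$ factor as the hyperfinite factor $R=\bigl(\bigcup_n M_{2^n}(\mathbb{C})\bigr)''$ in a tensor-product model $H\cong\bigotimes_j\mathbb{C}^2$ chosen so that $J$ is the coordinatewise conjugation and $\alpha$ is the transpose on each finite stage; the paper instead takes the left group von Neumann algebra $\mathcal{L}_G$ of the group $\Pi$ of finitely supported permutations of $\mathbb{N}$ acting on $\ell^2(\Pi)$, and checks $\alpha(u_t)=u_t^*$ so that $\mathcal{L}_G$ is $\alpha$-invariant. The two objects are of course the same factor up to isomorphism (the ICC group $\Pi$ is locally finite), and both setups legitimately reduce to the standard form of $J$ via the unitary equivalence of conjugations (\cite[Lemma 7.5.6]{HancheStor}). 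Your concrete matrix-unit argument that $H(N,\alpha)_{sa}$ generates $N$ as a von Neumann algebra (from $E_{kk}$ and $E_{k\ell}+E_{\ell k}$ one recovers $E_{k\ell}=E_{kk}(E_{k\ell}+E_{\ell k})E_{\ell\ell}$) is a nice explicit payoff of the tensor model; the paper gets the same conclusion, together with the rest of the ``moreover'' clause, wholesale from Theorem 1.5.2 of \cite{AyuRakUsma97}.

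The one genuine gap is your step (5), and your proposed way of closing it does not work as stated. That the transpose ``manifestly preserves each tensor leg rather than permuting a two-fold tensor product'' is a statement about one concrete presentation of $(N,\alpha)$; it does not rule out an \emph{abstract} Jordan isomorphism of $H(N,\alpha)_{sa}$ onto the self-adjoint part of some von Neumann algebra, which is what the lemma asserts is impossible. The correct way to close this -- and what the paper does -- is to invoke the general theorem (Theorem 1.5.2 in \cite{AyuRakUsma97}, going back to St{\o}rmer's classification of reversible JW-factors) that for \emph{any} continuous von Neumann factor $N$ and \emph{any} $^*$-antiautomorphism $\alpha$ of order $2$, the JW-factor $H(N,\alpha)_{sa}$ is not isomorphic to the self-adjoint part of a von Neumann algebra and has enveloping von Neumann algebra $N$. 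In the continuous factor case no dichotomy on the nature of $\alpha$ is needed: the ``exchange-type'' alternative you worry about can only occur when the enveloping algebra is non-factorial of the form $M\oplus M^{\mathrm{op}}$, and your step (4) already excludes that by showing the enveloping algebra is the factor $N$ itself. With step (5) replaced by this citation (and finiteness by Theorem 1.3.2 of \cite{AyuRakUsma97} or by your trace argument), your proof is complete and correct.
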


\begin{proof}
Let $\Pi$ be the group of all permutations of natural numbers leaving all but finite integers fixed. $\Pi$ is infinite and countable and so we can suppose that $H=\ell^2(\Pi)$. Denote by $\xi_t$ an element in $\ell^2(\Pi)$ that takes value 1 at $t\in \Pi$ and zero otherwise. Then $(\xi_t)_{t\in \Pi}$ forms an orthonormal basis of $H$. By the remark preceding this lemma, there is no loss of generality in assuming that the real flip $\alpha$ is induced by a conjugation $J$ of the form:
\[ J\left(\sum_{t\in \Pi} \alpha_t \xi_t\right)= \sum_{t\in \Pi} \overline{\alpha_t} \xi_t\,,\]
where $(\alpha_t)\in \ell^2(\Pi)$.
Let $\mathcal{L}_G$ be the (left) group von Neumann algebra generated by the unitaries  $u_t$  ($t\in \Pi$), where
\[ u_t \xi_s := \xi_{ts}\,.\]
Since
$$
\alpha(u_t)\xi_s=Ju^*_tJ\xi_s=Ju^*_t\xi_s=Ju_{t^{-1}}\xi_s=\xi_{t^{-1}s}=u_{t^{-1}}\xi_s=u_t^*\xi_s,
$$
 \[ \alpha(u_t)=u_t^\ast\] and consequently, $\mathcal{L}_G$ is $\alpha$-invariant, and hence $\alpha(\mathcal{L}_G)=\mathcal{L}_G$.

By \cite[Example 6.7.7, page 438 and Theorem 6.7.5]{KR86} $\mathcal{L}_G$ is a Type II$_1$ factor (see also \cite[Theorem 6.7.2]{KR86}).

Now, since $\mathcal{L}_G$ is a continuous von Neumann factor, we conclude, by Theorem 1.5.2 in \cite{AyuRakUsma97}, that the algebra $H(\mathcal{L}_G,\alpha)_{sa} :=\{x\in \mathcal{L}_G : \alpha (x)=x=x^\ast \}$ is  a continuous JW-algebra factor which is not isomorphic to the self-adjoint part of a von Neumann algebra and the enveloping von Neumann algebra of $H(\mathcal{L}_G,\alpha)_{sa}$ coincides with $\mathcal{L}_G$.
%(This Theorem says that $H(M,\beta)$ is a continuous JW-algebra factor whenever $M$ is a continuous $W^\ast$-factor and $\beta$ is a$\ast$-antiautomorphism of order 2).
Moreover, Theorem 1.3.2 in \cite{AyuRakUsma97} implies that $H(\mathcal{L}_G,\alpha)$ is finite. %(This Theorem says that a reversible JW-algebra is finite if and only if its enveloping von Neumann algebra is finite.)
\end{proof}

\begin{proof}[Proof of Proposition \ref{conjecture subalgebra of type II1}]
Let us suppose first that $W$ is infinite and homogeneous Type I$_n$, where $n$ is an infinite cardinal number. Then, according to the structure theory (see \cite[Definition 5.3.3$(ii)$]{HancheStor}), we can find an infinite system $(p_j)_{j\in \Lambda}$ of mutually orthogonal abelian projections such that $\sum_j p_j =\mathbf{1}$, the central support projection of each $p_j$ coincides with the unit of $W$ and card$(\Lambda)=n$. We can also conclude that the $p_j$'s are mutually exchangeable by a symmetry (compare \cite[Lemma 5.3.2]{HancheStor}). Clearly, we can restrict to a countable subfamily.
Then, there is a unital JW-subalgebra of $W$ containing $(p_j)$ that is isomorphic to $B(H)_{sa}^\alpha$, where $\alpha$ is a real flip and $H$ has infinite countable dimension (see \cite[Theorem 7.6.3 $(iii)\Leftrightarrow (iv)$]{HancheStor}). The desired conclusion follows, in this case, from Lemma \ref{l real flip}.\smallskip

By \cite[Theorem 16]{To} (alternatively, \cite[Theorem 5.3.5]{HancheStor}) any properly infinite Type I JW-algebra $W$ can be decomposed into a direct sum of infinite homogeneous ones. We can obtain the desired finite type II$_1$ continuous JW-subalgebra $B$ by taking the sum of all type II$_1$ JW-subfactors given by Lemma \ref{l real flip} in the corresponding homogeneous summand. Actually it is enough to consider a non-zero type II$_1$ JW-subfactor in any of the corresponding homogeneous summands.\smallskip

We assume now that $W$ contains no type I part. Let $p$ be a non-zero projection in $W$. If $p$ is modular then $B=\{p,W,p\}$ is a JW-algebra of type II$_1$, which proves the desired statement. If $p$ is not modular, then $\{p,W,p\}$  contains a copy of $B(H)_{sa}^\alpha$, where $H$ is separable and infinite dimensional, and $\alpha$ is a real flip (see Theorem 7.6.3 $(i)\Leftrightarrow (iv)$ in \cite{HancheStor}). Lemma \ref{l real flip} implies the existence of a type II$_1$ JW-subfactor of $B(H)_{sa}^\alpha$. This finishes the proof of the first statement in Proposition \ref{conjecture subalgebra of type II1}.\smallskip

We consider now the second statement in the proposition. Let $M$ be a continuous von Neumann algebra and suppose $\beta: M\to M$ is a $\mathbb{C}$-linear $^*$-involution. We may assume, without loss of generality, that the type II$_1$ part of $M$ is zero. We consider the JW-algebra $H(M,\beta)_{sa} =\{ a\in M : \beta (a) = a=a^* \}$.\smallskip

We claim that $H(M,\beta)_{sa}$ contains a central projection which is not modular. Let $z$ be a central projection in $H(M,\beta)_{sa}$. If $z$ is not modular the claim is obvious, otherwise $zH(M,\beta)_{sa}z$ is modular. Let $R(M,\beta)=\{ x\in M: \beta(x)=x^* \}$. Clearly, $R(M,\beta)$ is a real von Neumann algebra and $H(M,\beta)_{sa} =\{ x\in R(M,\beta): x=x^* \} $ coincides with the hermitian part of $R(M,\beta)$. We also have  $M=R(M,\beta)+i R(M,\beta),$ via $ x=(x+\beta(x^*))/2 +i(x-\beta(x^*))/2i ).$ We observe that $z$ is a projection in $M$ with $\beta (z) = z^* =z$, $zMz$ is $\beta$-invariant, and $zH(M,\beta)_{sa}z = H(zMz,\beta)_{sa} = R(zMz,\beta)_{sa}$. We deduce from Proposition 1.3 in \cite{Ayu1989} that $R(zMz,\beta)$ is finite. Theorem 2.2 in \cite{Ayu1989} implies that $zMz=R(zMz,\beta)+i R(zMz,\beta)$ (and hence $z$) is finite in $M$. Let $c(z)$ denote the central support projection of $z$ in $M$, that is, $c(z)$ is the smallest central projection in $M$ majorizing $z$. Since $\beta$ is a $\mathbb{C}$-linear $^*$-involution, we deduce that $\beta (c(z))=c(\beta(z))=c(z)$, and thus $c(z)$ lies in $H(M,\beta)_{sa}$. Since the type II$_1$ part of $M$ is zero and $M$ is continuous, we deduce that $c(z)$ must be an infinite central projection in $M$ (compare \cite[Definition V.1.17]{Tak}). Thus $c(z)$ must be a non-modular central projection in $H(M,\beta)_{sa}$, which proves the claim. Indeed, if $c(z)H(M,\beta)_{sa}$ were modular, then as shown above, $c(z)M$ would be finite. \smallskip

Finally, let $p$ be a non-modular central projection in $H(M, \beta)_{sa}$. A new application of \cite[Theorem 7.6.3 $(i)\Leftrightarrow (iv)$]{HancheStor} implies that $\{p, H(M, \beta)_{sa}, p\}=U_p \left(H(M, \beta)_{sa}\right)$ contains a copy of $B(H)_{sa}^\alpha$, where $H$ is separable and infinite dimensional, and $\alpha$ is a real flip. By Lemma \ref{l real flip} there exists a von Neumann algebra $N$ of type II$_1$ such that $N$ is an $\alpha$-invariant von Neumann subalgebra of $B(H)$, $H(N,\alpha)_{sa}$ is a JBW-subalgebra of $H(M,\beta)_{sa}$ and the enveloping von Neumann algebra of $H(N,\alpha)_{sa}$ coincides with $N$. Clearly $N$ is a subalgebra of $M$. Since every $x\in H(N,\alpha)_{sa}$ satisfies $\beta (x) =x=x^*$ and $\beta$ is a $\mathbb{C}$-linear $^*$-involution, the enveloping von Neumann algebra of $H(N,\alpha)_{sa}$, namely $N$, must be $\beta$-invariant, which concludes the proof.
\end{proof}

The remaining results in this section are appropriate adaptations of the corresponding lemmas in \cite{Doro} and \cite[\S 6.1]{Ham03}, they are included here for completeness reasons.\smallskip

Let us observe a simple property.

\begin{remark}\label{remark direct sums} Let $M$ be a von Neumann algebra, let $\beta$ be a $\mathbb{C}$-linear $^*$-involution on $M$, and let $\Delta: \mathcal{P}(H(M,\beta))\to \mathbb{C}$ be a completely additive measure. Suppose we can decompose $M$ as finite direct sum of mutually orthogonal $\beta$-invariant von Neumann subalgebras $M_1,\ldots,M_k$, that is $M=M_1 \oplus^{\infty} \ldots \oplus^{\infty} M_k$ with $\beta(M_j)=M_j$, for every $j$. Then $\Delta$ is bounded if and only if $\Delta|_{\mathcal{P}(H(M_j,\beta))}: \mathcal{P}(H(M_j,\beta))\to \mathbb{C}$ is bounded for every $j=1,\dots,k.$
\end{remark}

Let us briefly recall some basic notions on $\sigma$-finite projections in JBW$^*$-algebras. As in the setting of von Neumann algebras, a JBW$^*$-algebra $M$ is said to be \emph{$\sigma$-finite} if every family of mutually orthogonal non-zero projections in $M$ is at
most countable. A projection $p$ in $M$ is called \emph{$\sigma$-finite} if the JBW$^*$-algebra $U_p (M)$ is $\sigma$-finite, where $U_p$ is the operator on $M$ given by $U_p (x) = \{p,x^{*},p\}= 2 (p \circ x)\circ p -p \circ x$. A projection $p$ in $M$ is $\sigma$-finite if and only if it is the support projection of a normal state in $M_*$ (cf. \cite[Theorem 3.2]{EdRu98}). The supremum of countably many $\sigma$-finite projections is again $\sigma$-finite, and every projection in a JBW$^*$-algebra can be written as a sum of mutually orthogonal $\sigma$-finite projections (see \cite[Theorem 3.4]{EdRu98}). These facts can be derived from \cite{EdRu98}  and are explicitly developed in \cite{BHKArXiv}.\smallskip

The following two results will be applied in several arguments (compare \cite[Lemma 4]{Doro}).

\begin{proposition}\label{p restriction to sigma finite projections} Let $M$ be a continuous von Neumann algebra and let $\beta: M\to M$ be a $\mathbb{C}$-linear $^*$-involution. Suppose that $\Delta: \mathcal{P}(H(M,\beta))\to \mathbb{C}$ is an unbounded completely additive  measure. Then there exists a $\sigma$-finite projection $p\in H(M,\beta)$ such that $\Delta|_{\mathcal{P}(H(pMp, \beta))}$ is unbounded.
\end{proposition}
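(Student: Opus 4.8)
The plan is to argue by contradiction. Assume the conclusion fails, i.e.\ that for \emph{every} $\sigma$-finite projection $p\in H(M,\beta)$ the restriction $\Delta|_{\mathcal{P}(H(pMp,\beta))}$ is bounded, and set $K(p):=\sup\{|\Delta(r)|:r\in\mathcal{P}(H(pMp,\beta))\}<\infty$; here $p\in H(M,\beta)$ guarantees that $pMp$ is a $\beta$-invariant von Neumann algebra, so $H(pMp,\beta)$ is meaningful. I would then show (i) that $K:=\sup\{K(p):p\in\mathcal{P}(H(M,\beta))\text{ $\sigma$-finite}\}$ is finite, and (ii) that the finiteness of $K$ already forces $\Delta$ to be bounded, contradicting the hypothesis.

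For step (i), suppose $K=\infty$. Then for each $n$ I can pick a $\sigma$-finite $p_n\in H(M,\beta)$ and a projection $r_n\in\mathcal{P}(H(p_nMp_n,\beta))$ with $|\Delta(r_n)|>n$. Since $r_n\le p_n$ and subprojections of $\sigma$-finite projections are $\sigma$-finite, each $r_n$ is a $\sigma$-finite projection in $H(M,\beta)$. Let $p:=\bigvee_n r_n$; this is again $\sigma$-finite (a countable supremum of $\sigma$-finite projections), and it lies in $H(M,\beta)$ because $\beta$ restricts to an order automorphism of the complete lattice $\mathcal{P}(M)$ (being a $^*$-antiautomorphism of order $2$) and fixes each $r_n$, hence fixes $\bigvee_n r_n$. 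But $r_n\le p$ for all $n$ forces $n<|\Delta(r_n)|\le K(p)<\infty$, which is absurd; so $K<\infty$.

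For step (ii), fix an arbitrary $q\in\mathcal{P}(H(M,\beta))$. By the structure theory for projections in JBW$^*$-algebras recalled above (\cite{EdRu98}), I can write $q=\sum_{i\in I}q_i$ with the $q_i$ mutually orthogonal $\sigma$-finite projections in $H(M,\beta)$; complete additivity of $\Delta$ then gives $\Delta(q)=\sum_{i\in I}\Delta(q_i)$, a sum that, being independent of the ordering, converges unconditionally and hence absolutely. For a finite $F\subseteq I$ the projection $q_F:=\sum_{i\in F}q_i$ is $\sigma$-finite (a finite sum of $\sigma$-finite projections), so finite additivity and the definition of $K$ give $\bigl|\sum_{i\in F}\Delta(q_i)\bigr|=|\Delta(q_F)|\le K(q_F)\le K$. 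Letting $F$ increase to $I$ yields $|\Delta(q)|\le K$, and since $q$ was arbitrary, $\Delta$ is bounded --- contradiction.

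The argument is mostly bookkeeping, and I do not expect a serious obstacle; the two points that need a little care are the $\beta$-invariance of the auxiliary objects ($\bigvee_n r_n$, the corners $q_FMq_F$ and $pMp$), which rests on $\beta$ being a $\mathbb{C}$-linear $^*$-involution, and the observation that complete additivity of a scalar-valued measure makes the defining series absolutely convergent --- this is precisely what allows the uniform bound $K$ on the finite partial sums $\Delta(q_F)$ to pass to a bound on $\Delta(q)$.
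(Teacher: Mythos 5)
Your proof is correct and rests on exactly the same ingredients as the paper's: the decomposition of an arbitrary projection in $H(M,\beta)$ into mutually orthogonal $\sigma$-finite projections (\cite[Theorem 3.4]{EdRu98}), the fact that countable suprema of $\sigma$-finite projections are $\sigma$-finite, and complete additivity to capture $\Delta(q)$ by finite partial sums. The only difference is arrangement: the paper argues directly, taking a sequence $(q_n)$ with $|\Delta(q_n)|\to\infty$, replacing each $q_n$ by a $\sigma$-finite subprojection $p_n$ with $|\Delta(q_n)-\Delta(p_n)|<1/n$, and setting $p=\bigvee_n p_n$, whereas you run the contrapositive through a uniform bound $K$ over all $\sigma$-finite projections --- a cosmetic rather than substantive distinction.
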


\begin{proof}
Since $\Delta$ is unbounded, there exists a sequence $(q_n)$ in $\mathcal{P} (H(M,\beta))$ satisfying that $\displaystyle\lim_{n\to \infty} |\Delta (q_n)| =\infty$. Each $q_n$ can be written as the sum of a family of mutually orthogonal $\sigma$-finite projections in $H(M,\beta)$ (compare \cite[Theorem 3.4 $(ii)$]{EdRu98}). Therefore, there exists a family $(p^{(n)}_{\lambda})_{\lambda\in \Lambda_n}$ of mutually orthogonal $\sigma$-finite projections in $H(M,\beta)$ such that $\displaystyle q_n=\sum_{\lambda\in \Lambda_n} p^{n}_{\lambda}$. By the complete additivity of $\Delta$, there exists a finite subset $F_n\subseteq \Lambda_n$ such that
 $$\left| \Delta (q_n) - \Delta \left( \sum_{\lambda\in F_n} p^{(n)}_{\lambda}\right) \right|<\frac{1}{n}.
 $$
 Clearly, $\displaystyle p_n=\sum_{\lambda\in F_n} p^{n}_{\lambda}$ is a $\sigma$-finite projection in $\mathcal{P} (H(M,\beta))$ and $\displaystyle\lim_{n\to \infty} |\Delta (p_n)| =\infty$. Let $p=\bigvee_{n} p_n \in \mathcal{P} (H(M,\beta))$. Since the supremum of countably many $\sigma$-finite projections is again $\sigma$-finite (compare \cite[Theorem 3.4 $(i)$]{EdRu98}), the projection $p$ is $\sigma$-finite, and obviously, $\Delta|_{\mathcal{P}(H(pMp, \beta))}$ is unbounded, which finishes the proof.
\end{proof}

\begin{proposition}\label{proposition l4 Dorofeev Jordan or L 6.1.9 in Hamhalter} Let $M$ be a von Neumann algebra of type II$_1$, II$_\infty$ or III, and let $\beta$ a $\mathbb{C}$-linear $^*$-involution on $M$. Suppose $\Delta: \mathcal{P}(H(M,\beta))\to \mathbb{C}$ is a completely additive unbounded measure. Then there exists a projection $p_0\in \mathcal{P}(H(M,\beta))$ such that $p_0 M p_0$ is $\sigma$-finite and of type II$_1$, II$_\infty$ or III, and the measure $\Delta_0=\Delta|_{\mathcal{P}(H(p_0 M p_0,\beta))}: \mathcal{P}(H(p_0 M p_0,\beta))\to \mathbb{C}$ satisfies the following property \begin{equation}\label{property (1)} \hbox{for each } p\in \mathcal{P}(H(p_0 M p_0,\beta)) \hbox{ with } |\Delta_0 (p)| > 1 \hbox{ the measure }
\end{equation} $$\Delta_0|_{\mathcal{P}(H((p_0-p) M (p_0-p),\beta))}: \mathcal{P}(H((p_0-p) M (p_0-p),\beta))\to \mathbb{C} \hbox{ is bounded.}$$
\end{proposition}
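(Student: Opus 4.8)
The plan is to obtain $p_0$ by localising, via Proposition~\ref{p restriction to sigma finite projections}, to a $\sigma$-finite corner and then running an inductive ``peeling'' procedure that must stop because complete additivity forbids an infinite orthogonal sequence of projections all of $\Delta$-modulus exceeding $1$. To begin with, by Proposition~\ref{p restriction to sigma finite projections} there is a $\sigma$-finite projection $q\in\mathcal{P}(H(M,\beta))$ with $\Delta|_{\mathcal{P}(H(qMq,\beta))}$ unbounded. Since $\beta(q)=q$, the map $\beta$ restricts to a $\mathbb{C}$-linear $^*$-involution of the $\sigma$-finite von Neumann algebra $qMq$, which is again continuous (a corner of a continuous von Neumann algebra has no nonzero abelian projection, hence is continuous). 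Replacing $M$ by $qMq$, we may and do assume that $M$ is $\sigma$-finite; then every corner $eMe$ with $0\ne e\in\mathcal{P}(H(M,\beta))$ is again $\sigma$-finite and continuous.

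Call a projection $e\in\mathcal{P}(H(M,\beta))$ \emph{admissible} if $\Delta|_{\mathcal{P}(H(eMe,\beta))}$ is unbounded; by the reduction, $\mathbf{1}$ is admissible. Set $p^{(0)}=\mathbf{1}$ and iterate: given an admissible projection $p^{(k)}$, if it satisfies property~\eqref{property (1)} with $p^{(k)}$ in the role of $p_0$, stop and let $p_0=p^{(k)}$; otherwise the failure of~\eqref{property (1)} provides a projection $r_{k+1}\in\mathcal{P}(H(p^{(k)}Mp^{(k)},\beta))$ with $|\Delta(r_{k+1})|>1$ for which $\Delta$ is still unbounded on $\mathcal{P}(H((p^{(k)}-r_{k+1})M(p^{(k)}-r_{k+1}),\beta))$; then $p^{(k+1)}=p^{(k)}-r_{k+1}$ is again $\beta$-fixed and admissible, and the procedure continues.

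The iteration cannot run forever. Indeed, in that case the projections $r_k$ would be mutually orthogonal (because $r_{k+1}\le p^{(k)}\le\mathbf{1}-\sum_{j\le k}r_j$), would all belong to $\mathcal{P}(H(M,\beta))$ (each partial sum lies in the weak$^*$-closed set $H(M,\beta)$, hence so does the increasing limit $\sum_k r_k$), and would satisfy $|\Delta(r_k)|>1$ for every $k$; but complete additivity of $\Delta$ yields $\sum_k\Delta(r_k)=\Delta\big(\sum_k r_k\big)$, a convergent series, so $\Delta(r_k)\to 0$, a contradiction. Hence the procedure halts at some $p_0=p^{(k)}$. This $p_0$ is admissible---so $\Delta_0$ is unbounded---, satisfies property~\eqref{property (1)}, and has $p_0Mp_0$ $\sigma$-finite and continuous.

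It remains to arrange that $p_0Mp_0$ is of one of the pure types II$_1$, II$_\infty$, III. The map $\beta$ restricts to a $^*$-involution of $p_0Mp_0$ and, preserving order, Murray--von Neumann equivalence, finiteness, and the absence of abelian projections, fixes the central projections of $p_0Mp_0$ carrying its type II$_1$, II$_\infty$ and III parts. By Remark~\ref{remark direct sums}, $\Delta_0$ is unbounded on $\mathcal{P}(H(zMz,\beta))$ for one such $\beta$-fixed central projection $z\le p_0$; moreover $z$ inherits property~\eqref{property (1)} (in the role of $p_0$), since for $p\le z$ with $|\Delta_0(p)|>1$ the corner $(z-p)M(z-p)$ lies inside $(p_0-p)M(p_0-p)$, on which $\Delta_0$ is already bounded. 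Replacing $p_0$ by $z$ finishes the proof. The substantive point is the termination step, which is immediate from complete additivity; the only real care lies in the bookkeeping guaranteeing that the $r_k$ are mutually orthogonal, $\beta$-fixed, and admit a legitimate supremum in $\mathcal{P}(H(M,\beta))$, so I do not anticipate a serious obstacle.
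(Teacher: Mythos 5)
Your proof is correct and follows essentially the same route as the paper's: iteratively peel off projections $p$ with $|\Delta(p)|>1$ whose complementary corner still carries an unbounded restriction, note that complete additivity forbids an infinite orthogonal sequence of $\beta$-fixed projections all of $\Delta$-modulus exceeding $1$ so the process terminates, and reduce to pure type using the $\beta$-invariance of the central type decomposition together with Remark~\ref{remark direct sums} (you do this once at the end, the paper does it at each step of the iteration, which changes nothing). The one point where you go beyond the paper's written argument is the initial reduction via Proposition~\ref{p restriction to sigma finite projections}, which is in fact needed to justify the $\sigma$-finiteness claim in the statement; the paper instead performs that reduction separately in the proof of Theorem~\ref{t Jordan Dorofeev-Sherstnev}.
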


\begin{proof}
If the pair $(H(M,\beta),\Delta)$ satisfies the desired property then the proof is concluded by taking $p_0=\mathbf{1}$. Otherwise, there exists a projection $p_1\in \mathcal{P}(H(M,\beta))$ with $|\Delta (p_1)| > 1$ satisfying that the measure $\Delta|_{\mathcal{P}(H((\mathbf{1}-p_1) M (\mathbf{1}-p_1),\beta))}: \mathcal{P}(H((\mathbf{1}-p_1) M (\mathbf{1}-p_1),\beta))\to \mathbb{C}$ is unbounded. Since $(\mathbf{1}-p_1) M (\mathbf{1}-p_1)$ doesn't contain type I part, we can decompose $(\mathbf{1}-p_1) M (\mathbf{1}-p_1)$ as a direct sum of von Neumann subalgebras of type II$_1$, II$_\infty$ or III. We also observe that each summand in the above decomposition must be $\beta$-invariant. Therefore, by Remark \ref{remark direct sums}, there exists a subprojection $q_1\leq \mathbf{1}-p_1$ such that $q_1 M q_1$ is of type II$_1$, II$_\infty$ or III and $\Delta|_{\mathcal{P}(H(q_1 M q_1,\beta))}$ is unbounded.\smallskip

If the pair $(H(q_1 M q_1,\beta), \Delta|_{\mathcal{P}(H(q_1 M q_1,\beta))})$ satisfies property \eqref{property (1)} we obtain the desired statement. Otherwise, applying the above argument, there exists $p_2 \leq q_1$ in $H(q_1 M q_1,\beta)$ such that $|\Delta (p_2)| > 1$ and $\Delta|_{\mathcal{P}(H((q_1-p_2) M (q_1-p_2),\beta))}: \mathcal{P}(H((q_1-p_2) M (q_1-p_2),\beta))\to \mathbb{C}$ is unbounded. Thus, there exists $q_2\leq q_1-p_2$ such that  $q_2 M q_2$ is of type II$_1$, II$_\infty$ or III and $\Delta|_{\mathcal{P}(H(q_2 M q_2,\beta))}$ is unbounded.\smallskip

By repeating the above arguments, we find a pair  $(q_n M q_n, \Delta|_{\mathcal{P}(H(q_n M q_n,\beta))})$ (with $\beta(q_n)=q_n$, for every $n\in\mathbb{N}$) satisfying the desired statement, or there exists an infinite sequence $(p_n)$ of mutually orthogonal $\beta$-symmetric projections in $M$ satisfying $|\Delta (p_n)| > 1,$ for every natural $n$, which contradicts the complete additivity of $\Delta$.
\end{proof}

Henceforth, up to and including Lemma~\ref{lemma 9 Jordan Dorofeev 6.1.15 Ham}, $M$ will denote a $\sigma$-finite von Neumann algebra of type II$_1$, II$_\infty$ or III, $\beta$ a $\mathbb{C}$-linear $^*$-involution on $M$, and $N$ a type II$_1$ von Neumann subalgebra of $M$ satisfying $\beta(N)=N$ (compare Proposition \ref{conjecture subalgebra of type II1}). We observe that $H(N,\beta)$ is a JBW$^*$-subalgebra of $H(M,\beta)$. From now on, $\tau$ will stand for a faithful normal norm-one finite trace on $N$, whose restriction to $H(N,\beta)$ will be also denoted by $\tau$.

First, we recall some facts about the strong$^*$ topology.  For each normal positive functional $\varphi$ in the predual of a von Neumann algebra $M$, the mapping $$x\mapsto \|x\|_{\varphi} = \left(\varphi (\frac{x x^* + x^* x}{2}) \right)^{\frac12}\quad (x\in M)$$ defines a prehilbertian seminorm on $M$. The \emph{strong$^*$ topology} of $M$, denoted by $S^*(M,M_*)$,  is the locally convex topology on $M$ defined by all the seminorms $\|.\|_{\varphi}$, where $\varphi$ runs in the set of all positive functionals in $M_*$

\begin{lemma}\label{lemma 5 Dorofeev or 6.1.11 Ham} Let $\Delta: \mathcal{P}(H(M,\beta))\to \mathbb{C}$ be a completely additive unbounded measure. Suppose that the pair $(H(M,\beta),\Delta)$ satisfies property \eqref{property (1)} in Lemma \ref{proposition l4 Dorofeev Jordan or L 6.1.9 in Hamhalter}. Let $N$ a type II$_1$ von Neumann subalgebra of $M$ satisfying $\beta(N)=N$, and let $\tau$ denote the unital normal faithful finite trace on $N$. Then there exist a positive constant $K$ and $0<\delta< 1$ satisfying the following property: \begin{equation}\label{properpty (2)} \hbox{For each } q\in H(N,\beta) \hbox{ with } \tau(q)\leq \delta, \hbox{ we have }  \sup\{ |\Delta (p)| : p\in H(M,\beta), p\leq q\} \leq K.
\end{equation}
\end{lemma}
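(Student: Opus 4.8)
The plan is to argue by contradiction, mimicking the classical Dorofeev argument (\cite[Lemma 5]{Doro}, \cite[Lemma 6.1.11]{Ham03}) but inside the Jordan algebra $H(N,\beta)$ equipped with the finite trace $\tau$. Suppose no such pair $(K,\delta)$ exists. Then for every $n\in\mathbb{N}$ we can find a projection $q_n\in\mathcal{P}(H(N,\beta))$ with $\tau(q_n)\leq 2^{-n}$ and a projection $p_n\in\mathcal{P}(H(M,\beta))$ with $p_n\leq q_n$ and $|\Delta(p_n)|>n$. Because $\sum_n\tau(q_n)<\infty$, the ``tail'' projections $e_m=\bigvee_{n\geq m}q_n$ satisfy $\tau(e_m)\to 0$, so $(e_m)\downarrow e$ with $\tau(e)=0$, i.e.\ $e=0$ by faithfulness of $\tau$ on $H(N,\beta)$. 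Hence $(e_m)\downarrow 0$ in $\mathcal{P}(H(N,\beta))$, and a fortiori $e_m\to 0$ in the strong$^*$ topology $S^*(M,M_*)$ (monotone convergence of projections to $0$ converges strong$^*$). Since $p_n\leq e_m$ whenever $n\geq m$, the sequence $(p_n)$ also converges strong$^*$ to $0$ in $M$.

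The next step is to convert the strong$^*$-null sequence $(p_n)$ into a genuinely orthogonal (or near-orthogonal) sequence of projections in $H(M,\beta)$ with $|\Delta|$ still large on each term, thereby contradicting complete additivity of $\Delta$. This is exactly where property \eqref{property (1)} enters. Using property \eqref{property (1)}: for each $n$, $|\Delta(p_n)|>n>1$, so $\Delta$ restricted to $\mathcal{P}(H((\mathbf 1-p_n)M(\mathbf 1-p_n),\beta))$ is bounded, say by some constant; but more importantly, we can run a disjointification. Passing to a subsequence, I would extract from $(p_n)$ a sequence of mutually orthogonal projections: given $p_{n_1},\dots,p_{n_k}$ already chosen orthogonal with sum $s_k$, since $s_k$ is $\sigma$-finite and $p_n\to 0$ strong$^*$, the ``compressions'' $(\mathbf 1-s_k)\wedge(\text{something built from }p_n)$ eventually carry most of the mass of $\Delta(p_n)$; one selects $n_{k+1}$ large so that the part of $p_{n_{k+1}}$ orthogonal to $s_k$ still has $|\Delta|$ large (using that the overlap contributes a controlled amount, bounded via property \eqref{property (1)} applied to $s_k$, which has $|\Delta(s_k)|$ possibly $>1$). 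Here one must be careful that infima and suprema of projections in $H(M,\beta)$ stay in $H(M,\beta)$ — this is guaranteed since $\beta(p\vee q)=\beta(p)\vee\beta(q)$ and $\beta(p\wedge q)=\beta(p)\wedge\beta(q)$, as noted before Lemma~\ref{l 2 Dorofeev in Jordan}. Having produced mutually orthogonal $(r_k)\subseteq\mathcal{P}(H(M,\beta))$ with $|\Delta(r_k)|\to\infty$ (or at least $\not\to 0$), complete additivity of $\Delta$ forces $\sum_k\Delta(r_k)$ to converge, hence $\Delta(r_k)\to 0$, a contradiction.

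The main obstacle I anticipate is the disjointification step: controlling the ``overlap error'' $\Delta(p_n)-\Delta(p_n\wedge(\mathbf 1-s_k))$ (or the analogous quantity obtained via the range-projection trick of Remark 1 in \cite{Doro}) when passing to the part of $p_n$ orthogonal to the accumulated sum $s_k$. In the von Neumann case Dorofeev handles this using distances between projections and the inequality $G\vee p\leq \frac{16}{d(G,p)^2}(G+p)$; in the Jordan setting one has the same lattice operations available in $H(M,\beta)$, and $p\vee q, p\wedge q$ can be computed in the enveloping von Neumann algebra $M$, so the quantitative estimates transfer. The second delicate point is that property \eqref{property (1)} only gives a bound on $\Delta|_{\mathcal{P}(H((p_0-p)M(p_0-p),\beta))}$ for $p$ with $|\Delta_0(p)|>1$ — so the bound constant may depend on $p$; one resolves this by absorbing it into the induction, noting that at stage $k$ we only need a single bound valid for the fixed projection $s_k$ (equivalently $\mathbf 1-s_k$ up to the compression), not a uniform one. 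Once these two points are in place, the strong$^*$ convergence $p_n\to 0$ plus $\sigma$-finiteness (so that $S^*$-neighborhoods of $0$ are determined by countably many seminorms) makes the selection of the subsequence routine, and the contradiction with complete additivity closes the argument.
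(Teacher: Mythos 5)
Your setup coincides with the paper's: assuming the conclusion fails, you produce $q_n\in \mathcal{P}(H(N,\beta))$ with $\tau(q_n)\le 2^{-n}$ and $\sup\{|\Delta(p)|:p\le q_n\}>n$, form the tails $e_m=\vee_{n\ge m}q_n\in H(N,\beta)$, and use faithfulness of $\tau$ to get $e_m\searrow 0$. From there, however, you turn toward a disjointification of the $p_n$'s aimed at contradicting complete additivity, and the step you yourself flag as the ``main obstacle'' is a genuine gap, not a technicality. For an a priori \emph{unbounded} measure there is no way to control the overlap error $\Delta(p_{n})-\Delta(\hbox{orthogonal part})$: property \eqref{property (1)} only bounds $\Delta$ on corners $H((\mathbf{1}-p)M(\mathbf{1}-p),\beta)$ for projections $p$ with $|\Delta(p)|>1$, and the overlap piece does not live in such a corner. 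Worse, the scheme is self-defeating: as soon as an accumulated sum $s_k$ has $|\Delta(s_k)|>1$, property \eqref{property (1)} makes $\Delta$ \emph{bounded} on all projections below $\mathbf{1}-s_k$, which is exactly where you would need to find the next orthogonal piece with $|\Delta|$ large. So the extraction of mutually orthogonal $(r_k)$ with $|\Delta(r_k)|\to\infty$ cannot be carried out as described.

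The missing idea is that property \eqref{property (1)} should be applied in contrapositive to the tails themselves, and the contradiction is with \emph{unboundedness} via Lemma \ref{l 2 Dorofeev in Jordan}, not with complete additivity. Since $q_m\le e_n$ for all $m\ge n$, the measure $\Delta$ is unbounded on projections below each $e_n$. Hence no $p\le \mathbf{1}-e_n$ can satisfy $|\Delta(p)|>1$: otherwise \eqref{property (1)} would force $\Delta$ to be bounded on $\mathcal{P}(H((\mathbf{1}-p)M(\mathbf{1}-p),\beta))$, a set containing all projections below $e_n$. Therefore $\sup\{|\Delta(p)| : p\in \mathcal{P}(H(M,\beta)),\ p\le \mathbf{1}-e_n\}\le 1$ for every $n$, and since $\mathbf{1}-e_n\nearrow \mathbf{1}$, Lemma \ref{l 2 Dorofeev in Jordan} yields that $\Delta$ is bounded, contradicting the hypothesis. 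No orthogonalization and no strong$^*$ convergence of the $p_n$'s is needed.
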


\begin{proof} (compare \cite[Lemma 5]{Doro})  Arguing by reduction to the absurd, we suppose that the desired property does not hold. Then there exists a sequence $(q_n)$ in  $H(N,\beta)$ such that $|\tau (q_n)| \leq \frac{1}{2^n}$ and $$\sup\{ |\Delta (p)| : p\in H(M,\beta), p\leq q_n\}>n,$$ for every natural $n$.  Set $G_n:=\vee_{k=n}^{\infty} q_k$. Since every $q_n$ is $\beta$-symmetric, we deduce that $G_n$ also is $\beta$-symmetric for all $n\in\mathbb{N}$ (i.e., $(G_n)\subset H(N,\beta)$). Considering strong$^*$-limits of growing sequences, we deduce that $\tau(G_n)\leq \sum_{k=n}^{\infty} \tau(q_k) \leq \sum_{k=n}^{\infty} \frac{1}{2^k}$, which implies, by the faithfulness of $\tau$ on $N$, that $(G_n)\searrow 0$ in the strong$^*$-topology of $N$ (and also in the strong$^*$-topology of $M$). Since $G_n\geq q_n$ and $G_n\downarrow 0$, we have for every $m\ge n$,
\[
\sup\{|\Delta(p)|;p\le G_n\}\ge \sup\{|\Delta(p)|;p\le G_m\}\ge \sup\{|\Delta(p)|;p\le q_n\}>m,
\]
and $\Delta|_{\mathcal{P}(H(G_n M G_n,\beta))}$ is unbounded. Since the pair $(H(M,\beta),\Delta)$ satisfies property \eqref{property (1)}, we deduce that $\sup\{ |\Delta (p)| : p\in H(M,\beta), p\leq \mathbf{1}-G_n\}\leq 1$, for every natural $n$. Since $\mathbf{1}-G_n\nearrow \mathbf{1}$ in the strong$^*$-topology, Lemma \ref{l 2 Dorofeev in Jordan} implies that $\Delta$ is bounded, which is impossible.
\end{proof}

The automatic boundedness of completely additive measure on $\mathcal{P}(H(M,\beta))$ actually relies on the appropriate Jordan version of the Mackey-Gleason theorem stated in Theorem \ref{t Jordan Mackey-Gleason}.

\begin{lemma}\label{lemma 6 Dorofeev or 6.1.13 Ham} Let $\Delta: \mathcal{P}(H(M,\beta))\to \mathbb{C}$ be a completely additive unbounded measure. Suppose that the pair $(H(M,\beta),\Delta)$ satisfies the property \eqref{property (1)} in Lemma \ref{proposition l4 Dorofeev Jordan or L 6.1.9 in Hamhalter}. Let $N$ a type II$_1$ von Neumann subalgebra of $M$ satisfying $\beta(N)=N$ and let $\tau$ be the unital faithful normal trace on $N$. Then there exists a positive constant $C_0$ satisfying that if $q\in \mathcal{P}(H(N,\beta))$ and $\Delta|_{\mathcal{P}(H(qMq,\beta))}$ is bounded then $$\sup \{|\Delta(p)| : p\in \mathcal{P}(H(qMq,\beta))\} \leq C_0.$$
\end{lemma}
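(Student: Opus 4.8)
The plan is to linearise $\Delta$ on the corner $qMq$ by means of the Jordan Mackey--Gleason theorem and then to estimate the resulting normal functional through a Peirce decomposition adapted to a $\tau$-small partition of $q$; the only real work lies in the last estimate. First I would invoke Lemma~\ref{lemma 5 Dorofeev or 6.1.11 Ham} to fix a constant $K>0$ and $0<\delta<1$ satisfying property \eqref{properpty (2)}, and the constant $C_0$ will depend only on $K$ and $\delta$. Given $q\in\mathcal{P}(H(N,\beta))$ with $\Delta|_{\mathcal{P}(H(qMq,\beta))}$ bounded, I would apply Lemma~\ref{l 3 Jordan} inside the type II$_1$ von Neumann algebra $N$ (with the $\mathbb{C}$-linear $^*$-involution $\beta|_N$, the faithful normal trace $\tau$, and parameter $\delta/2$) to obtain pairwise orthogonal projections $q_1,\dots,q_n\in\mathcal{P}(H(N,\beta))$ with $q=\sum_{i=1}^n q_i$, $\tau(q_i)\le\delta/2$ and $n\le 4/\delta$. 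In particular $\tau(q_i)\le\delta$ and $\tau(q_i+q_j)\le\delta$ for $i\ne j$, so property \eqref{properpty (2)} gives
\[\sup\{|\Delta(p)| : p\in\mathcal{P}(H(q_iMq_i,\beta))\}\le K\quad\text{and}\quad\sup\{|\Delta(p)| : p\in\mathcal{P}(H((q_i+q_j)M(q_i+q_j),\beta))\}\le K.\]
Since $q\in H(M,\beta)$, the von Neumann algebra $qMq$ is continuous and $\beta$-invariant, and $\Delta|_{\mathcal{P}(H(qMq,\beta))}$ is a bounded, completely additive measure, so Theorem~\ref{t Jordan Mackey-Gleason} provides a \emph{normal} functional $\varphi\in H(qMq,\beta)_*$ with $\Delta(p)=\varphi(p)$ for all $p\in\mathcal{P}(H(qMq,\beta))$; hence it suffices to bound $\|\varphi\|$ by a constant depending only on $K$ and $\delta$.

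Relative to the partition $q=\sum_i q_i$, every $x\in H(qMq,\beta)$ decomposes as $x=\sum_{i\le j}x_{ij}$ with $x_{ii}=q_ixq_i\in H(q_iMq_i,\beta)$ and, for $i<j$, $x_{ij}=q_ixq_j+q_jxq_i\in H((q_i+q_j)M(q_i+q_j),\beta)$; orthogonality of the $q_i$ forces $\|x_{ij}\|\le\|x\|$, so $\|\varphi\|\le\sum_{i\le j}\|\varphi|_{B_{ij}}\|$, where $B_{ij}=\{x_{ij} : x\in H(qMq,\beta)\}$. As $B_{ii}\subseteq H(q_iMq_i,\beta)$ and $B_{ij}\subseteq H((q_i+q_j)M(q_i+q_j),\beta)$, the whole proof reduces to the single fact that a \emph{normal} functional $\omega$ on a JBW$^*$-algebra $B$ \emph{whose unit is $e$} satisfies $\|\omega\|\le 4\sup\{|\omega(r)| : r\in\mathcal{P}(B)\}$: decompose $\omega=\omega_1+i\omega_2$ into self-adjoint normal parts of norm $\le\|\omega\|$, apply the normal Jordan decomposition $\omega_k=\omega_k^+-\omega_k^-$ and the Hahn-type decomposition to get $\omega_k^{+}(e)=\omega_k(f_k^{+})$ and $\omega_k^{-}(e)=-\omega_k(f_k^{-})$ for projections $f_k^{\pm}\le e$, whence $\|\omega_k\|=\omega_k^+(e)+\omega_k^-(e)\le 2\sup_{r}|\omega(r)|$. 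Applying this with $B=H(q_iMq_i,\beta)$ and with $B=H((q_i+q_j)M(q_i+q_j),\beta)$ (units $q_i$ and $q_i+q_j$ respectively) and using the two displayed bounds yields $\|\varphi|_{B_{ij}}\|\le 4K$ for all $i\le j$, hence $\|\varphi\|\le 4K\binom{n+1}{2}=2Kn(n+1)\le 2K\cdot\tfrac{4}{\delta}\bigl(\tfrac{4}{\delta}+1\bigr)=:C_0$, and finally $\sup\{|\Delta(p)| : p\in\mathcal{P}(H(qMq,\beta))\}\le\|\varphi\|\le C_0$, which is independent of $q$.

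The main obstacle is this last estimate, and specifically the fact that ``$\sup$ over projections controls the norm'' with a universal constant is only available when the witnessing projection can be taken under $e$, i.e. when $e$ is the \emph{unit} of the algebra to which $\omega$ is restricted (for a non-maximal projection the Hahn decomposition need not be realised by a sub-projection). This is exactly why the partition must be taken so fine that every pair corner $(q_i+q_j)M(q_i+q_j)$ still satisfies the hypothesis $\tau(q_i+q_j)\le\delta$ of property \eqref{properpty (2)} — controlling only the corners $q_iMq_i$ would not suffice. Normality of $\varphi$, hence of all its Peirce restrictions so that the Jordan/Hahn decomposition is available, is the extra information supplied by Theorem~\ref{t Jordan Mackey-Gleason} thanks to the complete additivity of $\Delta$.
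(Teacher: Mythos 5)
Your proposal is correct and follows essentially the same route as the paper: fix $K$ and $\delta$ from Lemma~\ref{lemma 5 Dorofeev or 6.1.11 Ham}, linearise $\Delta$ on $H(qMq,\beta)$ via Theorem~\ref{t Jordan Mackey-Gleason}, partition $q$ by Lemma~\ref{l 3 Jordan} into at most $4/\delta$ pieces of trace at most $\delta/2$, and control the Peirce blocks $q_ipq_j+q_jpq_i$ through property \eqref{properpty (2)} applied to the pair corners $(q_i+q_j)M(q_i+q_j)$. The only (cosmetic) difference is that you bound the whole norm $\|\varphi\|$ using the Jordan--Hahn decomposition of the normal functional, where the paper bounds $|\varphi|$ on self-adjoint contractions of each corner by spectral resolution of the element; both yield a constant depending only on $K$ and $\delta$.
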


\begin{proof} The proof given in \cite[Lemma 6]{Doro} or in \cite[Lemma 6.1.13]{Ham03} remains valid here when we replace \cite[Lemma 5]{Doro} with our previous Lemma \ref{lemma 5 Dorofeev or 6.1.11 Ham} and the Bunce-Wright-Mackey-Gleason theorem \cite{BuWri92} with Theorem \ref{t Jordan Mackey-Gleason}. We include some details for completeness reasons.\smallskip

Let $C_0=\frac{32 K}{\delta}$, where $K$ and $\delta$ are given by Lemma \ref{lemma 5 Dorofeev or 6.1.11 Ham}. Take a projection $q$ in $H(N,\beta)$ with $\Delta|_{\mathcal{P}(H(qMq,\beta))}$ bounded. Theorem \ref{t Jordan Mackey-Gleason} implies the existence of a (normal) continuous linear functional $\varphi: H(q M q,\beta) \to \mathbb{C}$ such that $\varphi(p) =\Delta (p)$, for every $p\in \mathcal{P}(H(qMq,\beta))$. By Lemma \ref{l 3 Jordan} there exists a family of pairwise orthogonal projections $(q_i)_{i=1,\ldots,n}$ in $H(N,\beta)$ such that $q= \sum_{i=1}^{n} q_i,$ $\tau (q_i) \leq \frac{\delta}{2},$ for every $i=1,\ldots, n,$ and $n\leq 4/\delta.$\smallskip

Let us pick an arbitrary projection $p\in H(qMq,\beta)$. We need to show that $|\Delta(p)|\le C_0$. If we write $p =\sum_{i,j=1}^{n} \{q_i, p, q_j\}=\sum_{i,j=1}^{n} \frac12(q_i p q_j+q_j p q_i)$, we observe that, for $i\neq j$, $q_i +q_j$ is a projection in $H(N,\beta)$ and $\tau (q_i+q_j) \leq \delta$. Lemma \ref{lemma 5 Dorofeev or 6.1.11 Ham} implies that

 $$\sup \{ |\Delta (r) |  : r\in \mathcal{P} (H(M,\beta)), r\leq q_i+q_j \} \leq K, $$ and hence that
 $$ \sup \{|\varphi(r)| : r\in \mathcal{P} (H(M,\beta)), r\leq q_i+q_j \} \leq K.$$

 %$\sup \{ |\Delta (p) | =|\varphi(p)| : p\in \mathcal{P} (H(M,\beta)), p\leq q_i+q_j \} \leq K$.

 Considering spectral resolutions, we deduce that $\sup \{ |\varphi(a)| : a \in H((q_i+q_j)M(q_i+q_j),\beta), \|a\|\leq 1 \} \leq 2 K$. Similarly, $\sup \{ |\varphi(a)| : a \in H(q_j M q_j,\beta), \|a\|\leq 1 \} \leq 2 K$, for every $j=1,\dots,n.$ Therefore, having in mind that $q_i p q_j+q_j p q_i$ lies in $H((q_i+q_j)M(q_i+q_j),\beta)$, we deduce that $$|\Delta(p)|=|\varphi(p)|  \leq \sum_{i,j=1}^{n} \frac12 |\varphi(q_i p q_j+q_j p q_i)| \leq n^2 2 K \leq \frac{16}{\delta^2} 2 K =C_0.$$
\end{proof}

In a similar fashion, replacing \cite[Lemmas 2 and 6]{Doro} with Lemmas \ref{l 2 Dorofeev in Jordan} and \ref{lemma 6 Dorofeev or 6.1.13 Ham}, respectively, the proof of \cite[Lemma 7]{Doro} holds to prove the following result.

\begin{lemma}\label{lemma 7 Dorofeev or 6.1.14 Ham} Let $\Delta: \mathcal{P}(H(M,\beta))\to \mathbb{C}$ be a completely additive unbounded measure, where $H(M,\beta)$ is a $\sigma$-finite JBW$^*$-algebra. Suppose that the pair $(H(M,\beta),\Delta)$ satisfies the property \eqref{property (1)} in Lemma \ref{proposition l4 Dorofeev Jordan or L 6.1.9 in Hamhalter}. Let $N$ a type II$_1$ von Neumann subalgebra of $M$ satisfying $\beta(N)=N$. Then there exists a projection $q_0$ in $H(N,\beta)$ satisfying the following properties:\begin{enumerate}[$(a)$]\item $\Delta|_{\mathcal{P}(H(q_0 M q_0,\beta))}$ is bounded;
\item if $q\in \mathcal{P} (H(N,\beta))$, $q\gvertneqq q_0$ then $\Delta|_{\mathcal{P}(H(q M q,\beta))}$ is unbounded. %$\hfill\Box$
\end{enumerate}
\end{lemma}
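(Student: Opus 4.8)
The plan is to obtain $q_0$ as a maximal element, with respect to the usual order on projections, of the family
\[
\mathcal{F}:=\bigl\{ q\in \mathcal{P}(H(N,\beta)) : \Delta|_{\mathcal{P}(H(qMq,\beta))}\ \text{is bounded}\bigr\},
\]
and then to read off $(a)$ and $(b)$ directly from maximality. Note that $\mathcal{F}\neq\emptyset$, since $0\in\mathcal{F}$ (in fact Lemma~\ref{lemma 5 Dorofeev or 6.1.11 Ham} shows that every $\beta$-symmetric projection $q$ of $N$ with $\tau(q)\leq\delta$ lies in $\mathcal{F}$). Two preliminary remarks will be used throughout. First, for every $\beta$-symmetric projection $q$ of $N$ the corner $qMq$ is again a continuous von Neumann algebra (a corner of a type II$_1$, II$_\infty$ or III algebra is of one of these types), $\beta$ restricts to a $\mathbb{C}$-linear $^*$-involution on $qMq$ with $H(qMq,\beta)=U_q(H(M,\beta))$, and $\Delta|_{\mathcal{P}(H(qMq,\beta))}$ is again a completely additive measure. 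Second, and decisively, Lemma~\ref{lemma 6 Dorofeev or 6.1.13 Ham} (whose hypotheses are in force here) provides a \emph{single} constant $C_0>0$ with $\sup\{|\Delta(p)| : p\in\mathcal{P}(H(qMq,\beta))\}\leq C_0$ for every $q\in\mathcal{F}$.

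The main step will be to check that every chain $\mathcal{C}\subseteq\mathcal{F}$ has an upper bound inside $\mathcal{F}$. Let $q:=\sup\mathcal{C}$, computed in $N$ (equivalently in $M$, since a von Neumann subalgebra computes joins of projections the same way); since $\beta$ is an order isomorphism fixing each element of $\mathcal{C}$, $\beta(q)=q$, so $q\in\mathcal{P}(H(N,\beta))$. As $N$ carries the faithful normal finite trace $\tau$ it is $\sigma$-finite, so one may pick $q_n\in\mathcal{C}$ with $\tau(q_n)\to\sup_{r\in\mathcal{C}}\tau(r)$; replacing $q_n$ by $q_1\vee\cdots\vee q_n$, which still lies in $\mathcal{C}$ because $\mathcal{C}$ is a chain, we may assume $(q_n)$ increasing, and then normality and faithfulness of $\tau$ force $q_n\uparrow q$. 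Each $q_n$ is in $\mathcal{F}$, so Lemma~\ref{lemma 6 Dorofeev or 6.1.13 Ham} gives $\sup\{|\Delta(p)| : p\in\mathcal{P}(H(q_nMq_n,\beta))\}\leq C_0$, and $\mathcal{P}(H(q_nMq_n,\beta))$ is exactly the set of projections of $H(qMq,\beta)$ dominated by $q_n$. Hence Lemma~\ref{l 2 Dorofeev in Jordan}, applied inside the continuous von Neumann algebra $qMq$ with unit $q$, with the increasing sequence $(q_n)\uparrow q$, and with the constant $C_0$, shows that $\Delta|_{\mathcal{P}(H(qMq,\beta))}$ is bounded, i.e. $q\in\mathcal{F}$. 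By Zorn's lemma $\mathcal{F}$ then has a maximal element $q_0$; equivalently one can build $q_0$ by a (sequential, in this $\sigma$-finite setting) exhaustion, pushing $\tau$ up as far as possible and passing to suprema at limit stages, the passage being justified by exactly the computation above.

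Now $(a)$ is just the assertion $q_0\in\mathcal{F}$, and for $(b)$: if $q\in\mathcal{P}(H(N,\beta))$ with $q\gvertneqq q_0$ had $\Delta|_{\mathcal{P}(H(qMq,\beta))}$ bounded, then $q\in\mathcal{F}$ with $q>q_0$, contradicting the maximality of $q_0$; so $\Delta|_{\mathcal{P}(H(qMq,\beta))}$ must be unbounded. I expect the only delicate points to be bookkeeping: making sure the sequence $(q_n)$ genuinely increases to $\sup\mathcal{C}$ — this is exactly where $\sigma$-finiteness and the faithful trace of $N$ are needed — and checking that Lemma~\ref{l 2 Dorofeev in Jordan} is legitimately applicable to the corner $qMq$, i.e. that $qMq$ is still a continuous von Neumann algebra carrying the restricted $\mathbb{C}$-linear $^*$-involution. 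The essential structural input is the \emph{uniformity} of the bound $C_0$ coming from Lemma~\ref{lemma 6 Dorofeev or 6.1.13 Ham}: it is what lets Lemma~\ref{l 2 Dorofeev in Jordan} be invoked with one constant along the whole sequence $(q_n)$, whereas the individual bounds attached to the $q_n$ might otherwise diverge.
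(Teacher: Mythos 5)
Your proof is correct and follows essentially the same route as the paper's: Zorn's lemma to produce a maximal object, $\sigma$-finiteness to reduce to a countable increasing sequence $q_n\nearrow q_0$, the uniform constant $C_0$ from Lemma~\ref{lemma 6 Dorofeev or 6.1.13 Ham}, and Lemma~\ref{l 2 Dorofeev in Jordan} applied in the corner to pass to the limit. The only (harmless) difference is bookkeeping: the paper applies Zorn to families of mutually orthogonal projections in $H(N,\beta)$ ordered by inclusion, getting countability directly from $\sigma$-finiteness, whereas you order the ``good'' projections themselves and use the trace $\tau$ to extract a cofinal increasing sequence from a chain.
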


\begin{proof} Let $\mathcal{B}$ denote the set of all families $(q_i)_{i\in I}$ of mutually orthogonal projections in $H(N,\beta)$ such that for each finite subset $F\subset I$, the projection $q_F :=\sum_{i\in F} q_i$ satisfies that $\Delta|_{\mathcal{P} (H(q_F M q_F,\beta))}$ is bounded. The set $\mathcal{B}$ is an inductive set when it is equipped with the order given by inclusion (by Proposition~\ref{proposition l4 Dorofeev Jordan or L 6.1.9 in Hamhalter}, $\mathcal{B}\ne\emptyset$). By Zorn's lemma there exists a maximal element $(q_i^{0})_I\in \mathcal{B}$. The set I is at most countable because $H(M,\beta)$ is $\sigma$-finite. We claim that the projection $q_0=\sum_{i\in I} q_i^0\in H(N,\beta)$ satisfies the desired property. Indeed, defining $q_n:=\sum_{i=1}^n q_i^0,$ we have $q_n \nearrow q_0$. Since $(q_i^{0})_I\in \mathcal{B}$, the measure $\Delta|_{\mathcal{P}(H(q_n M q_n,\beta))}$ is bounded for every $n$. Lemma \ref{lemma 6 Dorofeev or 6.1.13 Ham} implies the existence of a constant $C_0>0$ such that $\sup\{|\Delta(p)| : p\in H(M,\beta), p\leq q_n \}\leq C_0,$ for every natural $n$. Lemma \ref{l 2 Dorofeev in Jordan} proves that $\Delta|_{\mathcal{P}(H(q_0 M q_0,\beta))}$ is bounded.\smallskip

Finally, the second property follows from the maximality of the element $(q_i^{0})_I\in \mathcal{B}$.
\end{proof}

We shall see now that the arguments in the proof of \cite[Lemma 6.1.15]{Ham03} are also valid in the Jordan setting. Actually, the proof follows the arguments we gave in Lemma \ref{lemma 5 Dorofeev or 6.1.11 Ham}.

\begin{lemma}\label{lemma 6.1.15 Ham} Let $\Delta: \mathcal{P}(H(M,\beta))\to \mathbb{C}$ be a completely additive unbounded measure. Suppose that the pair $(H(M,\beta),\Delta)$ satisfies the property \eqref{property (1)} in Lemma \ref{proposition l4 Dorofeev Jordan or L 6.1.9 in Hamhalter}. Let us assume that $H(M,\beta)$ is $\sigma$-finite and let $\varphi$ be a faithful normal state on $H(M,\beta)$. Then there exists a projection $p_0$ in $H(M,\beta)$ and $\delta>0$ such that $\Delta|_{\mathcal{P}(H(p_0 M p_0,\beta))}$ is unbounded and the following property holds: \begin{equation}\label{eq property(3)} \hbox{If } p\in \mathcal{P} (H(M,\beta)),  p\leq p_0 \hbox{ and } \varphi (p)\leq \delta, \hbox{ then } \Delta|_{\mathcal{P}(H(p M p,\beta))} \hbox{ is bounded.}
\end{equation}
\end{lemma}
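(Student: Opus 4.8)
The plan is to argue by reduction to the absurd, following the pattern of the proof of Lemma~\ref{lemma 5 Dorofeev or 6.1.11 Ham}, with one structural modification. In that proof one produces a (not necessarily monotone) sequence of projections with small $\tau$-values and then forms the suprema $G_n=\bigvee_{k\ge n}q_k$; this is controlled because $\tau$ is a trace, so $\tau(p\vee q)\le\tau(p)+\tau(q)$. Here we are only given a faithful normal \emph{state} $\varphi$, for which the subadditivity $\varphi(p\vee q)\le\varphi(p)+\varphi(q)$ need not hold, so instead I would extract a \emph{decreasing} sequence of projections directly from the negated conclusion.

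So, suppose the statement fails. Then, for every $e\in\mathcal{P}(H(M,\beta))$ such that $\Delta|_{\mathcal{P}(H(eMe,\beta))}$ is unbounded and every $\delta>0$, there is a projection $p\le e$ in $\mathcal{P}(H(M,\beta))$ with $\varphi(p)\le\delta$ for which $\Delta|_{\mathcal{P}(H(pMp,\beta))}$ is unbounded. Beginning with $p_0=\mathbf 1$ (on which $\Delta$ is unbounded by hypothesis) and applying this successively (with $e=p_{n-1}$ and $\delta=2^{-n}$ at the $n$-th step), I would build a decreasing sequence $(p_n)$ in $\mathcal{P}(H(M,\beta))$ with $\varphi(p_n)\le 2^{-n}$ and $\Delta|_{\mathcal{P}(H(p_nMp_n,\beta))}$ unbounded for every $n$.

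Next, since $(p_n)$ is decreasing, its infimum $\bigwedge_n p_n$ satisfies $\varphi(\bigwedge_n p_n)=\lim_n\varphi(p_n)=0$ by normality of $\varphi$, hence $\bigwedge_n p_n=0$ by faithfulness; equivalently $(\mathbf 1-p_n)\uparrow\mathbf 1$, and each $\mathbf 1-p_n$ lies in $\mathcal{P}(H(M,\beta))$ because $\beta(\mathbf 1)=\mathbf 1$ and $\beta(p_n)=p_n$. The crucial step, and the only place where the hypothesis~\eqref{property (1)} is used, is the \emph{uniform} estimate
\[
\sup\{\,|\Delta(r)| : r\in\mathcal{P}(H(M,\beta)),\ r\le\mathbf 1-p_n\,\}\le 1 \qquad\text{for every }n.
\]
Indeed, if some $r\le\mathbf 1-p_n$ had $|\Delta(r)|>1$, then~\eqref{property (1)} would force $\Delta|_{\mathcal{P}(H((\mathbf 1-r)M(\mathbf 1-r),\beta))}$ to be bounded, and since $p_n\le\mathbf 1-r$ this contradicts the unboundedness of $\Delta|_{\mathcal{P}(H(p_nMp_n,\beta))}$.

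Finally, Lemma~\ref{l 2 Dorofeev in Jordan}, applied to the increasing sequence $(\mathbf 1-p_n)\uparrow\mathbf 1$ with the constant $C=1$ furnished by the uniform estimate, yields that $\Delta$ is bounded, contradicting the hypothesis; this completes the argument. I do not anticipate a deep obstacle: essentially the only point that needs attention is the replacement of the trace-based $G_n$-construction of Lemma~\ref{lemma 5 Dorofeev or 6.1.11 Ham} by a nested sequence, which is dictated by having only a state rather than a trace available; thereafter the reasoning transcribes verbatim, using that $\mathbf 1-p$ stays $\beta$-symmetric and that projections of $H(pMp,\beta)$ are exactly the projections of $H(M,\beta)$ dominated by $p$.
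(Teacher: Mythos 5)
Your proposal is correct and is essentially the paper's own proof: the paper likewise negates the conclusion to extract a decreasing sequence $(p_n)$ with $\varphi(p_n)\leq\frac{1}{n+1}$ and $\Delta|_{\mathcal{P}(H(p_nMp_n,\beta))}$ unbounded, uses faithfulness and normality of $\varphi$ to get $\mathbf{1}-p_n\nearrow\mathbf{1}$, invokes property \eqref{property (1)} for the uniform bound $\sup\{|\Delta(r)|: r\leq\mathbf{1}-p_n\}\leq 1$, and concludes via Lemma~\ref{l 2 Dorofeev in Jordan}. Your explicit justification of the uniform bound and your remark on why a nested sequence replaces the $G_n$-construction of Lemma~\ref{lemma 5 Dorofeev or 6.1.11 Ham} are both accurate.
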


\begin{proof} If the desired property holds for $p_0=\mathbf{1}$ and some $\delta$, then the Lemma is proved. Otherwise, there exists a projection $p_1$ in $\mathcal{P} (H(M,\beta))$ such that $\varphi (p_1)\leq \frac12$ and $\Delta|_{\mathcal{P}(H(p_1 M p_1,\beta))}$ is unbounded. If $p_1$ satisfies the desired property the statement is proved. If that is not the case, there exists a projection $p_2$ in $\mathcal{P} (H(M,\beta))$ such that $p_2\leq p_1$, $\varphi (p_2)\leq \frac13$ and $\Delta|_{\mathcal{P}(H(p_2 M p_2,\beta))}$ is unbounded. Repeating the above argument, we obtain the desired conclusion for a suitable projection, or there exists a decreasing sequence of projections $(p_n)$ in $H(M,\beta)$ satisfying $\varphi (p_n)\leq \frac1n$ and $\Delta|_{\mathcal{P}(H(p_n M p_n,\beta))}$ is unbounded. The faithfulness of $\varphi$ implies that $p_n\searrow 0$ in the strong$^*$-topology.\smallskip

Since the pair $(H(M,\beta),\Delta)$ satisfies property \eqref{property (1)} in Lemma \ref{proposition l4 Dorofeev Jordan or L 6.1.9 in Hamhalter}, we conclude that $$\sup\{ |\Delta (p)| : p\in \mathcal{P} (H(M,\beta)), p\leq 1-p_n\} \leq 1,$$ for all $n$. Recalling that $\mathbf{1}-p_n \nearrow\mathbf{1}$ in the strong$^*$-topology, Lemma \ref{l 2 Dorofeev in Jordan} implies that $\Delta$ is bounded, which contradicts the hypothesis of the lemma.
\end{proof}

\begin{lemma}\label{lemma 9 Jordan Dorofeev 6.1.15 Ham} Let $\Delta: \mathcal{P}(H(M,\beta))\to \mathbb{C}$ be a completely additive unbounded measure. Let $p_0$ be a projection in $H(M,\beta)$ satisfying that $\Delta|_{\mathcal{P}(H(p_0 M p_0,\beta))}$ and $\Delta|_{\mathcal{P}(H((1-p_0) M (1-p_0),\beta))}$ are bounded. Let $K_n\rightarrow\infty$.  Then for each natural $n$, there exists a projection $q=q_n\in H(M,\beta)$ such that $|\Delta (q)|>K_n$ and $d(q,p_0)\geq \frac18$.
\end{lemma}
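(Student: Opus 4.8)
I plan to play off the unboundedness of $\Delta$ against its boundedness on the corner $p_0 M p_0$. The idea: choose a projection $e$ with $|\Delta(e)|$ enormous, split it via the spectral resolution of $ep_0e$ into a piece that is spectrally transversal to $p_0$ and a piece that is spectrally close to $p_0$; if the transversal piece carries most of $|\Delta(e)|$ we are done immediately, and if the close piece does, we replace it by its complement in $1$ after deleting the harmless projection $(1-h)\wedge p_0\le p_0$. (Only the boundedness of $\Delta$ on $p_0Mp_0$ is needed.)

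In detail, I would first set $C:=\sup\{|\Delta(p)|:p\in\mathcal P(H(M,\beta)),\ p\le p_0\}$, which is finite by hypothesis, and recall that $\Delta(1)$ is a fixed scalar. Fix $n$. Since $\Delta$ is unbounded, choose $e=e_n\in\mathcal P(H(M,\beta))$ with $|\Delta(e)|>2(K_n+|\Delta(1)|+C)$. The element $a:=ep_0e$ belongs to $H(M,\beta)$, satisfies $0\le a\le e$ and commutes with $e$; as $\beta$ is a (necessarily normal) $^*$-anti-automorphism fixing the commuting self-adjoint elements $e$ and $a$, it fixes the abelian von Neumann algebra they generate, so the spectral projections $g:=\chi_{[0,1/2]}(a)$ and $h:=e-g=\chi_{(1/2,1]}(a)$, computed inside $eMe$, lie in $\mathcal P(H(M,\beta))$. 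Finite additivity and $g+h=e$ give $\max\{|\Delta(g)|,|\Delta(h)|\}\ge\tfrac12|\Delta(e)|>K_n+|\Delta(1)|+C$.

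If $|\Delta(g)|$ is the larger, take $q_n:=g$: since $g\le e$, $g$ commutes with $a$, and $a\le\tfrac12$ on $g(H)$, we get $gp_0g=gag\le\tfrac12 g$, so $\|\xi-p_0\xi\|^2=1-\|p_0\xi\|^2\ge\tfrac12$ for every unit $\xi\in g(H)$, and \eqref{eq inque distance projections} gives $d(g,p_0)\ge\tfrac1{2\sqrt2}>\tfrac18$, while $|\Delta(g)|>K_n$. If $|\Delta(h)|$ is the larger, then $h\le e$ yields $hp_0h=hah\ge\tfrac12 h$; put $r:=(1-h)\wedge p_0\in\mathcal P(H(M,\beta))$ (so $r\le p_0$ and $|\Delta(r)|\le C$) and $q_n:=1-h-r\in\mathcal P(H(M,\beta))$. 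From the orthogonal decomposition $1=h+r+q_n$ we get $\Delta(q_n)=\Delta(1)-\Delta(h)-\Delta(r)$, hence $|\Delta(q_n)|\ge|\Delta(h)|-|\Delta(1)|-C>K_n$. For the distance I would show $q_np_0q_n\le\tfrac12 q_n$: the key is that $hp_0h\ge\tfrac12 h$ forces $\sigma\big((1-h)p_0(1-h)\big)\subseteq[0,\tfrac12]\cup\{1\}$ with the eigenvalue-$1$ subspace equal to $(1-h)(H)\cap p_0(H)=r(H)$; therefore $(1-h)p_0(1-h)-r$ is a positive operator supported on $q_n(H)$ of norm $\le\tfrac12$, and since a unit $\xi\in q_n(H)$ satisfies $\xi=(1-h)\xi$ and $r\xi=0$ we obtain $\|p_0\xi\|^2=\langle(1-h)p_0(1-h)\xi,\xi\rangle=\langle((1-h)p_0(1-h)-r)\xi,\xi\rangle\le\tfrac12$, so again $d(q_n,p_0)\ge\tfrac1{2\sqrt2}>\tfrac18$.

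The only genuinely non-routine point — and hence the main obstacle — is the spectral inclusion $\sigma((1-h)p_0(1-h))\subseteq[0,\tfrac12]\cup\{1\}$ derived from $hp_0h\ge\tfrac12 h$. I would obtain it from the classical description of the von Neumann algebra generated by two projections $h$ and $p_0$ as the direct sum of the four commuting corners $h\wedge p_0$, $h\wedge(1-p_0)$, $(1-h)\wedge p_0$, $(1-h)\wedge(1-p_0)$ and a ``generic'' summand on which $h$ and $p_0$ are unitarily equivalent to $1\oplus 0$ and to $\left(\begin{smallmatrix}c&\sqrt{c(1-c)}\\\sqrt{c(1-c)}&1-c\end{smallmatrix}\right)$ for some $0<c<1$ in an abelian algebra: there $hp_0h\ge\tfrac12 h$ forces $c\ge\tfrac12$ (so $(1-h)p_0(1-h)=1-c\le\tfrac12$ on the generic part) and forces $h\wedge(1-p_0)=0$, while $(1-h)\wedge p_0=r$ supplies the eigenvalue $1$ and $(1-h)\wedge(1-p_0)$ the eigenvalue $0$; collecting these proves the claim. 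The remaining verifications are bookkeeping with finite additivity, together with the facts recorded just before Lemma~\ref{l 2 Dorofeev in Jordan} that $\beta$ preserves infima of projections and fixes spectral projections of $\beta$-fixed elements of an abelian subalgebra, which guarantee $q_n\in\mathcal P(H(M,\beta))$.
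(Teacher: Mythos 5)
Your proof is correct. It follows the same broad strategy as the paper's (which adapts Lemma 9 of \cite{Doro}): take a projection carrying a huge value of $|\Delta|$, split it according to the spectral resolution of its compression of $p_0$ at the threshold $\tfrac12$, and in the ``spectrally close to $p_0$'' case pass to a complementary projection whose $\Delta$-value is controlled by the corner hypotheses. The execution, however, differs in three ways. First, the paper enlarges $p$ to $r=p+(\mathbf{1}-p)\wedge p_0$ so that $(\mathbf{1}-r)\wedge p_0=0$ and the decomposition $r=r(rp_0r)+r\wedge(\mathbf{1}-p_0)$ of Remark 1 in \cite{Doro} applies, and then splits the range projection of $rp_0r$; you skip the enlargement and split $e$ itself via $\chi_{[0,1/2]}(ep_0e)$ and $\chi_{(1/2,1]}(ep_0e)$. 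Second, in the difficult case the paper's candidate is $r_2'=r_2\vee(\mathbf{1}-p_0)-r_2$, whose estimate uses the boundedness of $\Delta$ on both corners $p_0Mp_0$ and $(\mathbf{1}-p_0)M(\mathbf{1}-p_0)$; your candidate $\mathbf{1}-h-(\mathbf{1}-h)\wedge p_0$ needs only the $p_0$-corner together with the single number $\Delta(\mathbf{1})$, so you in fact establish a marginally stronger statement. Third, where the paper quotes the distance computations from \cite{Doro} to obtain $d\geq\tfrac18$, you give a self-contained two-projection (Halmos) analysis yielding the better constant $\tfrac{1}{2\sqrt{2}}$ in both cases; the spectral inclusion $\sigma\bigl((\mathbf{1}-h)p_0(\mathbf{1}-h)\bigr)\subseteq[0,\tfrac12]\cup\{1\}$ that you isolate as the crux is correct, and the identification of the eigenvalue-$1$ subspace with $(\mathbf{1}-h)(H)\cap p_0(H)$ even admits a one-line direct proof (for a unit vector, $\langle(\mathbf{1}-h)p_0(\mathbf{1}-h)\xi,\xi\rangle=\|p_0\xi\|^2=1$ forces $p_0\xi=\xi$). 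The remaining points --- the $\beta$-invariance of $g$, $h$ and $r$, and the additivity bookkeeping --- all check out.
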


\begin{proof} (compare \cite[Lemma 9]{Doro})   Let us take $C>0$ satisfying $\sup \{ |\Delta(p)| : p\in H(M,\beta), p\leq p_0 \hbox{ or } p\leq \mathbf{1}-p_0\}\leq C$. Given $n$, by the unboundedness of $\Delta$, we can find a projection $p$ in $H(M,\beta)$ such that $|\Delta (p)|> 2 K_n + 6 C$. The projection $r=p+(\mathbf{1}-p)\wedge p_0\in H(M,\beta)$ satisfies $(\mathbf{1}-r) \wedge p=0$ and $|\Delta(r)|\geq |\Delta(p)|- |\Delta((\mathbf{1}-p)\wedge p_0)|> 2 K_n +5 C$. By \cite[Remark 1]{Doro} we have $$r=r(r p_0 r) + r\wedge (\mathbf{1}-p_0)$$ in $M$, as well as in $H(M,\beta)$. The proof of \cite[Lemma 9]{Doro} shows that taking $r_1=1_{(0,\frac12]} (r p_0 r)\in H(M,\beta)$ and $r_2=1_{(\frac12,1]} (r p_0 r)\in H(M,\beta)$ we have $d(r_1,p_0) \geq \frac12$ and $r_1+r_2 = r -r\wedge (\mathbf{1}-p_0)$. It is further seen that for $r_2' =r_2 \vee (\mathbf{1}-p_0) -r_2\in H(M,\beta)$ the inequality $d(r_2',p_0) \geq \frac18$ holds.

It is also clear that $r_1 \perp r_2$, and since $r-r\wedge (\mathbf{1}-p_0) \perp r\wedge (\mathbf{1}-p_0)$. Therefore $$|\Delta (r_1)| +|\Delta(r_2)|  \geq |\Delta (r_1) +\Delta(r_2)| = |\Delta (r_1 + r_2)|=|\Delta (r- r\wedge (1-p_0))|$$ $$=|\Delta (r) - \Delta (r\wedge (\mathbf{1}-p_0))| \geq |\Delta (r)| - |\Delta (r\wedge (\mathbf{1}-p_0))|> 2K_n +4 C.$$ It follows that $|\Delta (r_1)| > K_n+ 2C$ or $|\Delta (r_2)| > K_n+ 2C$. In the first case the projection $q=r_1$ satisfies the desired statement; otherwise, the projection $q=r_2'$ satisfies the conclusion of the lemma. Indeed, $$|\Delta (r_2\vee (\mathbf{1}-p_0)) | \leq |\Delta(p_0)| + |\Delta (r_2\vee (\mathbf{1}-p_0) -(\mathbf{1}-p_0)) | \leq 2 C,$$ because $(r_2\vee (\mathbf{1}-p_0) -(\mathbf{1}-p_0))\perp (\mathbf{1}-p_0)$. Thus, we get $$|\Delta(q)|=|\Delta(r_2')| \geq |\Delta(r_2)| - |\Delta(r_2\vee (\mathbf{1}-p_0))| > K_n.$$
\end{proof}

We complete now the proof of our Jordan version of Dorofeev's theorem. The arguments are based on appropriate Jordan adaptations of the proofs in \cite[Theorem 1]{Doro} and \cite[Theorem 6.1.16]{Ham03}.

\begin{proof}[Proof of Theorem \ref{t Jordan Dorofeev-Sherstnev}] Arguing by contradiction, we shall assume that $\Delta: \mathcal{P}(H(M,\beta))\to \mathbb{C}$ is an unbounded completely additive measure. By Proposition \ref{p restriction to sigma finite projections} there exists a $\sigma$-finite projection $p\in H(M,\beta)$ such that $\Delta|_{\mathcal{P}(H(pMp, \beta))}$ is unbounded.\smallskip

We can therefore assume that $H(M,\beta)$ is $\sigma$-finite. Let $\varphi$ be a faithful normal state on $H(M,\beta)$. Furthermore, by Remark \ref{remark direct sums}, we can also assume that $M$ is of type II$_1$, II$_\infty$ or III.\smallskip

Having in mind Proposition \ref{proposition l4 Dorofeev Jordan or L 6.1.9 in Hamhalter}, we can assume that the pair $(H(M,\beta),\Delta)$ satisfies property \eqref{property (1)} for $p_0=\mathbf{1}$ in that proposition  (otherwise we replace $M$ with $p_0 M p_0$). Applying Lemma \ref{lemma 6.1.15 Ham}, we may assume that $\Delta$ satisfies property \eqref{eq property(3)} for $p_0=\mathbf{1}$, the faithful normal state $\varphi$ fixed in the above paragraph, and a suitable $\delta>0$. By Proposition \ref{conjecture subalgebra of type II1} there exists a type II$_1$ subalgebra $N$ of $M$ such that $\beta (N) =N$.\smallskip

Let $q_0$ be the projection in $H(N,\beta)$ given by Lemma \ref{lemma 7 Dorofeev or 6.1.14 Ham}, that is, $q_0$ satisfies the following properties: \begin{enumerate}[$(a)$]\item $\Delta|_{\mathcal{P}(H(q_0 M q_0,\beta))}$ is bounded;
\item if $q\in \mathcal{P} (H(N,\beta))$, $q\gvertneqq q_0$ then $\Delta|_{\mathcal{P}(H(q M q,\beta))}$ is unbounded. %$\hfill\Box$
\end{enumerate}

The unboundedness of $\Delta$ implies that $q_0\neq \mathbf{1}$. By the Halving lemma (see \cite[Theorem 5.2.14]{HancheStor}) there exists an infinite sequence $(q_n)$ of mutually orthogonal nonzero projections in $H(N,\beta)$ such that $q_n\leq \mathbf{1}-q_0$, for every $n\in \mathbb{N}.$ Property $(b)$ above implies that $\Delta|_{\mathcal{P}(H((q_0+q_n) M (q_0+q_n),\beta))}$ is unbounded for all natural $n$.\smallskip

We claim that $\Delta|_{\mathcal{P}(H((\mathbf{1}-q_0) M (\mathbf{1}-q_0),\beta))}$ is bounded. Indeed, let $(r_n)$ be a sequence of projections in $H(N,\beta)$ such that $(r_n)\searrow 0$ and $r_n\leq\mathbf{1}-q_0$. The above property $(b)$ of $q_0$ also implies that $\Delta|_{\mathcal{P}(H((q_0+r_n) M (q_0+r_n),\beta))}$ is unbounded for all natural $n$. Since the pair $(H(M,\beta), \Delta)$ satisfies property \eqref{property (1)} for $p_0=\mathbf{1}$ in Proposition \ref{proposition l4 Dorofeev Jordan or L 6.1.9 in Hamhalter}, it follows that $$\sup \{ |\Delta(p)| : p\leq \mathbf{1}-q_0-r_n \}\leq 1, \ (n\in\mathbb{N}).$$ The boundedness of the previous set together with the condition $\mathbf{1}-q_0-r_n \nearrow \mathbf{1}-q_0$ imply, via Lemma \ref{l 2 Dorofeev in Jordan} that $\Delta|_{\mathcal{P}(H((\mathbf{1}-q_0) M (\mathbf{1}-q_0),\beta))}$ is bounded, which proves the claim.\smallskip

We have shown that $\Delta|_{\mathcal{P}(H(q_0 M q_0,\beta))}$ and $\Delta|_{\mathcal{P}(H((\mathbf{1}-q_0) M (\mathbf{1}-q_0),\beta))}$ are bounded measures. Applying Lemma \ref{lemma 9 Jordan Dorofeev 6.1.15 Ham} to $(q_0+q_n) M (q_0+q_n)$ and the projection $p_0=q_0$, we find a projection $p_n$ in $(q_0+q_n) M (q_0+q_n)$ satisfying $|\Delta(p_n)| > \frac{n 2^{12 n}}{\delta}$ and $d(p_n, q_0)\geq \frac18$ (let us observe that since $q_n \leq \mathbf{1}-q_0$,  $\Delta|_{\mathcal{P}(H(q_n M q_n,\beta))}$ is bounded). We define in this way a sequence $(p_n)$ in $\mathcal{P} (H(M,\beta))$.\smallskip

We shall prove next that, for each natural $n$, $d(p_n, \vee_{i\neq n} p_i)\geq \frac18$. To this end, let us pick norm-one elements $\xi\in p_n(H)$ and $\eta\in \vee_{i\neq n} p_i (H)$ (we regard $M$ as a von Neumann subalgebra of some $B(H)$). Having in mind that $p_n \leq q_0 + q_n$ with $q_n\perp q_0$ ($n\in \mathbb{N}$), we deduce that $ \vee_{i\neq n} p_i (H) \subset  q_0 (H) + \left(\sum_{i\neq n} q_i\right) (H)$, and thus, we can write $$\eta = \alpha u_1 + \beta u_2,$$ where $\alpha,\beta\geq 0$, $\alpha^2 +\beta^2 =1$, $u_1\in q_0(H)$ and $u_2\in \left(\sum_{i\neq n} q_i\right) (H)$. The images of $q_0$ and $\left(\sum_{i\neq n} q_i\right)$ are orthogonal in the Hilbert sense, and hence $$\|\xi-\eta \|^2 = \| \xi - \alpha u_1 - \beta u_2\|^2 = \| \xi - \alpha u_1 \|^2 + \| \beta u_2\|^2 $$ $$\geq \left(\|\xi-u_1 \|- \|(1-\alpha) u_1\| \right)^2 + \beta^2 = \left(\|\xi-u_1 \|- 1+\alpha \right)^2 +1- \alpha^2.$$ The last expression in the above inequality defines a function $f(\alpha)$, $\alpha\in [0,1]$, whose extreme values are attained at $\alpha=0$ or $\alpha=1$. Taking $\alpha=0$, we have $\|\xi-\eta \|^2 \geq \left(\|\xi-u_1 \|- 1 \right)^2 +1 \geq 1$. In the case $\alpha=1$, we have $\|\xi-\eta \|^2 = \|\xi-u_1 \|^2\geq \frac{1}{8^2}$, because $u_1\in q_0(H)$ and $d(q_0, p_n)\geq \frac18$.\smallskip

We apply now Lemma \ref{l 3 Jordan}. For each natural $n$, we can find a finite set $\{p_i^n : i=1,\ldots, k_n\}$ of mutually orthogonal projections in $H(M,\beta)$ satisfying $p_n =\sum_{i=1}^{k_n} p_i^n$, $\varphi (p_i^n) \leq \frac{\delta}{2^{11n}}$, and  $k_n \leq 2 \frac{2^{11 n}}{\delta}$. The projections in $\{p_i^n : i=1,\ldots, k_n\}$ are mutually orthogonal, so $$\frac{n 2^{12n}}{\delta} < |\Delta (p_n)| =\left| \sum_{i=1}^{k_n} \Delta(p_i^n) \right|\leq  \sum_{i=1}^{k_n} \left|\Delta(p_i^n) \right|,$$ and therefore there exists $i_n \in \{1,\ldots, k_n\}$ such that $|\Delta(p_{i_n}^n)| >n$. So, replacing $p_n$ with $p_{i_n}^n$, it may be assumed that $\varphi (p_n) \leq \frac{\delta}{2^{11n}}$ and $|\Delta(p_{n})| >n$.\smallskip

Now, we take $\varepsilon =\frac{1}{2^{10}}$. Lemma 1$(b)$ in \cite{Doro} asserts that $$p_1\vee \ldots \vee p_n \leq \frac{1}{\varepsilon} (p_1+p_2\vee \ldots \vee p_n) \leq \frac{1}{\varepsilon} p_1 + \frac{1}{\varepsilon^2} (p_2 + p_3\vee \ldots \vee p_n) \leq \ldots \leq \sum_{k=1}^{n} \frac{1}{\varepsilon^k} p_k.$$ Therefore,
$$\varphi(p_1\vee \ldots \vee p_n) \leq \sum_{k=1}^{n}  \frac{1}{\varepsilon^k} \varphi(p_k)\leq \sum_{k=1}^{n}  \frac{1}{\varepsilon^k} \frac{\delta}{2^{11k}}= \sum_{k=1}^{n}  2^{10 k} \frac{\delta}{2^{11k}}< \delta.
$$
This shows that for $\displaystyle r=\vee_{n=1}^{\infty} p_n$, $\varphi(r) \leq  \delta$ and $\Delta|_{\mathcal{P}(H(r M r,\beta))}$ is unbounded, which contradicts that $\Delta$ satisfies property \eqref{eq property(3)} for $p_0=1$ and $\delta>0$.
\end{proof}

\section{2-local triple derivations on continuous JBW$^*$-triples}\label{section5}

Recall that a $JBW^\ast$-triple $\mathcal{A}$ is said to be
\textit{continuous} if it has no type I direct summand, and that in this
case, up to isometry, $\mathcal{A}$ is a $JW^\ast$-triple with
unique decomposition, $\mathcal{A} = H(W,\alpha)\oplus pV,$ where $W$
and $V$ are continuous von Neumann algebras, $p$ is a projection
in $V$, $\alpha$ is  a $^*$-antiautomorphism of $W$ of order 2,  and
$H(W,\alpha)=\{x\in W: \alpha(x)=x\}$ (see \cite[(1.20)]{Horn88}).

We have shown in section~\ref{section3} that every 2-local triple derivation on $pV$ is a triple derivation.  In this section we show that every 2-local triple derivation on $H(W,\alpha)$ is a triple derivation, and hence that every
 2-local triple derivation on a continuous JBW$^*$-triple is a triple derivation.

\subsection{Triple derivations on $H(M,\beta)$}

%$M_a (x) := a \circ x$ ($x\in J$):????\smallskip

Assume that $M$ is a continuous
von Neumann algebra and $\beta: M\to M$ is a $\mathbb{C}$-linear
$^*$-involution (i.e. a $^*$-antiautomorphism of order 2). In this subsection we shall show that every 2-local triple derivation on the subspace $H(M,\beta)$ of all $\beta$-fixed
points in $M$ is a triple derivation. \smallskip

We begin by taking advantage of the Jordan structure of $H(M,\beta)$ (see the beginning of section~\ref{susect: Dorofeev}) to
provide a precise description of triple derivations on
it.\smallskip

Let $\delta: H(M,\beta) \to H(M,\beta)$ be a triple derivation. By
\cite[Lemma 1 and its proof]{HoMarPeRu},
\begin{equation}\label{equ delta 1 is skew} \delta(\mathbf{1})^* =
-\delta(\mathbf{1}), \hbox{ and } M_{\delta(\mathbf{1})} = \delta\left(\frac12
\delta(\mathbf{1}),\mathbf{1}\right) \hbox{ is a triple derivation.}
\end{equation} This implies that $D= \delta -M_{\delta(\mathbf{1})} = \delta - \delta\left(\frac12 \delta(\mathbf{1}),\mathbf{1}\right)$ is a triple derivation satisfying $D(\mathbf{1}) = 0$. Lemma 2 in \cite{HoMarPeRu} implies that $D$ is a Jordan $^*$-derivation on $H(M,\beta)$. Thus, $D|_{H(M,\beta)_{sa}}: H(M,\beta)_{sa} \to H(M,\beta)_{sa}$ is a Jordan derivation on the continuous JBW-algebra $H(M,\beta)_{sa}$. Theorem 3.5 in \cite{Upmeier80} assures that $D|_{H(M,\beta)_{sa}}$ is an inner derivation, that is, there exist $a_1,\ldots,a_m$ $b_1,\ldots, b_m$ in $H(M,\beta)_{sa}$ satisfying \begin{equation}
\label{equ inner derivation} D (x) = \sum_{j=1}^{m}
\left[M_{a_j},M_{b_j}\right] (x) = \sum_{j=1}^{m}
{a_j}\circ({b_j}\circ x) - {b_j}\circ({a_j}\circ x)
\end{equation} $$= \frac14  \sum_{j=1}^{m} (a_j b_j - b_j a_j) x - x(a_j b_j - b_j a_j) =  \sum_{j=1}^{m} \left[\frac{(a_j b_j - b_j a_j)}{4}, x \right]  =  \left[\sum_{j=1}^{m} \frac{(a_j b_j - b_j a_j)}{4}, x \right],$$ for every $x\in H(M,\beta)_{sa}$. If we denote $\displaystyle a =\sum_{j=1}^{m} \frac{(a_j b_j - b_j a_j)}{4}\in M$, then $\beta(a) = -a$ and $a^* = -a$ (just observe that  $\beta(a_j) = a_j$, $a_j^* = a_j$, $\beta(b_j) = b_j$, and $b_j^* = b_j$, for every $j$), and, by \eqref{equ inner derivation}, $$\delta (x) = [a,x] + \delta(1) \circ x,$$ for every $x\in {H(M,\beta)_{sa}}$. The following proposition summarizes the above facts. \smallskip

\begin{proposition}\label{structure of triple derivations on continuous type 2} Let $M$ be a continuous von Neumann algebra and let $\beta: M\to M$ be a $\mathbb{C}$-linear $^*$-involution. Then for every triple derivation $\delta$ on the JBW$^*$-algebra $H(M,\beta)$, of all $\beta$-fixed points in $M$, there exist $a,b\in M$ with $a^* =-a$, $b^*= -b$, $\beta (a) = -a$ and $\beta (b) = b= \delta(\mathbf{1}),$ satisfying $$\delta (x)= [a,x] + b \circ x,$$ for every $x\in H(M,\beta)$. Consequently, every triple derivation on $H(M,\beta)$ admits an extension to a triple derivation on $M$.$\hfill\Box$\end{proposition}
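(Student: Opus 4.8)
The plan is to reduce, in three steps, the description of an arbitrary triple derivation $\delta$ on $H(M,\beta)$ to a known normal form: first strip off the contribution of $\delta(\mathbf{1})$, then use that a unital‑annihilating triple derivation is a Jordan $^*$‑derivation, and finally quote Upmeier's innerness theorem and translate the resulting abstract Jordan commutator into an honest commutator inside $M$.

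First I would use \eqref{equ delta 1 is skew}: since $H(M,\beta)$ is a unital JB$^*$-algebra, \cite[Lemma 1]{HoMarPeRu} gives $\delta(\mathbf{1})^*=-\delta(\mathbf{1})$ and that the multiplication operator $M_{\delta(\mathbf{1})}\colon x\mapsto \delta(\mathbf{1})\circ x$ is itself a triple derivation on $H(M,\beta)$. Hence $D:=\delta-M_{\delta(\mathbf{1})}$ is a triple derivation with $D(\mathbf{1})=0$, and \cite[Lemma 2]{HoMarPeRu} shows $D$ is a Jordan $^*$-derivation; in particular its restriction $D|_{H(M,\beta)_{sa}}$ is a Jordan derivation of the continuous JBW-algebra $H(M,\beta)_{sa}$. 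Next I would invoke \cite[Theorem 3.5]{Upmeier80}: as $M$ is continuous, $H(M,\beta)_{sa}$ has no type I part, hence no infinite-dimensional spin-factor summand, so every Jordan derivation of it is inner, $D|_{H(M,\beta)_{sa}}=\sum_{j=1}^{m}\left[M_{a_j},M_{b_j}\right]$ with $a_1,\dots,a_m,b_1,\dots,b_m$ in $H(M,\beta)_{sa}$.

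The only genuine computation is the associative-algebra identity $a\circ(b\circ x)-b\circ(a\circ x)=\frac14[[a,b],x]$, valid for all $a,b,x\in M$; applying it term by term yields $D(x)=[a,x]$ on $H(M,\beta)_{sa}$, where $a:=\frac14\sum_{j=1}^{m}(a_jb_j-b_ja_j)\in M$, as in \eqref{equ inner derivation}. Since each $a_j,b_j$ is self-adjoint, $a^*=-a$; since each $a_j,b_j$ lies in $H(M,\beta)$ and $\beta$ is a $^*$-antiautomorphism, $\beta(a_jb_j-b_ja_j)=b_ja_j-a_jb_j$, so $\beta(a)=-a$. Setting $b:=\delta(\mathbf{1})$ — skew-hermitian, and $\beta$-fixed because $\delta$ preserves $H(M,\beta)$ and $\mathbf{1}\in H(M,\beta)$ — we get $\delta(x)=[a,x]+b\circ x$ for all $x\in H(M,\beta)_{sa}$, hence for all $x\in H(M,\beta)=H(M,\beta)_{sa}\oplus iH(M,\beta)_{sa}$ by complex-linearity of both sides. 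Finally, the same formula $x\mapsto[a,x]+b\circ x=[a,x]+\frac12(bx+xb)$ is meaningful on all of $M$ and is the sum of the inner derivation $[a,\cdot]$ and the triple derivation $M_b$ (with $b$ skew), hence a triple derivation on $M$ extending $\delta$.

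I expect the only delicate point to be structural rather than computational: checking that the hypotheses of \cite[Theorem 3.5]{Upmeier80} genuinely hold, i.e.\ that continuity of $M$ (absence of a type I summand) forces $H(M,\beta)_{sa}$ to avoid the exceptional summands (infinite-dimensional spin factors) that obstruct innerness of Jordan derivations of JBW-algebras. A secondary point is that \cite[Lemmas 1 and 2]{HoMarPeRu}, phrased for JB$^*$-algebras, apply here because $H(M,\beta)$ is a weak$^*$-closed unital Jordan $^*$-subalgebra of $M$. Granting these, the skewness and $\beta$-invariance checks, the commutator identity, the complexification step, and the verification that $[a,\cdot]+M_b$ is a triple derivation on $M$ are all routine.
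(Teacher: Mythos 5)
Your proposal is correct and follows essentially the same route as the paper: subtract the triple derivation $M_{\delta(\mathbf{1})}$ using \cite[Lemmas 1 and 2]{HoMarPeRu} to reduce to a Jordan $^*$-derivation vanishing at $\mathbf{1}$, apply Upmeier's innerness theorem to $H(M,\beta)_{sa}$, and convert the sum of Jordan multiplication commutators into the single associative commutator $[a,\cdot]$ with $a=\frac14\sum_j(a_jb_j-b_ja_j)$, checking $a^*=-a$ and $\beta(a)=-a$ exactly as in the text. The extension to $M$ via the same formula is also how the paper's ``consequently'' clause is meant to be read.
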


\subsection{2-local triple derivations on $H(M,\beta)$}

Let $J$ be a JBW$^*$-subalgebra of a von Neumann algebra $M$.
Suppose that $J$ contains the unit of $M$. Given a self-adjoint
element $z\in J$, the JBW$^*$-subalgebra, $\mathcal{W}^\ast(z)$,
of $J$ generated by $z$ and the unit element is an associative
JBW$^*$-algebra isometrically isomorphic to a commutative von
Neumann algebra (cf. \cite[Lemma 4.1.11]{HancheStor}). It is known
that $\mathcal{W}^\ast(z)$ coincides with the abelian von Neumann
subalgebra of $M$ generated by the element $z$ and the unit
element.\smallskip

Let $\Delta: H(M,\beta) \to H(M,\beta)$ be a {\rm(}not necessarily
linear nor continuous{\rm)} 2-local triple derivation. By
\eqref{equ delta 1 is skew} we deduce that $\Delta(\mathbf{1})^* =
-\Delta(\mathbf{1})$ and $M_{\Delta(\mathbf{1})} = \delta\left(\frac12
\Delta(\mathbf{1}),\mathbf{1}\right)$ is a triple derivation. Replacing $\Delta$
with $\Delta-\delta\left(\frac12 \Delta(\mathbf{1}),\mathbf{1}\right)$ we can assume
that our 2-local triple derivation satisfies $\Delta(\mathbf{1})=0$. Having
in mind the description provided by the above Proposition
\ref{structure of triple derivations on continuous type 2}, the
arguments given in \cite[Lemmas 2.2, 2.3, and 2.6]{KOPR2014} can
be literally adapted to prove the following:

\begin{lemma}\label{l 2.2, 2.3 and 2.6 in one} Let $M$ be a continuous von Neumann algebra and let $\beta: M\to M$ be a $\mathbb{C}$-linear $^*$-involution. Suppose that $\Delta: H(M,\beta) \to H(M,\beta)$ is a {\rm(}not necessarily linear nor continuous{\rm)} 2-local triple derivation. Then the following statements hold:\begin{enumerate}[$(a)$]\item If $\Delta(\mathbf{1})=0,$ then $\Delta(x)=\Delta(x)^\ast$ for all $x\in  H(M,\beta)_{sa};$
\item If $\Delta(\mathbf{1})=0,$ then for every $x, y\in H(M,\beta)_{sa}$ there exists a skew-hermitian element $a_{x,y}\in M$ with $\beta (a_{x,y}) = -a_{x,y}$ such that $\Delta (x)=[a_{x,y},x],$ and $\Delta(y)=[a_{x,y},y];$
\item  Let $z\in H(M,\beta)$ be a self-adjoint element and let $\mathcal{W}^\ast(z)=\{z\}''$ be the abelian von Neumann subalgebra of $M$
generated by the element $z$ and the unit element. Then there
exist skew-hermitian elements $a_z, b_z\in M$, depending on $z$,
such that
\begin{equation*}\label{spat}
\Delta(x)=[a_z,x] + b_z\circ x = a_z x-x a_z + \frac12 (b_z x+x
b_z)
\end{equation*}
for all $x\in \mathcal{W}^\ast(z)\subseteq H(M,\beta).$  In
particular, $\Delta$ is linear and continuous on
$\mathcal{W}^\ast(z).$
\end{enumerate}$\hfill\Box$
\end{lemma}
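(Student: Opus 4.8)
The plan is to imitate the proofs of \cite[Lemmas 2.2, 2.3 and 2.6]{KOPR2014}, replacing the description of triple derivations of von Neumann algebras by Proposition~\ref{structure of triple derivations on continuous type 2}; recall the latter writes any triple derivation $D$ of $H(M,\beta)$ as $D(w)=[a,w]+b\circ w$ with $a,b$ skew-hermitian, $\beta(a)=-a$, $\beta(b)=b$ and $b=D(\mathbf 1)$. For $(a)$: given $x\in H(M,\beta)_{sa}$, choose a triple derivation $D$ with $D(\mathbf 1)=\Delta(\mathbf 1)=0$ and $D(x)=\Delta(x)$; since $D(\mathbf 1)=0$, Proposition~\ref{structure of triple derivations on continuous type 2} gives $D=[a,\,\cdot\,]$ for a skew-hermitian $a$, so $\Delta(x)=ax-xa$ and $\Delta(x)^{*}=xa^{*}-a^{*}x=ax-xa=\Delta(x)$. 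For $(b)$: given self-adjoint $x,y$, pick a triple derivation $D$ with $D(x)=\Delta(x)$, $D(y)=\Delta(y)$ and write $D(w)=[a,w]+b\circ w$ with $a,b$ skew-hermitian, $\beta(a)=-a$ and $b=D(\mathbf 1)$; as $x$ is self-adjoint, $[a,x]$ is self-adjoint while $b\circ x$ is skew-hermitian, so the self-adjointness of $\Delta(x)=[a,x]+b\circ x$ provided by part $(a)$ forces $b\circ x=0$, and likewise $b\circ y=0$; hence $a_{x,y}:=a$, which satisfies $\beta(a_{x,y})=-a_{x,y}$, does the job.

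For $(c)$ I would first reduce to $\Delta(\mathbf 1)=0$: by \eqref{equ delta 1 is skew} the operator $M_{\Delta(\mathbf 1)}\colon x\mapsto\Delta(\mathbf 1)\circ x$ is a triple derivation with $M_{\Delta(\mathbf 1)}(\mathbf 1)=\Delta(\mathbf 1)$, so $\Delta-M_{\Delta(\mathbf 1)}$ is again a $2$-local triple derivation (it is $\Delta$ minus a fixed triple derivation) that annihilates $\mathbf 1$, and once $(c)$ is known for it with $b_z=0$, the general case follows with $b_z=\Delta(\mathbf 1)$, which is skew-hermitian by \eqref{equ delta 1 is skew}. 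Assume then $\Delta(\mathbf 1)=0$; I claim $\Delta(x)=[a_z,x]$ for all $x\in\mathcal W^\ast(z)$. The crucial device is to replace $z$ by the self-adjoint element $y:=z+(\|z\|+1)\mathbf 1$, which is \emph{positive and invertible} and generates the same von Neumann algebra, so $\{y\}''=\mathcal W^\ast(z)$ and hence $\{y\}'=\mathcal W^\ast(z)'$; this is precisely what lets the argument reach all of $\mathcal W^\ast(z)$ and not merely its self-adjoint part. For an arbitrary $x\in\mathcal W^\ast(z)\subseteq H(M,\beta)$ choose a triple derivation $D$ with $D(x)=\Delta(x)$ and $D(y)=\Delta(y)$, and write $D(w)=[\alpha_x,w]+b_x\circ w$ with $\alpha_x,b_x$ skew-hermitian; exactly as in $(b)$, self-adjointness of $\Delta(y)$ forces $b_x\circ y=0$, that is $b_x y=-y b_x$.

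The step I expect to be the only real difficulty is deducing $b_x=0$ from $b_x y=-y b_x$ with $b_x$ skew-hermitian and $y$ positive invertible. This follows from the identities $b_x y b_x^{*}=-b_x y b_x$ and $b_x y b_x=y\,b_x^{*}b_x$ (the first because $b_x^{*}=-b_x$, the second on substituting $y b_x=-b_x y$): the left side $b_x y b_x^{*}=(b_x y^{1/2})(b_x y^{1/2})^{*}$ is positive; moreover $b_x^{*}b_x$ commutes with $y$ (again by $b_x y=-y b_x$), so $y\,b_x^{*}b_x\ge 0$ as well; since $y\,b_x^{*}b_x=-b_x y b_x^{*}\le 0$, we get $y\,b_x^{*}b_x=0$, and invertibility of $y$ gives $b_x=0$. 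Hence $D=[\alpha_x,\,\cdot\,]$, so $\Delta(x)=[\alpha_x,x]$ and $\Delta(y)=[\alpha_x,y]$ for every $x$. Putting $a_z:=\alpha_{\mathbf 1}$ (skew-hermitian, with $\Delta(y)=[a_z,y]$), we get for any $x$ that $[\alpha_x-a_z,y]=\Delta(y)-\Delta(y)=0$, so $\alpha_x-a_z\in\{y\}'=\mathcal W^\ast(z)'$ commutes with $x$, whence $\Delta(x)=[\alpha_x,x]=[a_z,x]$. Finally $x\mapsto[a_z,x]$ (respectively $x\mapsto[a_z,x]+\Delta(\mathbf 1)\circ x$ before the reduction) is linear and bounded, so $(c)$ holds.
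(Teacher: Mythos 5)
Your proof is correct and follows the same route the paper intends: it adapts the arguments of \cite[Lemmas 2.2, 2.3 and 2.6]{KOPR2014} by substituting Proposition~\ref{structure of triple derivations on continuous type 2} for the von Neumann algebra description of triple derivations, which is exactly what the paper does (the paper merely cites this adaptation, whereas you supply the details, including the positive-invertible-generator trick and the verification that $b_x\circ y=0$ with $y>0$ invertible forces $b_x=0$, which is \cite[Lemma 2.5]{KOPR2014}).
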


The results in Lemma \ref{l 2.2, 2.3 and 2.6 in one} will be now
applied to obtain a Jordan version of \cite[Proposition
2.7]{KOPR2014}. Given a
JBW$^*$-algebra $J$ whose lattice of projections is denoted by
$\mathcal{P} (J),$ and a Banach space $X$, a \emph{finitely
additive $X$-valued measure} on $\mathcal{P} (J)$ is defined in the same way as in the case of a von Neumann algebra, namely,  a mapping
$\mu: \mathcal{P} (J)\to X$ satisfying
$$\mu \left(\sum\limits_{i=1}^n p_i\right) = \sum\limits_{i=1}^{n} \mu (p_i),$$
for every family $p_1,\ldots, p_n$ of mutually orthogonal
projections in $J.$ %A mapping $\mu: \mathcal{P} (J)\to X$ is said
%to be \emph{bounded} if the set $$ \left\{ \|\mu (p)\|: p \in
%\mathcal{P} (J) \right\}
%$$ is bounded.
\smallskip

Let $(p_i)_{i \in I}$ be a family of mutually orthogonal
projections in a JBW$^*$-algebra $J$. The series $\sum_{i\in I}
p_i$ is summable with respect to the strong$^*$ topology of $J$,
and we further know that the limit  $\displaystyle p =
\hbox{strong}^*\hbox{-}\sum_{i\in I} p_i$ is another projection in
$J$ (cf. \cite[remark 4.2.9]{HancheStor}). In particular,
$\sum_{i\in I} p_i$ is summable with respect to the weak$^*$
topology of $J$ and
$\displaystyle\hbox{strong}^*\hbox{-}\sum_{i\in I} p_i =
\hbox{weak}^*\hbox{-}\sum_{i\in I} p_i.$\smallskip

Let $J_1$ and $J_2$ be JBW$^*$-algebras, and let $\tau$ denote the
norm, the weak$^*$  or the strong$^*$ topology of $J_1.$ As in the
case of von Neumann algebras, a mapping $\mu : J_1 \to J_2$ is
said to be \emph{$\tau$-completely additive} (respectively,
\emph{countably or sequentially $\tau$-additive}) when
\begin{equation}\label{eq completely additive}
\mu\left(\sum\limits_{i\in I} p_i\right) =\tau\hbox{-}
\sum\limits_{i\in I}\mu(p_i)
\end{equation} for every family (respectively, sequence)  $\{p_i\}_{i\in I}$ of mutually orthogonal
projections in $J_1.$\smallskip

We can easily obtain now a Jordan version of \cite[Proposition
2.7]{KOPR2014}.

\begin{proposition}\label{p AyupovKudaybergenov sigma complete additivity ternary} Let $M$ be a continuous von Neumann algebra and let $\beta: M\to M$ be a $\mathbb{C}$-linear $^*$-involution.
Let $\Delta: H(M,\beta)\to H(M,\beta)$ be a {\rm(}not necessarily
linear nor continuous{\rm)} 2-local triple derivation. Then the
following statements hold:\begin{enumerate}[$(a)$]
\item The restriction $\Delta|_{\mathcal{P}(J)}$ is sequentially strong$^*$-additive, and consequently sequentially weak$^*$-additive;
\item $\Delta|_{\mathcal{P}(J)}$ is weak$^*$-completely additive, i.e.,
\begin{equation}\label{ca}
\Delta\left(\hbox{weak$^*$-}\sum\limits_{i\in I} p_i\right)
=\hbox{weak$^*$-} \sum\limits_{i\in I}\Delta(p_i)
\end{equation} for every family $(p_i)_{i\in I}$ of mutually orthogonal
projections in $J.$
\end{enumerate}
\end{proposition}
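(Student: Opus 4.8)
The plan is to mimic the proof of \cite[Proposition 2.7]{KOPR2014}, translating each step into the Jordan setting $J = H(M,\beta)$ with the help of Lemma \ref{l 2.2, 2.3 and 2.6 in one}. For part $(a)$, fix a sequence $(p_n)$ of mutually orthogonal projections in $J$ and put $p = \sum_{n} p_n$ (the strong$^*$-limit, which is again a projection in $J$ by \cite[remark 4.2.9]{HancheStor}). Write $q_N = \sum_{n=1}^{N} p_n$, so that $q_N \nearrow p$ strong$^*$. One considers the self-adjoint element $z = \sum_{n} \frac{1}{2^n} p_n \in H(M,\beta)$ (the series converging in norm), and observes that $p$, every $q_N$, and every $p_n$ all lie in the abelian von Neumann subalgebra $\mathcal{W}^*(z) = \{z\}''$, since they are spectral projections of $z$. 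By Lemma \ref{l 2.2, 2.3 and 2.6 in one}$(c)$ there are skew-hermitian $a_z, b_z \in M$ with $\Delta(x) = [a_z, x] + b_z \circ x$ for all $x \in \mathcal{W}^*(z)$; in particular $\Delta$ restricted to $\mathcal{W}^*(z)$ is the restriction of a (strong$^*$-continuous, bounded) triple derivation on $M$. Since $q_N \to p$ strong$^*$ and multiplication is jointly strong$^*$-continuous on bounded sets, $\Delta(q_N) = [a_z, q_N] + b_z\circ q_N \to [a_z, p] + b_z \circ p = \Delta(p)$ strong$^*$, and by finite additivity (Lemma \ref{zerotrip}-type reasoning, or simply because $\Delta$ agrees on $\mathcal{W}^*(z)$ with a linear map) $\Delta(q_N) = \sum_{n=1}^N \Delta(p_n)$. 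Hence $\sum_{n=1}^N \Delta(p_n) \to \Delta(p)$ strong$^*$, which is exactly sequential strong$^*$-additivity; sequential weak$^*$-additivity follows since the strong$^*$ topology is finer.

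For part $(b)$, let $(p_i)_{i\in I}$ be an arbitrary family of mutually orthogonal projections in $J$ with weak$^*$-sum $p$. The coefficient functionals in $J_*$ separate points, so it suffices to show that for each $\varphi \in J_*$ we have $\varphi(\Delta(p)) = \sum_{i\in I} \varphi(\Delta(p_i))$, which in turn reduces (by a standard argument with $\sigma$-finiteness of the support of $\varphi$) to the case where only countably many $p_i$ are nonzero relative to $\varphi$; one then invokes part $(a)$. More precisely, for each $\varphi$ the set $\{ i : \varphi(p_i) \neq 0\}$ is at most countable, enumerate it as $(p_{i_n})$, set $r = \sum_n p_{i_n}$ (weak$^*$-sum) and note $\varphi(p - r) = 0$ with $p - r$ a projection orthogonal to all $p_{i_n}$; choosing a triple derivation $D_{p, p-r}$ agreeing with $\Delta$ at $p$ and $p-r$, or better, applying $(a)$ to the sequence $(p_{i_n})$ together with the single projection $p - r$ and using that $\varphi$ annihilates the relevant tail, one gets $\varphi(\Delta(p)) = \varphi(\Delta(r)) + \varphi(\Delta(p-r))$ and $\varphi(\Delta(r)) = \sum_n \varphi(\Delta(p_{i_n}))$. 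Summing over the (countable, hence harmless) index set and using that $\varphi(\Delta(p_i))$ might still be nonzero for $i$ outside the enumeration — here one must argue that in fact only countably many such terms are nonzero, again via $\sigma$-finiteness applied to $\varphi$ composed with $\Delta$ restricted to a suitable abelian subalgebra — yields \eqref{ca} tested against $\varphi$.

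The main obstacle I anticipate is the passage from the sequential statement $(a)$ to the uncountable statement $(b)$: unlike the linear case, $\Delta$ is not known a priori to be additive or continuous globally, so one cannot directly say "$\Delta$ is a measure hence its weak$^*$-complete additivity follows from sequential additivity plus boundedness." The correct route is to exploit that \emph{on every abelian $\mathcal{W}^*(z)$} the map $\Delta$ coincides with a genuine (bounded, weak$^*$-continuous) triple derivation, by Lemma \ref{l 2.2, 2.3 and 2.6 in one}$(c)$; choosing $z$ adapted to the given family — for a fixed $\varphi$, take $z = \sum_n \lambda_n p_{i_n}$ with distinct $\lambda_n \in (0,1]$ so that all partial sums, the total sum, and $p - r$ lie in $\mathcal{W}^*(z)$ — lets one transfer the weak$^*$-continuity of $x \mapsto [a_z,x] + b_z\circ x$ to $\Delta$ on exactly the projections in play. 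Once this localization is set up, the rest is the bookkeeping of \cite[Proposition 2.7]{KOPR2014}, which goes through verbatim; no genuinely new estimate is needed beyond what Lemma \ref{l 2.2, 2.3 and 2.6 in one} already supplies.
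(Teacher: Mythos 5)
Your argument is correct and follows essentially the same route as the paper: part $(a)$ is obtained by placing all the projections inside the abelian subalgebra $\mathcal{W}^\ast(z)$ generated by $z=\sum_n \lambda_n p_n$ (the paper takes $\lambda_n=1/n$, you take $1/2^n$; either choice makes every $p_n$, every partial sum and the total sum a spectral projection of $z$) and then invoking Lemma~\ref{l 2.2, 2.3 and 2.6 in one}$(c)$ together with the joint strong$^*$-continuity of the product on bounded sets and the coincidence of $S^*(M,M_*)|_{H(M,\beta)}$ with $S^*(H(M,\beta),H(M,\beta)_*)$, while part $(b)$ reduces to $(a)$ through normal functionals and support projections exactly as in the proof of \cite[Proposition 2.7]{KOPR2014}, to which the paper simply defers. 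The one step you leave as a gesture is easily completed: if $\varphi$ is a positive normal functional with support projection $s$ and $q$ is a projection with $\varphi(q)=0$, then $qs=sq=0$, so writing $\Delta(q)=[a,q]+b\circ q$ on a suitable $\mathcal{W}^\ast(z)$ gives $\varphi(\Delta(q))=\varphi\bigl(s([a,q]+b\circ q)s\bigr)=0$, which is what forces all but countably many of the terms $\varphi(\Delta(p_i))$ to vanish and lets the countable case $(a)$ (applied to every rearrangement, yielding unconditional convergence) finish the proof.
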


\begin{proof} $(a)$ Let $(p_n)_{n\in \mathbb{N}}$ be a sequence of mutually
orthogonal projections in $H(M,\beta).$ Let us consider the
element $z=\sum\limits_{n\in \mathbb{N}}\frac{\textstyle
1}{\textstyle n} p_n.$ Let $\mathcal{W}^\ast(z)$ be the
JBW$^*$-subalgebra of $H(M,\beta)$ generated by $z$. By Lemma
\ref{l 2.2, 2.3 and 2.6 in one}$(c)$, there exist skew-hermitian
elements $a_{z},b_{z}\in M$ with $\beta (a_z) = -a_z$ and
$\beta(b_z) = b_z$, satisfying $$T(x)=[a_{z},x] + b_{z}\circ x,$$
for all $x\in \mathcal{W}^\ast(z).$\smallskip

The elements $\sum\limits_{n=1}^{\infty} p_n$, and $ p_m$ belong
to $\mathcal{W}^\ast(z),$ for all $m\in \mathbb{N}.$ The reader
should be warned that $a_z$ might not belong to $H(M,\beta).$ In any case, the
product of $M$ is jointly strong$^*$ continuous on bounded sets,
and by \cite[Corollary]{Bunce01} $S^*(M,M_*)|_{H(M,\beta)} \equiv
S^*(H(M,\beta),H(M,\beta)_*)$. Therefore,
$$\Delta \left(\hbox{$S^*(M,M_*)$-}\sum\limits_{n=1}^{\infty} p_n\right)=\left[a_{z}, \hbox{$S^*(M,M_*)$-}\sum\limits_{n=1}^{\infty} p_n\right] +
b_{z}\circ \hbox{$S^*(M,M_*)$-}\left(\sum\limits_{n=1}^{\infty}
p_n \right)$$
$$=\hbox{$S^*(M,M_*)$-}\sum\limits_{n=1}^{\infty}[a_{z}, p_n] +
\hbox{$S^*(M,M_*)$-}\sum\limits_{n=1}^{\infty} b_{z}\circ
p_n=\hbox{$S^*(M,M_*)$-}\sum\limits_{n=1}^{\infty}  \Delta(p_n),
$$ i.e. $\Delta|_{\mathcal{P}(M)}$ is a countably or sequentially strong$^*$ additive mapping.\medskip

$(b)$ As we have commented above, the strong$^*$-topology of the
JBW$^*$-algebra $H(M,\beta)$ coincides with the restriction to
$H(M,\beta)$ of the strong$^*$-topology of $M$. When in the proof
of \cite[Proposition 2.7]{KOPR2014}$(b)$, we replace \cite[Lemmas
2.2 and 2.3]{KOPR2014} with Lemma \ref{l 2.2, 2.3 and 2.6 in one}
(and having in mind the conclusion of Proposition \ref{structure
of triple derivations on continuous type 2}), the arguments
remaind valid to obtain the desired statement here.
\end{proof}

%\subsection{2-local derivations on continuous JBW$^*$-triples}

Let $\Delta: H(M,\beta)\to H(M,\beta)$ be a {\rm(}not necessarily
linear nor continuous{\rm)} 2-local triple derivation, where $M$
is a continuous von Neumann algebra and $\beta: M\to M$ is a
$\mathbb{C}$-linear $^*$-involution. For each normal state
$\phi\in H(M,\beta)_*$ (or $\phi\in M_*$), Proposition \ref{p
AyupovKudaybergenov sigma complete additivity ternary} implies  that the
mapping  $\phi \circ \Delta|_{_{\mathcal{P}( H(M,\beta))}} :
\mathcal{P}( H(M,\beta))\to \mathbb{C}$ is a completely additive
measure. We conclude from Theorem \ref{t Jordan
Dorofeev-Sherstnev}, and from the arbitrariness of $\phi$ together
with the  uniform boundedness principle, that $
\Delta|_{_{\mathcal{P}( H(M,\beta))}} : \mathcal{P}(
H(M,\beta))\to \mathbb{C}$ is a bounded weak$^*$-completely
additive measure. An appropriate Jordan version of the
Bunce-Wright-Mackey-Gleason theorem (see Theorem \ref{t Jordan Mackey-Gleason}) implies the existence of a bounded linear operator
$G: H(M,\beta) \to H(M,\beta)$ satisfying that $G(p) = \Delta (p)$
for every $p\in  \mathcal{P}( H(M,\beta))$.\smallskip

Let us pick a self-adjoint element $z$ in $H(M,\beta)$. By Lemma
\ref{l 2.2, 2.3 and 2.6 in one}$(c)$, there exist skew-hermitian
elements $a_{z},b_{z}\in M$, with $\beta(a_z) = -a_z$ and
$\beta(b_z) = b_z$, such that $\Delta(x) = [a_{z},x]+b_{z}\circ
x,$ for every $x\in \mathcal{W}^\ast(z),$ the JBW$^*$-subalgebra
of $H(M,\beta)$ generated by $z$. Since $G|_{\mathcal{W}^\ast(z)}$
and $\Delta|_{\mathcal{W}^\ast(z)} $ are bounded linear operators
from $ \mathcal{W}^\ast(z)$ to $M$, which coincide on the set of
projections of $\mathcal{W}^\ast(z)$, and every self-adjoint
element in $\mathcal{W}^\ast(z)$ can be approximated in norm by
finite linear combinations of mutually orthogonal projections in
$\mathcal{W}^\ast(z)$, we conclude that $\Delta(x) = G(x)$ for
every $x\in \mathcal{W}^\ast(z),$ and hence
$$ \Delta(z) = G(z), \hbox{ for every } z\in H(M,\beta)_{sa}, $$
in particular, $\Delta$ is additive on
$H(M,\beta)_{sa}.$  This proves the following Proposition. \smallskip

\begin{proposition}\label{p addit no type I_n Jordan} Let $\Delta: H(M,\beta)\to H(M,\beta)$ be a {\rm(}not necessarily linear nor continuous{\rm)} 2-local triple derivation, where $M$ is a continuous von Neumann algebra and $\beta: M\to M$ is a $\mathbb{C}$-linear $^*$-involution. Then the restriction
$\Delta|_{H(M,\beta)_{sa}}$ is additive.$\hfill\Box$
\end{proposition}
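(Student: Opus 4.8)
The plan is to produce a single bounded \emph{linear} operator $G:H(M,\beta)\to H(M,\beta)$ that agrees with $\Delta$ on all projections and, via the functional calculus, on every maximal associative part; additivity of $\Delta|_{H(M,\beta)_{sa}}$ then follows because $G$ is linear. First I would fix a normal state $\phi$ on $H(M,\beta)$ and look at the complex set function $\phi\circ\Delta|_{\mathcal{P}(H(M,\beta))}$. By Proposition~\ref{p AyupovKudaybergenov sigma complete additivity ternary}, $\Delta|_{\mathcal{P}(H(M,\beta))}$ is weak$^*$-completely additive, and since a normal state is weak$^*$-continuous on bounded sets, $\phi\circ\Delta|_{\mathcal{P}(H(M,\beta))}$ is a completely additive complex measure on $\mathcal{P}(H(M,\beta))$; Theorem~\ref{t Jordan Dorofeev-Sherstnev} then shows it is bounded. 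Because every normal functional on $H(M,\beta)$ is a linear combination of finitely many normal states, it follows that for each $\psi\in H(M,\beta)_*$ the set $\{|\psi(\Delta(p))|:p\in\mathcal{P}(H(M,\beta))\}$ is bounded; regarding each $\Delta(p)$ as an element of $(H(M,\beta)_*)^*=H(M,\beta)$ and applying the uniform boundedness principle, I conclude that $\sup\{\|\Delta(p)\|:p\in\mathcal{P}(H(M,\beta))\}<\infty$, so $\Delta|_{\mathcal{P}(H(M,\beta))}$ is a bounded finitely additive measure.

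Next I would invoke the Jordan Bunce--Wright--Mackey--Gleason theorem. Theorem~\ref{t Jordan Mackey-Gleason} extends each scalar measure $\psi\circ\Delta|_{\mathcal{P}(H(M,\beta))}$ ($\psi\in H(M,\beta)_*$) to a functional, which in the completely additive case can be taken in $H(M,\beta)_*$; assembling these extensions in the standard way yields a bounded linear operator $G:H(M,\beta)\to H(M,\beta)$ with $G(p)=\Delta(p)$ for every projection $p$. The main point requiring care here is precisely this passage from the scalar statement of Theorem~\ref{t Jordan Mackey-Gleason} to its operator-valued form, but this is routine once the boundedness and complete additivity of $\Delta|_{\mathcal{P}(H(M,\beta))}$ established in the previous paragraph are in hand; everything genuinely hard has already been absorbed into Theorem~\ref{t Jordan Dorofeev-Sherstnev} and Proposition~\ref{p AyupovKudaybergenov sigma complete additivity ternary}.

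Finally, fix a self-adjoint $z\in H(M,\beta)$ and let $\mathcal{W}^*(z)=\{z\}''$ be the abelian von Neumann subalgebra of $M$ it generates, which is also a JBW$^*$-subalgebra of $H(M,\beta)$. By Lemma~\ref{l 2.2, 2.3 and 2.6 in one}$(c)$ the restriction $\Delta|_{\mathcal{W}^*(z)}$ is linear and bounded (explicitly $x\mapsto[a_z,x]+b_z\circ x$), and $G|_{\mathcal{W}^*(z)}$ is linear and bounded by construction; since the two operators agree on the projections of $\mathcal{W}^*(z)$, and every self-adjoint element of $\mathcal{W}^*(z)$ is a norm-limit of finite real-linear combinations of mutually orthogonal projections, they coincide on all of $\mathcal{W}^*(z)$, whence $\Delta(z)=G(z)$. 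As $z$ ranges over $H(M,\beta)_{sa}$ this gives $\Delta(z)=G(z)$ throughout $H(M,\beta)_{sa}$, and since $G$ is linear we get, for all $z_1,z_2\in H(M,\beta)_{sa}$, $\Delta(z_1+z_2)=G(z_1+z_2)=G(z_1)+G(z_2)=\Delta(z_1)+\Delta(z_2)$, which is the desired additivity.
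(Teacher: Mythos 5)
Your argument is correct and follows essentially the same route as the paper: complete additivity of $\phi\circ\Delta|_{\mathcal{P}(H(M,\beta))}$ from Proposition~\ref{p AyupovKudaybergenov sigma complete additivity ternary}, boundedness from Theorem~\ref{t Jordan Dorofeev-Sherstnev} plus the uniform boundedness principle, the Jordan Bunce--Wright--Mackey--Gleason theorem to produce the linear operator $G$, and then agreement of $\Delta$ with $G$ on each $\mathcal{W}^*(z)$ via Lemma~\ref{l 2.2, 2.3 and 2.6 in one}$(c)$ and spectral approximation. The one step you flag as needing care --- passing from the scalar statement of Theorem~\ref{t Jordan Mackey-Gleason} to the operator-valued extension $G$ --- is treated with the same brevity in the paper itself.
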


\begin{lemma}\label{l 2.15 Jordan} Let $\Delta: H(M,\beta)\to H(M,\beta)$ be a {\rm(}not necessarily linear nor continuous{\rm)} 2-local triple derivation, where $M$ is a continuous von Neumann algebra and $\beta: M\to M$ is a $\mathbb{C}$-linear $^*$-involution. Suppose $\Delta(\mathbf{1})=0.$ Then there exists a skew-hermitian element $a\in M$ such that $\beta (a) = -a$, and $\Delta(x)=[a, x],$ for all $x\in H(M,\beta)_{sa}.$
\end{lemma}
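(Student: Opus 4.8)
The plan is to combine the additivity result from Proposition~\ref{p addit no type I_n Jordan} with the local description of $\Delta$ on each singly-generated subalgebra given by Lemma~\ref{l 2.2, 2.3 and 2.6 in one}$(c)$, and then promote the (a priori $z$-dependent) implementing element to a single global one. First I would note that since $\Delta(\mathbf{1})=0$, parts $(a)$ and $(b)$ of Lemma~\ref{l 2.2, 2.3 and 2.6 in one} apply: $\Delta$ maps $H(M,\beta)_{sa}$ into itself, and for each pair $x,y\in H(M,\beta)_{sa}$ there is a skew-hermitian $a_{x,y}\in M$ with $\beta(a_{x,y})=-a_{x,y}$ such that $\Delta(x)=[a_{x,y},x]$ and $\Delta(y)=[a_{x,y},y]$. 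By Proposition~\ref{p addit no type I_n Jordan} the map $\Delta|_{H(M,\beta)_{sa}}$ is additive; being homogeneous (2-local triple derivations are homogeneous, as recalled in section~\ref{section3}), it is in fact $\mathbb{R}$-linear on $H(M,\beta)_{sa}$.

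Next I would show that this linear map is a derivation-like object that is spatially implemented. For each fixed self-adjoint $x$, choose via Lemma~\ref{l 2.2, 2.3 and 2.6 in one}$(c)$ skew-hermitian $a_x,b_x\in M$ with $\beta(a_x)=-a_x$, $\beta(b_x)=b_x$ and $\Delta(y)=[a_x,y]+b_x\circ y$ for all $y\in\mathcal{W}^*(x)$; applying this with $y=\mathbf{1}$ gives $0=\Delta(\mathbf{1})=b_x\circ\mathbf{1}=b_x$, so in fact $\Delta(y)=[a_x,y]$ for all $y$ in $\mathcal{W}^*(x)$, and in particular $\Delta(x)=[a_x,x]$. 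Thus the additive (hence linear) map $D:=\Delta|_{H(M,\beta)_{sa}}$ satisfies: for every $x$ there is a skew-hermitian $\beta$-anti-fixed $a_x$ with $\Delta(x)=[a_x,x]$. I would then argue that $D$ is a Jordan derivation of $H(M,\beta)_{sa}$: for $x,y$ self-adjoint, pick $a_{x,y}$ as above and use the linearity of $D$ together with $D(x)=[a_{x,y},x]$, $D(y)=[a_{x,y},y]$; since $x\mapsto[a_{x,y},x]$ is a genuine derivation, evaluating at $x+y$, $x$, $y$ and at the Jordan square $x\circ y$ (which lies in the linear span where the formula for this particular $a_{x,y}$ may not directly apply — here one must instead use that $D$ is linear and that for the pair $\{x\circ y + \text{stuff}, \cdot\}$ one again has an implementing element) shows $D(x\circ y)=D(x)\circ y+x\circ D(y)$. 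More cleanly: once $D$ is linear, the 2-local hypothesis applied to the pairs $(x,x\circ y)$ etc., combined with the fact that every triple derivation $\delta$ with $\delta(\mathbf{1})=0$ on $H(M,\beta)$ restricts to a Jordan derivation on $H(M,\beta)_{sa}$ (this is exactly the content of \cite[Lemma 2]{HoMarPeRu} used in the discussion preceding Proposition~\ref{structure of triple derivations on continuous type 2}), yields that $D$ is a Jordan derivation.

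Finally I would invoke the structure theory for Jordan derivations of continuous JBW-algebras: by Theorem~3.5 in \cite{Upmeier80} (used already in the paragraph before Proposition~\ref{structure of triple derivations on continuous type 2}), $D$ is an inner derivation of $H(M,\beta)_{sa}$, so there exist $a_1,\dots,a_m,b_1,\dots,b_m\in H(M,\beta)_{sa}$ with $D=\sum_j[M_{a_j},M_{b_j}]=[\,\sum_j\tfrac14(a_jb_j-b_ja_j),\,\cdot\,]$ on $H(M,\beta)_{sa}$; setting $a=\sum_j\tfrac14(a_jb_j-b_ja_j)\in M$ one checks, exactly as in that paragraph, that $a^*=-a$ and $\beta(a)=-a$ (because each $a_j,b_j$ is fixed by $\beta$ and self-adjoint), and $\Delta(x)=D(x)=[a,x]$ for all $x\in H(M,\beta)_{sa}$, which is the assertion. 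The step I expect to be the main obstacle is verifying that the linear map $D$ is a Jordan derivation rather than merely a linear map each of whose values is a commutator: the implementing element $a_{x,y}$ depends on the pair, so one cannot differentiate a product formula naively, and one has to feed carefully chosen pairs (such as $(x\circ y,\,x)$ and $(x\circ y,\,y)$) into the 2-local hypothesis and exploit linearity of $D$ to cancel the dependence — this is the one place where the argument is not a pure formality, though it parallels the corresponding step in \cite[Lemma 2.15]{KOPR2014}.
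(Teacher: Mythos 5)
Your overall architecture (additivity from Proposition~\ref{p addit no type I_n Jordan} plus homogeneity gives real-linearity of $D=\Delta|_{H(M,\beta)_{sa}}$; show $D$ is a Jordan derivation; invoke Upmeier's theorem to write $D=[a,\cdot]$ with $a^*=-a$, $\beta(a)=-a$) coincides with the paper's, and the first and last steps are correct, including your observation that $b_z=\Delta(\mathbf{1})=0$ in Lemma~\ref{l 2.2, 2.3 and 2.6 in one}$(c)$. The genuine gap is exactly where you flag it: neither of your two proposed routes to the identity $D(x\circ y)=D(x)\circ y+x\circ D(y)$ actually closes. Using the pair $(x,y)$, the implementing element $a_{x,y}$ is only known to compute $\Delta$ at the two points $x$ and $y$, not at $x\circ y$, so you cannot differentiate a product formula. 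Using the pair $(x,x\circ y)$, the triple derivation $\delta$ you obtain satisfies $\delta(x)=\Delta(x)$ and $\delta(x\circ y)=\Delta(x\circ y)$, but nothing forces $\delta(\mathbf{1})=0$ or $\delta(y)=\Delta(y)$; writing $\delta=[a,\cdot]+b\circ(\cdot)$ as in Proposition~\ref{structure of triple derivations on continuous type 2}, the expansion of $\delta(x\circ y)$ involves $[a,y]$ and $b$, neither of which is controlled by $\Delta$, so the pair-dependence does not cancel.

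The missing idea is to test $\Delta$ on the single pair $(x,x^2)$ --- equivalently, to push your own observation one step further: since $\Delta=[a_x,\cdot]$ on all of $\mathcal{W}^\ast(x)$ and $x^2\in\mathcal{W}^\ast(x)$, one gets $\Delta(x^2)=[a_x,x^2]=[a_x,x]x+x[a_x,x]=2\,\Delta(x)\circ x$ for every self-adjoint $x$. Once $D$ is known to be real-linear, replacing $x$ by $x+y$ in this quadratic identity and subtracting the identities for $x$ and for $y$ (polarization) yields $D(x\circ y)=D(x)\circ y+x\circ D(y)$, i.e.\ $D$ is a Jordan derivation. This is precisely the paper's argument (equation \eqref{jord} there); the paper then extends $D$ complex-linearly to a Jordan $^*$-derivation of $H(M,\beta)$ and applies Proposition~\ref{structure of triple derivations on continuous type 2}, whereas you apply Upmeier's theorem directly to $D$ on $H(M,\beta)_{sa}$ --- these amount to the same thing. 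With this one identity and the polarization step inserted, your proof is complete.
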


\begin{proof}
Let $x\in M_{sa}.$ By Lemma~\ref{l 2.2, 2.3 and 2.6 in one}$(c)$
there exist a skew-hermitian element $a_{x,x^2}\in M$ such that
$\beta(a_{x,x^2}) =-a_{x,x^2}$, and $\Delta(x)=[a_{x,x^2},x],\,
\Delta(x^2)=[a_{x,x^2}, x^2].$

Thus, \begin{equation}\label{jord}\Delta
(x^2)=[a_{x,x^2},x^2]=[a_{x,x^2},x]x+x[a_{x,x^2},x]=2
\Delta(x)\circ x.
\end{equation}

By Proposition~\ref{p addit no type I_n Jordan} and Lemma~\ref{l
2.2, 2.3 and 2.6 in one}$(a)$, $\Delta|_{H(M,\beta)_{sa}} :
H(M,\beta)_{sa} \to H(M,\beta)_{sa}$ is a real linear mapping.
Now, we consider the linear extension $\hat{\Delta}$ of
$\Delta|_{H(M,\beta)_{sa}}$ to $H(M,\beta)$ defined by
$$ \hat{\Delta}(x_1+ix_2)=T(x_1)+i T(x_2),\, x_1, x_2\in H(M,\beta)_{sa}.$$

Taking into account the homogeneity of $\Delta,$
Proposition~\ref{p addit no type I_n Jordan} and the
identity~\eqref{jord}, we deduce that $\hat{\Delta}$ is a Jordan
$^*$-derivation (and hence, a triple derivation) on $H(M,\beta).$
Proposition \ref{structure of triple derivations on continuous
type 2} implies the existence of a skew-symmetric element $a\in M$
such that $\beta(a)=-a$ and $\hat{\Delta}(x)=[a,x]$ for all $x\in
H(M,\beta).$ In particular, $\Delta(x)=[a, x]$ for all $x\in
H(M,\beta)_{sa}$, which completes the proof. \end{proof}

We now prove the main result of this section.
\begin{theorem}\label{t continuous type 2} Let $\Delta: H(M,\beta)\to H(M,\beta)$ be a {\rm(}not necessarily linear nor continuous{\rm)} 2-local triple derivation, where $M$ is a continuous von Neumann algebra and $\beta: M\to M$ is a $\mathbb{C}$-linear $^*$-involution.  Then $\Delta$ is a linear and continuous triple derivation.
\end{theorem}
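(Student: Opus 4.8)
The plan is to reduce the statement to the additivity of $\Delta$ on all of $H(M,\beta)$ and then invoke automatic continuity; the core will be to pass from additivity on the self-adjoint part to additivity everywhere.

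First I would normalise. By \eqref{equ delta 1 is skew} the element $\Delta(\mathbf{1})$ is skew-hermitian and $M_{\Delta(\mathbf{1})}=\delta\!\left(\tfrac12\Delta(\mathbf{1}),\mathbf{1}\right)$ is a genuine triple derivation on $H(M,\beta)$. Since the difference of a 2-local triple derivation and a triple derivation is again a 2-local triple derivation, I may replace $\Delta$ by $\Delta-M_{\Delta(\mathbf{1})}$ and assume $\Delta(\mathbf{1})=0$. Lemma~\ref{l 2.15 Jordan} (which rests on the additivity of $\Delta$ on $H(M,\beta)_{sa}$, Proposition~\ref{p addit no type I_n Jordan}) then produces a skew-hermitian $a\in M$ with $\beta(a)=-a$ and $\Delta(x)=[a,x]$ for every $x\in H(M,\beta)_{sa}$. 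The map $D_a\colon x\mapsto[a,x]$ is a bounded triple derivation on $H(M,\beta)$ (a direct check: $[a,\cdot]$ is an inner $^*$-derivation of $M$ and $\beta(a)=-a$ makes it preserve $H(M,\beta)$; cf.\ Proposition~\ref{structure of triple derivations on continuous type 2}), so $\Psi:=\Delta-D_a$ is a 2-local triple derivation with $\Psi(\mathbf{1})=0$ and $\Psi|_{H(M,\beta)_{sa}}\equiv0$. Proving $\Psi\equiv0$ then shows that the normalised $\Delta$ equals $D_a$, so that the original $\Delta$ is the sum of the triple derivations $D_a$ and $M_{\Delta(\mathbf{1})}$, hence a bounded linear triple derivation; one may also conclude from linearity and homogeneity via \cite[Theorem 2.8]{BurPolPerBLMS14}.

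Since every element of $H(M,\beta)$ has the form $h+ik$ with $h,k\in H(M,\beta)_{sa}$, it remains to prove $\Psi(x)=0$ for a fixed such $x=h+ik$. The elementary fact used repeatedly is that, for $w\in M$ skew-hermitian and $t\in M$ self-adjoint, $[w,t]$ is self-adjoint whereas $w\circ t$ is skew-adjoint. Applying 2-locality of $\Psi$ to the pair $(x,h)$ gives a triple derivation $D$ with $\Psi(x)=D(x)$ and $D(h)=\Psi(h)=0$; writing $D(y)=[c,y]+d\circ y$ with $c,d$ skew-hermitian, $\beta(c)=-c$ and $\beta(d)=d=D(\mathbf{1})$ (Proposition~\ref{structure of triple derivations on continuous type 2}), the identity $0=D(h)=[c,h]+d\circ h$ splits into its self-adjoint and skew-adjoint parts to give $[c,h]=0$ and $d\circ h=0$, whence
\[
\Psi(x)=D(x)=D(h)+i\,D(k)=i\bigl([c,k]+d\circ k\bigr).
\]
Symmetrically, the pair $(x,k)$ yields a triple derivation $D'$ with $[c',k]=0$, $d'\circ k=0$ and $\Psi(x)=D'(h)=[c',h]+d'\circ h$, and the pair $(x,\mathbf{1})$ yields — since $\Psi(\mathbf{1})=0$ forces $D''(\mathbf{1})=0$, so $D''(y)=[c'',y]$ — a skew-hermitian $c''$ with $\beta(c'')=-c''$ and $\Psi(x)=[c'',x]=[c'',h]+i\,[c'',k]$. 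Separating self-adjoint from skew-adjoint parts in the three descriptions of $\Psi(x)$ gives $[c,k]=[c'',k]$, $[c',h]=[c'',h]$, $d'\circ h=i\,[c'',k]$ and $i\,d\circ k=[c'',h]$, so that $\Psi(x)=0$ is equivalent to the two relations $[c,k]=0$ and $d\circ k=0$.

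The main obstacle is precisely this final implication. Two-locality delivers the commutation relation $[c,h]=0$ and the anticommutation relation $d\circ h=0$ only relative to the component $h$, and one must upgrade them to the component $k$. I would do this following the proof of \cite[Theorem 2.14]{KOPR2014}, which combines further applications of 2-locality — to pairs such as $(x,h+\lambda k)$, $\lambda\in\mathbb{R}$ — with the structure of triple derivations, and which transfers to the present Jordan setting once Lemma~\ref{l 2.2, 2.3 and 2.6 in one} is used in place of the corresponding lemmas of \cite{KOPR2014} and Proposition~\ref{structure of triple derivations on continuous type 2} in place of the description of triple derivations on a von Neumann algebra. The point that needs genuine care, and the reason this case is harder than the treatment of $pM$ in Section~\ref{section3}, is the Jordan term $d\circ\,\cdot$ in Proposition~\ref{structure of triple derivations on continuous type 2}: it has no counterpart in the purely associative manipulations available for right ideals of von Neumann algebras, and it is what must be controlled in order to force $[c,k]+d\circ k=0$.
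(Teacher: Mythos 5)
Your reduction---normalising so that $\Delta(\mathbf{1})=0$, invoking Lemma~\ref{l 2.15 Jordan} to subtract the inner derivation $[a,\cdot]$, and reducing everything to showing that a 2-local triple derivation $\Psi$ with $\Psi|_{H(M,\beta)_{sa}}\equiv 0$ vanishes identically---coincides with the paper's proof up to that point, and your bookkeeping with the pairs $(x,h)$, $(x,k)$, $(x,\mathbf{1})$ correctly isolates what remains to be proved, namely $[c,k]=0$ and $d\circ k=0$. But the argument stops exactly there: you name the obstacle and defer to ``further applications of 2-locality to pairs such as $(x,h+\lambda k)$'' without carrying this out, and that route does not obviously close the gap, since for real $\lambda$ the element $h+\lambda k$ is merely self-adjoint and nothing about it forces the Jordan coefficient of the corresponding triple derivation to vanish. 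This is a genuine missing idea, not a routine verification.

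The paper's device is the one new ingredient of this step: after using homogeneity of $\Psi$ to assume $\|x_2\|<1$ (writing $x=x_1+ix_2$), one applies 2-locality to the pair $(x,y)$ with $y=\mathbf{1}+x_2$, which is positive and \emph{invertible}. Then $\Psi(y)=0$ gives $[a_{x,y},y]+b_{x,y}\circ y=0$; separating self-adjoint and skew-adjoint parts (\cite[Lemma 2.4]{KOPR2014}) yields $[a_{x,y},y]=0$ and $ib_{x,y}\circ y=0$, and the positivity and invertibility of $y$ force $b_{x,y}=0$ by \cite[Lemma 2.5]{KOPR2014}---this is precisely how the Jordan term you identify as the difficulty gets controlled. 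Since $[a_{x,y},\mathbf{1}+x_2]=0$ gives $[a_{x,y},x_2]=0$, one obtains $\Psi(x)=[a_{x,y},x_1]$, which is self-adjoint; applying the same conclusion to $ix$ and using homogeneity shows that $\Psi(x)$ is also skew-adjoint, hence zero. Without this choice of second point in the pair (or an effective substitute for it), your proof is incomplete at its decisive step.
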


\begin{proof} From \eqref{equ delta 1 is skew} we know that $\Delta(\mathbf{1})^* = -\Delta(\mathbf{1}),$ and $M_{\Delta(\mathbf{1})} = \delta\left(\frac12 \Delta(\mathbf{1}),\mathbf{1}\right)$ is a triple derivation. Replacing $\Delta$ with $\Delta-\delta\left(\frac12 \Delta(\mathbf{1}),\mathbf{1}\right)$ we can assume that $\Delta(\mathbf{1})=0$. By Lemma \ref{l 2.15 Jordan} there exists a skew-hermitian element $a\in M$ such that $\beta (a) = -a$, and $\Delta(x)=[a, x],$ for all $x\in H(M,\beta)_{sa}.$ Observe that the mapping $\widehat{\Delta} = \Delta-[a, .]$ is a 2-local triple derivation on $H(M,\beta)_{sa}$ satisfying $\widehat{\Delta}|_{H(M,\beta)_{sa}} \equiv 0.$\smallskip

We shall finally prove that $\widehat{\Delta} =0.$ This result
follows from a direct adaptation of the arguments in \cite[Lemma
2.16]{KOPR2014}, we include here a sketch of the proof for
completeness reasons.

Let $x\in H(M,\beta)$ be an arbitrary element and let
$x=x_1+ix_2,$ where $x_1, x_2\in H(M,\beta)_{sa}.$ Since
$\widehat{\Delta}$ is homogeneous, by passing to the element
$(\mathbf{1}+\|x_2\|)^{-1} x$ if necessary, we can suppose that
$\|x_2\|<1.$ In this case the element $y=\mathbf{1}+x_2$ is
positive and invertible. Take skew-hermitian elements $a_{x,y},
b_{x,y}\in M$ such that $\beta (a_{x,y}) = -a_{x,y}$, $\beta
(b_{x,y}) = b_{x,y},$ and
$$\widehat{\Delta}(x)=[a_{x,y},x]+b_{x,y}\circ x, \hbox{ and } \widehat{\Delta}(y)=[a_{x,y},y]+b_{x,y}\circ y.$$
Since $\widehat{\Delta}(y)=0,$ we get $[a_{x,y},y]+b_{x,y}\circ
y=0.$ Lemma 2.4 in \cite{KOPR2014} implies that $[a_{x,y}, y]=0$
and $ib_{x,y} \circ y=0.$ Having in mind that $y$ is positive and
invertible, and that $ib_{x,y}$ is hermitian, \cite[Lemma
2.5]{KOPR2014} proves that $b_{x,y}=0.$\smallskip

The condition $0=[a_{x,y},y]=[a_{x,y}, \mathbf{1}+x_2]=[a_{x,y},
x_2],$ implies $$\widehat{\Delta} (x)=[a_{x,y},x]+b_{x,y}\circ
x=[a_{x,y}, x_1+ix_2]=[a_{x,y}, x_1],$$ which shows that
$$
\widehat{\Delta}(x)^\ast=[a_{x,y}, x_1]^\ast=[x_1,
a_{x,y}^\ast]=[x_1, -a_{x,y}]=[a_{x,y},
x_1]=\widehat{\Delta}(x).$$ The arbitrariness of $x\in H(M,\beta)$
implies that $\widehat{\Delta}(x)=0$, as desired.
\end{proof}

Since every element in a closed ideal of a JB$^*$-triple can be written as a cube of an element in that ideal, it is clear that a triple derivation leaves closed ideals invariant.  Hence the same is true for 2-local triple derivations.  Thus, by invoking the structure theorem of continuous JBW$^*$-triples stated at the beginning of this section, and combining Theorems~\ref{triple-rectangular} and \ref{t continuous type 2}, we obtain the second main result of this paper.
\begin{theorem}\label{continuous} Let $\Delta: A\to A$ be a {\rm(}not necessarily linear nor continuous{\rm)} 2-local triple derivation, where $A$ is a continuous JBW$^*$-triple.  Then $\Delta$ is a linear and continuous triple derivation.
\end{theorem}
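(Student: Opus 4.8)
The plan is to reduce Theorem~\ref{continuous} to the two special cases already established. By the classification of continuous JBW$^*$-triples (see \cite[(1.20)]{Horn88} and the discussion at the beginning of Section~\ref{section5}), $A$ is, up to a surjective linear isometry, a JW$^*$-triple admitting a unique decomposition $A = H(W,\alpha)\oplus pV$ into weak$^*$-closed ideals, where $W$ and $V$ are continuous von Neumann algebras, $p$ is a projection in $V$, and $\alpha$ is a $\mathbb{C}$-linear $^*$-involution on $W$. A surjective linear isometry between JB$^*$-triples is a triple isomorphism (\cite[Proposition (5.5)]{kaup}), and such an isomorphism transports 2-local triple derivations to 2-local triple derivations; so there is no loss of generality in assuming $A = H(W,\alpha)\oplus pV$. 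Write $I_1 := H(W,\alpha)$ and $I_2 := pV$.

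First I would show that $\Delta$ leaves each $I_j$ invariant. As recalled immediately before the statement, every element of a closed ideal $I$ of a JB$^*$-triple can be written as the cube of an element of $I$, so any genuine triple derivation of $A$ maps $I$ into $I$. Given $x\in I_j$, a triple derivation $D_{x,x}$ of $A$ with $\Delta(x) = D_{x,x}(x)$ then yields $\Delta(x) = D_{x,x}(x)\in I_j$, whence $\Delta(I_j)\subseteq I_j$. Moreover, for $x,y\in I_j$ a triple derivation $D_{x,y}$ of $A$ witnessing the 2-locality of $\Delta$ at $x,y$ restricts to a triple derivation of the subtriple $I_j$, so the restriction $\Delta_j := \Delta|_{I_j}$ is itself a 2-local triple derivation on $I_j$.

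Next, I would invoke the two main results: by Theorem~\ref{triple-rectangular} the map $\Delta_2$ is a triple derivation on $pV$, and by Theorem~\ref{t continuous type 2} the map $\Delta_1$ is a triple derivation on $H(W,\alpha)$; in particular each $\Delta_j$ is additive. It remains to glue the two pieces. Let $P_1,P_2$ be the canonical projections of $A$ onto $I_1,I_2$, and write $x = x_1+x_2$ with $x_j\in I_j$. Choosing triple derivations $D_{x,x_1}$ and $D_{x,x_2}$ of $A$ with $\Delta(x) = D_{x,x_1}(x) = D_{x,x_2}(x)$, $\Delta(x_1)=D_{x,x_1}(x_1)$ and $\Delta(x_2)=D_{x,x_2}(x_2)$, and using that both of these triple derivations preserve $I_1$ and $I_2$, I obtain $P_1\Delta(x) = D_{x,x_1}(x_1) = \Delta(x_1)$ and $P_2\Delta(x) = D_{x,x_2}(x_2) = \Delta(x_2)$, hence $\Delta(x) = \Delta(x_1)+\Delta(x_2)$. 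Together with the additivity of $\Delta_1$ and $\Delta_2$ this gives additivity of $\Delta$ on all of $A$; combined with the homogeneity of every 2-local triple derivation (noted at the start of Section~\ref{section3}), $\Delta$ is linear. Finally, a linear 2-local triple derivation is in particular a linear local triple derivation on the JB$^*$-triple $A$, so \cite[Theorem 2.8]{BurPolPerBLMS14} shows that $\Delta$ is a continuous triple derivation, which is the assertion of Theorem~\ref{continuous}.

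I do not expect a serious obstacle at this stage: the genuinely hard analytic work is entirely contained in Theorems~\ref{triple-rectangular} and \ref{t continuous type 2}. The only points needing care are the invariance of the two ideals under the a priori non-linear map $\Delta$ and the additivity of $\Delta$ across the direct summands, and both follow routinely from the ideal-invariance of genuine triple derivations recalled above.
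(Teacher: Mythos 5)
Your proposal is correct and follows essentially the same route as the paper, whose own proof is just a one‑sentence appeal to the decomposition $A = H(W,\alpha)\oplus pV$, the ideal‑invariance of (2‑local) triple derivations via the cube trick, and Theorems~\ref{triple-rectangular} and \ref{t continuous type 2}. Your write‑up merely makes explicit the details the paper leaves implicit (that the restrictions are 2‑local triple derivations on the summands and that additivity glues across the direct sum), and these details are all sound.
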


\begin{problem}\label{5.10}%4.8
Does Theorem~\ref{t Jordan Dorofeev-Sherstnev} remain valid when $H(M,\beta)$ is replaced by an arbitrary JBW$^*$-algebra without summands of type $I_n$?
\end{problem}

\begin{problem}\label{5.9}
Is Theorem~\ref{continuous} valid for
\begin{description}
\item[(a)] JBW$^*$-triples of type I? {\rm (See Corollary~\ref{Cartan})}
\item[(b)]  reversible JBW$^*$-algebras?
\item[(c)] 2-local triple derivations with values in a Jordan triple module?
\item[(d)] 2-local triple derivations on various algebras of measurable operators?
\item[(e)] real JBW$^*$-triples?
\item[(f)] complex and real JB$^*$-triples?
\end{description}
\end{problem}

\end{document}